\definecolor{dg}{rgb}{0.01, 0.75, 0.24}
\numberwithin{equation}{section}
\theoremstyle{plain} 
\newtheorem{thm}{Theorem}[section]
\newtheorem{lem}[thm]{Lemma} 
\newtheorem{prop}[thm]{Proposition} 
\newtheorem{rmk}[thm]{Remark}
\def\io{\int_{\Omega}}
\def\ibtt{\int_{B_{t'}}}
\def\ibt {\int_{B_{t}}}
\def\ibR {\int_{B_{R}}}
\def\R{\mathbb{R}}
\def\N{\mathbb{N}}
\def\A{\mathcal{A}}
\def\ib0 {\int_{B_{R/4}(0)}}
\def\ibk {\int_{E_k}}
\begin{document}

\title{ Higher fractional differentiability for solutions\\ to a class of obstacle problems \\with non-standard growth conditions}
\author {   
\sc{Antonio Giuseppe Grimaldi}\thanks{
 Dipartimento di Matematica e Applicazioni "R. Caccioppoli", Università degli Studi di Napoli "Federico II", Via Cintia, 80126 Napoli
 (Italy). E-mail: \textit{antoniogiuseppe.grimaldi@unina.it}}  \;\textrm{and}   Erica Ipocoana \thanks{University of Modena and Reggio Emilia, Dipartimento di Scienze Fisiche, Informatiche e Matematiche, via Campi 213/b, I-41125 Modena (Italy).
E-mail: \textit{erica.ipocoana@unipr.it}}}

\maketitle
\maketitle
\begin{abstract}
We here establish the higher fractional differentiability for solutions to a class of obstacle problems with non-standard growth conditions. We deal with
the case in which the solutions to the obstacle problems satisfy a variational inequality of the form
\begin{equation*}
\displaystyle\int_{\Omega} \langle \mathcal{A}(x,Du)  ,D(\varphi-u) \rangle dx \geq 0 \qquad \forall \varphi \in \mathcal{K}_\psi(\Omega),
\end{equation*}
where $\Omega$ is a bounded open subset of $\R^n$, $\psi \in W^{1,p}(\Omega)$ is a fixed function called \textit{obstacle} and $\mathcal{K}_{\psi}(\Omega)= \{ w \in W^{1,p}(\Omega) : w \geq \psi \ \text{a.e. in} \ \Omega  \}$ is the class of admissible functions. Assuming that the gradient of the obstacle belongs to some suitable Besov space, we are able to prove that some fractional differentiability property transfers to the gradient of the solution.
\end{abstract}

\medskip
\noindent \textbf{Keywords:} Besov space, higher differentiability, non-standard growth conditions, obstacle problems, variational inequality \medskip \\
\medskip
\noindent \textbf{MSC 2020:} 35J87, 47J20, 49J40.

\section{Introduction}
The aim of this paper is the study of the higher fractional differentiability properties of the gradient of solutions $u \in W^{1,p}(\Omega)$ to obstacle problems of the form
\begin{gather}\label{!1}
\min \biggl\{ \displaystyle\int_{\Omega} F(x,Dw)dx \ : \ w \in \mathcal{K}_{\psi}(\Omega)  \biggr\},
\end{gather}
where $\Omega$ is a bounded open set of $\mathbb{R}^n$, $n \geq 2$.
\\The function $\psi: \Omega \rightarrow [-\infty, + \infty)$, called \textit{obstacle}, belongs to the Sobolev class $W^{1,p}(\Omega)$ and the class $\mathcal{K}_{\psi}(\Omega) $ is defined as follows
\begin{gather}\label{!2}
\mathcal{K}_{\psi}(\Omega)= \{ w \in W^{1,p}(\Omega) : w \geq \psi \ \text{a.e. in} \ \Omega  \}.
\end{gather}
Note that the set $\mathcal{K}_{\psi}(\Omega) $ is not empty since $\psi \in \mathcal{K}_{\psi}(\Omega) $.
\\In what follows, we assume that $F : \Omega \times \mathbb{R}^n \rightarrow [0, + \infty)$ is a Carath\'{e}odory function such that there exists a function $\tilde{F} : \Omega \times [0, + \infty) \rightarrow [0, + \infty)$ satisfying the following equality
$$ F(x, \xi)= \tilde{F}(x, |\xi|)\eqno{\rm{{ (F1)}}}$$
for a.e. $x \in \Omega$ and every $\xi \in \mathbb{R}^n$.
\\Moreover, we also assume that there exist positive constants $\tilde{\nu}$, $\tilde{L}$, $\tilde{l}$, exponents $2 \leq p < q < + \infty$ and a parameter $\mu \in [0,1]$, that will allow us to consider in our analysis both the degenerate and the non-degenerate situation, such that the following assumptions are satisfied:
$$ \dfrac{1}{\tilde{l}} (|\xi|^2-\mu^2 )^{\frac{p}{2}} \leq F(x, \xi) \leq \tilde{l}(\mu^2+|\xi|^2)^{\frac{q}{2}}\eqno{\rm{{ (F2)}}}$$
$$  \langle D_{\xi \xi}F(x, \xi) \lambda, \lambda \rangle \geq \tilde{\nu} (\mu^2+ |\xi|^2)^{\frac{p-2}{2}} |\lambda|^2  \eqno{\rm{{ (F3)}}}$$
$$ |D_{\xi \xi }F(x, \xi)| \leq \tilde{L}(\mu^2 + |\xi|^2)^{\frac{q-2}{2}}\eqno{\rm{{ (F4)}}}$$
for a.e. $x,y \in \Omega$ and every $\xi \in \mathbb{R}^n$.
\\Very recently, in \cite{cupini.marcellini.mascolo.passarelli} it has been proved that (F3) and (F4) imply (F2), i.e. if $p<q$, the functional $F$ has non-standard growth conditions of $p,q$-type, as initially defined and studied by Marcellini \cite{marcellini1986,marcellini1989,marcellini1991}.
In recent years there has been a considerable of interest in functionals with $p,q$-growth, see for instance \cite{beck.mingione, bildhauer.fuchs, byun.oh, byun.youn, defilippis.mingione}. Other results that deserve to be quoted are
\cite{carrozza.kristensen.passarelli,cupini.marcellini.mascolo,
eleuteri.marcellini.mascolo1,eleuteri.marcellini.mascolo2, 
eleuteri.marcellini.mascolo3}, for the case of elliptic equations, and 
\cite{bogelein.duzaar.marcellini1, bogelein.duzaar.marcellini2,
bogelein.duzaar.marcellini3,marcellini2019} for the case of parabolic equations. 
\\We remark that assumption (F1) is known in
the literature as Uhlenbeck structure and it was showed in \cite{uh} that it prevents the irregularity phenomenon in problems with non-standard growth.

We say that function $F$ satisfies assumption (F5) if there exist a non-negative function $k \in L^{r}_{\text{loc}}(\Omega)$, with $r > \frac{n}{\alpha}$ and $0 < \alpha <1$, such that
$$ |D_{\xi}F(x, \xi)- D_{\xi}F(y, \xi)| \leq |x-y|^{\alpha}(k(x)+k(y))(\mu^2+ |\xi|^2)^{\frac{q-1}{2}}\eqno{\rm{{ (F5)}}}$$
for a.e. $x,y \in \Omega$ and every $\xi \in \mathbb{R}^n$.
\\On the other hand, we say that assumption (F6) is satisfied if there exists a sequence of measurable non-negative functions $g_k \in L^{r}_{\text{loc}}(\Omega)$ such that
$$\displaystyle\sum_{k=1}^{\infty} \Vert g_k \Vert^{\sigma}_{L^{r}(\Omega)} < \infty,$$
and at the same time
$$|D_{\xi}F(x,\xi)-D_{\xi}F(y, \xi)| \leq |x-y|^{\alpha} (g_k(x)+g_k(y)) (\mu^2 +|\xi|^2)^{\frac{q-1}{2}} \eqno{\rm{{ (F6)}}} $$
for a.e. $x,y \in \Omega$ such that $2^{-k} \text{diam}(\Omega) \leq |x-y| < 2^{-k+1}\text{diam}(\Omega)$ and for every $\xi \in \mathbb{R}^n$.
\\It is worth observing that, in the case of standard growth conditions, i.e. $p=q$, $u \in W^{1,p}(\Omega)$ is a solution to the obstacle problem in $\mathcal{K}_{\psi}(\Omega) $ if, and only if, $u \in \mathcal{K}_{\psi}(\Omega) $ solves the variational inequality 
\begin{align}\label{1}
\displaystyle\int_{\Omega} \langle \mathcal{A}(x,Du)  ,D(\varphi-u) \rangle dx \geq 0 
\end{align}
for all $\varphi \in \mathcal{K}_{\psi}(\Omega)$, where we set
\begin{gather}
\mathcal{A}(x,\xi)= D_{\xi}F(x,\xi).
\end{gather}
This equivalence has been proved successfully in the case non-standard growth conditions by Eleuteri and Passarelli di Napoli in \cite{eleuteri.passarelli2021}.

From contiditions (F2)--(F4), we deduce the existence of positive constants $\nu,L,l$ such that the following $p$-ellipticity and $q$-growth conditions are satisfied by the map $\mathcal{A}$:
$$|\mathcal{A}(x,\xi)| \leq l(\mu^2 +|\xi|^2)^{\frac{q-1}{2}} \eqno{\rm{{ (A1)}}}$$
$$\langle \mathcal{A}(x,\xi)-\mathcal{A}(x,\eta), \xi-\eta \rangle \geq \nu |\xi-\eta|^{2} (\mu^2+|\xi|^{2}+|\eta|^{2})^{\frac{p-2}{2}} \eqno{\rm{{ (A2)}}}$$
$$|\mathcal{A}(x,\xi)-\mathcal{A}(x,\eta)|\leq L|\xi-\eta| (\mu^2+|\xi|^{2}+|\eta|^{2})^{\frac{q-2}{2}} \eqno{\rm{{ (A3)}}}$$
for a.e. $x,y \in \Omega$, for every $\xi, \eta \in \mathbb{R}^n$, where we recall that $0< \alpha< 1$. 
\\Furthermore, if condition (F5) or (F6) holds, then $\A$ satisfies assumptions (A4) or (A5), respectively, that is
$$|\mathcal{A}(x,\xi)-\mathcal{A}(y, \xi)| \leq |x-y|^{\alpha} (k(x)+k(y)) (\mu^2 +|\xi|^2)^{\frac{q-1}{2}} \eqno{\rm{{ (A4)}}}$$
for a.e. $x,y \in \Omega$ and every $\xi \in \mathbb{R}^n$, or
$$|\A(x,\xi)-\A(y, \xi)| \leq |x-y|^{\alpha} (g_k(x)+g_k(y)) (\mu^2 +|\xi|^2)^{\frac{q-1}{2}} \eqno{\rm{{ (A5)}}}$$
for a.e. $x,y \in \Omega$ such that $2^{-k} \text{diam}(\Omega) \leq |x-y| < 2^{-k+1}\text{diam}(\Omega)$ and for every $\xi \in \mathbb{R}^n$.

The obstacle problem appeared in the mathematical literature in the work of Stampacchia \cite{stampacchia} in the special case $\psi = \chi_E$ and related to the capacity of a subset $E \Subset \Omega$; in an earlier independent work, Fichera \cite{fichera} solved the first unilateral problem, the so-called \textit{Signorini problem} in elastostatics.

It is usually observed that the regularity of solutions to the obstacle problems is influenced by the one of the obstacle; for example, for linear obstacle problems, obstacle and solutions have the same regularity \cite{brezis.kinderlehrer,caffarelli.kinderlehrer,kinderlehrer.stampacchia}. This does not apply in the nonlinear setting, hence along the years, there have been intense research activities for the regularity of the obstacle problem in this direction.

In the case of standard growth conditions, Eleuteri and Passarelli di Napoli \cite{eleuteri.passarelli2018} proved that an extra differentiability of integer or fractional order of the gradient of the obstacle transfers to the gradient of the solutions, provided the partial map $x \mapsto \mathcal{A}(x,\xi)$ possesses a suitable differentiability property. 
\\Recently, Gavioli proved in \cite{gavioli1,gavioli2} that the weak differentiability of integer order of the partial map $x \mapsto \mathcal{A}(x, \xi)$ is a sufficient condition to prove that an extra differentiability of integer order of the gradient of the obstacle transfers to the gradient of the solutions to obstacle problems with $p,q$-growth conditions. The intermediate case of higher differentiability in the setting of variable exponents case has been carried out in the paper \cite{foralli.giliberti}. Furthermore, a higher fractional differentiability has been proved for solutions to double phase elliptic obstacle problems in \cite{zhang.zheng}. We remark that double phase elliptic obstacle problems can be obtained as a particular case of a functional satisfying our growth hypotheses, moreover the assumption made in \cite{zhang.zheng} on the coefficients of the operator $\mathcal{A}$ is stronger with respect to ours.
\\Here, we continue the study of the higher differentiablity properties of solutions to \eqref{1} in case of $p,q$-growth conditions. The novelty of this paper consists in assuming that both the gradient of the obstacle and the partial map $x \mapsto \mathcal{A}(x, \xi)$ belong to a suitable Sobolev class of fractional order.

Our analysis comes from the fact that the regularity of the solutions to the obstacle problem \eqref{1} is strictly connected to the analysis of the regularity of the solutions to partial differential equations of the form
$$\text{div} D_{\xi}F(x,Du)= \text{div}D_{\xi}F(x,D\psi),$$ 
whose higher differentiability properties have been widely investigated (see for example 
\cite{baison.clop2017,clop.giova.passarelli,giova,giova.passarelli,passarelli2014,
passarelli2014.potanal,passarelli2015}). We also notice that previous regularity results concerning local minimizers of integral functionals of the Calculus of Variations, under the assumption (A4), have been obtained by Kristensen and Mingione \cite{kristensen.mingione}.\\
In particular, our aim is to extend the higher differentiability results in \cite{eleuteri.passarelli2018} (see Theorems \ref{1thmp=q} and \ref{thmp=q} in Section \ref{secnot}) to the case of functionals with $p,q$--growth.
\begin{thm}\label{mainthm1}
Let $\A(x,\xi)$ satisfy (A1)-(A4) for exponents $2\leq p < q <\frac{n}{\alpha}< r$ such that
\begin{equation}\label{gap}
\dfrac{q}{p} <  1+ \dfrac{ \alpha}{n}- \frac{1}{r} .
\end{equation}
Let $u\in \mathcal{K}_\psi(\Omega)$ be the solution to the obstacle problem \eqref{1}. Then we have
\begin{equation}
D\psi \in B^\gamma_{2q-p,\infty,\textrm{loc}}(\Omega)\Rightarrow (\mu^2 +|Du|^2)^\frac{p-2}{4} Du \in B^\alpha_{2,\infty,\textrm{loc}}(\Omega),
\end{equation}
provided $0< \alpha < \gamma < 1$.
\end{thm}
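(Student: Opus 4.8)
The plan is to run the translation (difference–quotient) method directly on the variational inequality \eqref{1}, in the spirit of \cite{eleuteri.passarelli2018} and of its $(p,q)$--counterpart \cite{gavioli1}. Throughout I write $V_p(\xi):=(\mu^2+|\xi|^2)^{\frac{p-2}{4}}\xi$, so that the thesis reads $V_p(Du)\in B^{\alpha}_{2,\infty,\mathrm{loc}}(\Omega)$, and I shall use two standard facts: for $p\ge2$ one has $|V_p(\xi)-V_p(\eta)|^2\simeq(\mu^2+|\xi|^2+|\eta|^2)^{\frac{p-2}{2}}|\xi-\eta|^2$, and, for $0<s<1$ and $1\le m<\infty$, a function $f$ lies in $B^{s}_{m,\infty,\mathrm{loc}}(\Omega)$ if and only if $\sup_{0<|h|<h_0}|h|^{-s}\,\|\tau_h f\|_{L^m(B')}<+\infty$ for every $B'\Subset\Omega$. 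As a preliminary step — carried out exactly as in the $(p,q)$ literature, exploiting \eqref{gap} and the embedding $D\psi\in B^{\gamma}_{2q-p,\infty,\mathrm{loc}}(\Omega)\hookrightarrow L^{2q-p}_{\mathrm{loc}}(\Omega)$ (sharpened by the positivity of $\gamma$) — one gains a quantitative higher integrability $Du\in L^{s}_{\mathrm{loc}}(\Omega)$ with $s>q$; in fact $s$ can be taken $\ge\frac{(2q-p)r}{r-1}$, which is the exact amount needed to make all the $q$--power integrals below finite and is precisely where \eqref{gap} is consumed. I treat this as known.

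Fix nested balls $B_R\Subset B_{2R}\Subset\Omega$, a cut–off $\eta\in C^{\infty}_c(B_{2R})$ with $0\le\eta\le1$, $\eta\equiv1$ on $B_R$, $|D\eta|\lesssim R^{-1}$, and $0<|h|<\mathrm{dist}(B_{2R},\partial\Omega)$. Since $u-\psi\ge0$ a.e., a direct check shows that $u+\eta^2\tau_h(u-\psi)$ is an admissible competitor for the obstacle problem solved by $u$, and that $u(\cdot+h)-\eta^2\tau_h(u-\psi)$ is an admissible competitor for the obstacle problem with obstacle $\psi(\cdot+h)$ and integrand $F(x+h,\cdot)$, solved by $u(\cdot+h)$, which therefore satisfies (locally) the variational inequality associated with $\mathcal{A}(x+h,Du(x+h))$. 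Writing the two inequalities and subtracting them, one obtains
\[
\int_{\Omega}\bigl\langle\tau_h[\mathcal{A}(\cdot,Du)]\,,\,D\bigl(\eta^2\tau_h(u-\psi)\bigr)\bigr\rangle\,dx\ \le\ 0 .
\]
Splitting $\tau_h[\mathcal{A}(\cdot,Du)]=[\mathcal{A}(x+h,Du(x+h))-\mathcal{A}(x+h,Du(x))]+[\mathcal{A}(x+h,Du(x))-\mathcal{A}(x,Du(x))]$, applying the monotonicity (A2) to the first bracket (whose two arguments are $Du(x+h)$ and $Du(x)$) to produce, via the $V_p$ equivalence, the term $\nu\int\eta^2|\tau_h V_p(Du)|^2$ on the left, and using (A3) to bound the first bracket and (A4) the second from above, one reaches a Caccioppoli–type inequality: $\int_{B_R}|\tau_h V_p(Du)|^2$ is dominated by an \emph{obstacle term} $\int\eta^2(\mu^2+|Du(x+h)|^2+|Du(x)|^2)^{\frac{q-2}{2}}|\tau_h Du|\,|\tau_h D\psi|$, a \emph{cut–off term} with $|\tau_h(u-\psi)|$ and $|D\eta|$ in place of $|\tau_h D\psi|$ and $\eta$, and a \emph{coefficient term} $|h|^{\alpha}\int\bigl(k(x+h)+k(x)\bigr)(\mu^2+|Du(x)|^2)^{\frac{q-1}{2}}\bigl(\eta^2|\tau_h Du|+\eta^2|\tau_h D\psi|+\eta|D\eta|\,|\tau_h(u-\psi)|\bigr)$.

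The core of the argument — and its only real difficulty — is estimating this right–hand side, where the mismatch between the $q$--growth in (A3)--(A4) and the $p$--coercivity in (A2) forces the appearance of $q$--powers of $|Du|$ not controlled by the $p$--energy. In each term one splits off the factor $(\mu^2+|Du(x+h)|^2+|Du(x)|^2)^{\frac{p-2}{4}}|\tau_h Du|$, whose square is $\lesssim|\tau_h V_p(Du)|^2$ and is absorbed into the left–hand side with a small constant, and applies Young's inequality; crucially, in the coefficient term one uses the form $|h|^{\alpha}ab\le\varepsilon a^2+C_\varepsilon|h|^{2\alpha}b^2$ so as to keep the full power $|h|^{2\alpha}$ on the non–absorbed side. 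The remainders are then estimated by Hölder: from the obstacle term one is left with $\int\eta^2(\mu^2+|Du|^2)^{\frac{2q-p-2}{2}}|\tau_h D\psi|^2$, bounded through Hölder with exponents $\tfrac{2q-p}{2q-p-2}$ and $\tfrac{2q-p}{2}$ by $\|\tau_h D\psi\|_{L^{2q-p}(B_{2R})}^2$ times a power of $\|Du\|_{L^{2q-p}(B_{2R})}$, hence $\lesssim|h|^{2\gamma}$ by the Besov characterization of $B^{\gamma}_{2q-p,\infty}$ — this is exactly what pins down the exponent $2q-p$ for the obstacle; from the coefficient term one is left with $|h|^{2\alpha}\int\eta^2(k(x+h)+k(x))(\mu^2+|Du|^2)^{\frac{2q-p}{2}}$, bounded via Hölder with $k\in L^r$ by $|h|^{2\alpha}\|k\|_{L^r(B_{2R})}$ times a power of $\|Du\|_{L^{(2q-p)r/(r-1)}(B_{2R})}$, hence $\lesssim|h|^{2\alpha}$; the pieces of the coefficient term carrying $|\tau_h D\psi|$ or $|\tau_h(u-\psi)|$ give, in the same way, $|h|^{\alpha+\gamma}$ and $|h|^{1+\alpha}$; and the cut–off term, using $\|\tau_h(u-\psi)\|_{L^{2q-p}(B_{2R})}\le|h|\,\|D(u-\psi)\|_{L^{2q-p}(B_{2R})}$ (finite because $Du,D\psi\in L^{2q-p}_{\mathrm{loc}}$), gives $\lesssim|h|^2$. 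In every instance the integrability of $Du$ needed to close the Hölder step is exactly what the preliminary higher integrability — and hence \eqref{gap} — supplies; this balancing of Hölder exponents is what simultaneously dictates the admissible range of $q/p$ and the value $2q-p$ of the Besov exponent of the obstacle.

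Collecting all contributions, for every $B'\Subset\Omega$ and all small $|h|$,
\[
\int_{B'}|\tau_h V_p(Du)|^2\,dx\ \le\ C\,|h|^{2\alpha},
\]
since $\alpha<\gamma<1$ makes $2\alpha$ the smallest among the exponents $2\alpha,\,2\gamma,\,\alpha+\gamma,\,1+\alpha,\,2$. Dividing by $|h|^{2\alpha}$, passing to the supremum over $0<|h|<h_0$, and invoking the difference–quotient characterization of Besov spaces gives $V_p(Du)=(\mu^2+|Du|^2)^{\frac{p-2}{4}}Du\in B^{\alpha}_{2,\infty,\mathrm{loc}}(\Omega)$, which is the claim. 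I expect the genuine obstacle to be precisely this passage from $\gamma$ down to $\alpha$: the Hölder exponent $\alpha$ governing the $x$--dependence of $\mathcal{A}$ in (A4) is not improved by making the obstacle smoother, which is why the hypothesis $\gamma>\alpha$ is imposed and why the conclusion carries exponent $\alpha$. The two remaining technical nuisances — the shift $\eta(\cdot+h)$ versus $\eta(\cdot)$ generated by the competitor for the translated problem, absorbed by giving $\eta$ an $O(|h|)$ collar, and the a priori finiteness of all the integrals, which again rests on the higher integrability of $Du$ — are routine.
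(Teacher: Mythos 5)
Your outline of the difference--quotient computation is faithful to the structure of the paper's estimate (the split into the six integrals $I_1,\dots,I_6$, the Caccioppoli step, the use of (A2)--(A4), the Besov bound $\|\tau_h D\psi\|_{L^{2q-p}}\lesssim |h|^\gamma$, and the final exponent arithmetic $\min\{2\alpha,\alpha+\gamma,1+\alpha,2\gamma,2\}=2\alpha$ all match). But there is a genuine gap at exactly the point you flag as ``treated as known'': the preliminary higher integrability $Du\in L^{s}_{\mathrm{loc}}$ with $s\ge\frac{(2q-p)r}{r-1}$ is not a fact one can cite from the $(p,q)$ literature for the solution $u$ of the original variational inequality; it is the crux of the problem and is what the paper's whole apparatus is designed to \emph{produce}. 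Under (A1)--(A4) with $q>p$ the minimizer is a priori only in $W^{1,p}$, the map $x\mapsto\mathcal{A}(x,Du)$ is a priori only in $L^{p/(q-1)}_{\mathrm{loc}}$, and the H\"older estimates that close your Caccioppoli inequality require $\int(1+|Du|)^{\frac{r(2q-p)}{r-2}}<\infty$. Running the translation argument directly on $u$ therefore presupposes the very regularity one is trying to prove; without an independent derivation, the argument is circular.

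The paper resolves this by an approximation scheme you do not discuss. It first replaces $F$ by a monotone sequence $F_j$ with standard $p$-growth (Lemma \ref{apprlem1}), so that for each fixed $j$ the known $p=q$ result (Theorem \ref{thmp=q} applied to $u_j$) gives $V_p(Du_j)\in B^{\alpha}_{2,\infty,\mathrm{loc}}$ and hence, via Lemma \ref{Besovlem}, the qualitative bound $Du_j\in L^{\frac{np}{n-2\beta}}_{\mathrm{loc}}$ for $\beta<\alpha$ --- this is what makes the H\"older steps legitimate. The resulting a priori estimate is, however, \emph{self-referential}: the right-hand side contains $\bigl(\int_{B_{t'}}(1+|Du_j|)^{np/(n-2\beta)}\bigr)^{(n-2\beta)/n}$ and must be closed by the interpolation inequality \eqref{interp1} combined with Young's inequality (to place a small coefficient $\theta$ in front of it) and with the iteration Lemma \ref{lm2} on nested radii. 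This is where the gap condition \eqref{gap} is actually \emph{used}: it guarantees both the interpolation exponent condition $\frac{r(2q-p)}{r-2}\le\frac{np}{n-2\beta}$ and the strict inequalities \eqref{ineq.1} needed for Young. Finally the resulting bound is $j$-uniform, and a weak-compactness plus strict-convexity argument identifies the limit of $u_j$ with $u$, transferring the estimate. None of these three ingredients --- the $p$-growth approximants, the interpolation/iteration closure of the self-referential estimate, the passage to the limit --- appears in your proposal, and each is essential; a reference to ``the $(p,q)$ literature'' cannot substitute for them because the higher integrability for this obstacle problem, at this threshold, under \eqref{gap}, is precisely the new quantitative content of Theorem \ref{approximation}.

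A minor additional remark: your bound $\|\tau_h(u-\psi)\|_{L^{2q-p}(B_{2R})}\le|h|\,\|D(u-\psi)\|_{L^{2q-p}(B_{2R})}$ again presupposes $Du\in L^{2q-p}_{\mathrm{loc}}$, which returns to the same circularity. The remaining ingredients you mention (the collar for the cut-off, the $V_p$ equivalence, the Besov characterization by difference quotients) are indeed routine and are handled the same way in the paper.
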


We are also able to prove the following finite case.

\begin{thm}\label{mainthm2}
Let $\A(x,\xi)$ satisfy (A1)-(A3) and (A5) for exponents $2\leq p < q <\frac{n}{\alpha}< r$ such that
\begin{equation}\label{gap2}
\dfrac{q}{p} <  1+ \dfrac{ \min \{ \alpha, \gamma \}}{n}- \frac{1}{r} ,
\end{equation}
where $0< \gamma <1$.
Let $u\in \mathcal{K}_\psi(\Omega)$ be the solution to the obstacle problem \eqref{1}. Then we have
\begin{equation}
D\psi \in B^\gamma_{2q-p,\sigma,\textrm{loc}}(\Omega)\Rightarrow (\mu^2 +|Du|^2)^\frac{p-2}{4} Du \in B^{\min \{\alpha, \gamma  \}}_{2,\sigma,\textrm{loc}}(\Omega),
\end{equation}
provided $\sigma \leq \frac{2n}{n-2 \min \{ \alpha, \gamma \}}$.
\end{thm}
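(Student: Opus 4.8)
The plan is to establish an a priori estimate that controls, at each dyadic scale $|h|\sim 2^{-k}$, the $L^2$-norm of the finite difference $\tau_h\big((\mu^2+|Du|^2)^{\frac{p-2}{4}}Du\big)$ — where $\tau_h v(x):=v(x+h)-v(x)$ — in terms of the increments of $D\psi$ in $L^{2q-p}$ and of the coefficient functions $g_k$ in $L^r$, and then to sum these estimates over $k$ using the Besov structure of $D\psi$ and the summability $\sum_k\|g_k\|_{L^{r}(\Omega)}^\sigma<\infty$ provided by (A5). Since for $p,q$-functionals the weak second differentiability of $u$ is not granted, the argument is carried out on a regularized problem: one mollifies the $x$-dependence of $\A$ and replaces $\psi$ by smooth obstacles $\psi_j\to\psi$ in $W^{1,p}$, solves the corresponding obstacle problems, derives the estimate with constants independent of the regularization parameters, and finally passes to the limit — the limit being legitimate because the Uhlenbeck structure (F1) rules out the Lavrentiev phenomenon. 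Along the way one also uses a preliminary higher-integrability property $Du\in L^{m}_{\textrm{loc}}(\Omega)$ for a suitable exponent $m=m(n,p,q,r)$, which is available under the gap condition \eqref{gap2}.

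\textbf{Step 1: admissible test function and Caccioppoli inequality.} Fix $B_{2\rho}\Subset\Omega$, a cut-off $\eta\in C^\infty_c(B_{2\rho})$ with $0\le\eta\le1$ and $\eta\equiv 1$ on $B_\rho$, and $|h|$ small. Since $u-\psi\ge 0$ a.e. and $0\le\eta\le1$, the function
\[
\varphi=u-\tau_{-h}\big(\eta^2\,\tau_h(u-\psi)\big)
\]
satisfies $\varphi\ge\psi$ a.e., hence $\varphi\in\mathcal K_\psi(\Omega)$. Inserting it into \eqref{1}, using that $\tau_h$ commutes with $D$ and the discrete integration by parts $\int_\Omega f\,\tau_{-h}g\,dx=\int_\Omega(\tau_hf)\,g\,dx$, and writing $\tau_h[\A(x,Du)]=\A_1+\A_2$ with
\[
\A_1=\A(x+h,Du(x+h))-\A(x+h,Du(x)),\qquad \A_2=\A(x+h,Du(x))-\A(x,Du(x)),
\]
one obtains, after expanding $D\big(\eta^2\tau_h(u-\psi)\big)=\eta^2\tau_hDu-\eta^2\tau_hD\psi+2\eta D\eta\,\tau_h(u-\psi)$ and invoking the monotonicity (A2) (which gives $\langle\A_1,\tau_hDu\rangle\ge c\,|\tau_hV_p(Du)|^2$, where $V_p(\xi)=(\mu^2+|\xi|^2)^{\frac{p-2}{4}}\xi$),
\[
\int_\Omega\eta^2|\tau_hV_p(Du)|^2\,dx\le C\int_\Omega\eta^2|\A_2||\tau_hDu|\,dx+C\int_\Omega\big(|\A_1|+|\A_2|\big)\big(\eta^2|\tau_hD\psi|+\eta|D\eta||\tau_h(u-\psi)|\big)\,dx.
\]

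\textbf{Step 2: estimating the right-hand side.} On $\A_1$ I use the $q$-growth bound (A3), producing the weight $(\mu^2+|Du(x+h)|^2+|Du(x)|^2)^{\frac{q-2}{2}}$; on $\A_2$ I use (A5), which at the dyadic scale $|h|\sim 2^{-k}$ gives $|\A_2|\le |h|^\alpha(g_k(x)+g_k(x+h))(\mu^2+|Du(x)|^2)^{\frac{q-1}{2}}$. The terms containing $\tau_hDu$ are split by Young's inequality so as to peel off a small multiple of $\eta^2|\tau_hV_p(Du)|^2$ — absorbed on the left — at the cost of a remainder weighted by $(\mu^2+|Du|^2)^{\frac{2q-p-2}{2}}$; a subsequent Hölder inequality turns these remainders into $\|\tau_hD\psi\|_{L^{2q-p}(B_{2\rho})}^2$, respectively $|h|^{2\alpha}\|g_k\|_{L^{r}(B_{2\rho})}^2$, each multiplied by a fixed power of $\|\mu^2+|Du|^2\|_{L^{m'}(B_{2\rho})}$; this is precisely where the preliminary higher-integrability of $Du$ (hence the gap \eqref{gap2}) enters and where the target exponent $2q-p$ for $D\psi$ arises. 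The terms carrying the function increment $\tau_h(u-\psi)$ are milder: using $\|\tau_h(u-\psi)\|_{L^{2q-p}(B_{2\rho})}\le C|h|\,\|D(u-\psi)\|_{L^{2q-p}(B_{3\rho})}$ they come with an extra factor $|h|^2$ (resp.\ $|h|^{1+\alpha}$) and are disposed of by Young/Hölder without absorption. Altogether, for $|h|\sim 2^{-k}$,
\[
\int_{B_\rho}|\tau_hV_p(Du)|^2\,dx\le C\Big(\|\tau_hD\psi\|_{L^{2q-p}(B_{2\rho})}^2+|h|^{2\alpha}\|g_k\|_{L^{r}(B_{2\rho})}^2+|h|^2\Big),
\]
with $C$ depending on the structural constants and on $\|Du\|_{L^{m}(B_{2\rho})}$, $\|D\psi\|_{L^{2q-p}(B_{2\rho})}$.

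\textbf{Step 3: dyadic summation and conclusion.} Put $\beta=\min\{\alpha,\gamma\}$. By the characterization of the Besov seminorm through $\ell^\sigma$-summability over dyadic increments $h_k$, $|h_k|\sim 2^{-k}$, I raise the last estimate to the power $\sigma/2$ (using sub/super-additivity of $t\mapsto t^{\sigma/2}$), multiply by $2^{k\beta\sigma}$ and sum over $k$: since $\beta\le\gamma$ the $D\psi$-contribution is dominated by $[D\psi]_{B^\gamma_{2q-p,\sigma}}^\sigma<\infty$; since $\beta\le\alpha$ the $g_k$-contribution is $\lesssim\sum_k\|g_k\|_{L^r}^\sigma<\infty$; and the remaining term $\sum_k 2^{k(\beta-2)\sigma}$ converges because $\beta<1$. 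Hence $\sum_k 2^{k\beta\sigma}\big(\int_{B_\rho}|\tau_{h_k}V_p(Du)|^2\big)^{\sigma/2}<\infty$, i.e. $(\mu^2+|Du|^2)^{\frac{p-2}{4}}Du\in B^{\beta}_{2,\sigma,\textrm{loc}}(\Omega)$, which is the claim. The restriction $\sigma\le\frac{2n}{n-2\beta}$ is exactly what makes the embedding $B^{\beta}_{2,\sigma,\textrm{loc}}\hookrightarrow L^{\frac{2n}{n-2\beta}}_{\textrm{loc}}$ available, which in turn supplies (via a bootstrap with the gap \eqref{gap2}) the higher-integrability exponent $m$ used in Step 2. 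The genuinely delicate point is precisely this $p,q$-mismatch: controlling the $q$-growth contributions of $\A_1$ and $\A_2$ by the $p$-ellipticity quantity $\int\eta^2|\tau_hV_p(Du)|^2$ plus data, which forces the preliminary integrability and whose admissible range must be simultaneously compatible with \eqref{gap2}, with $r$, and with the constraint on $\sigma$; verifying this numerology, together with the legitimacy of the limit in the regularization scheme, is where the real work lies, whereas the test-function construction, the Young/Hölder bookkeeping and the dyadic summation are routine once the exponents are fixed.
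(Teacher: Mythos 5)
Your test-function construction, estimates, and dyadic summation follow the same lines as the paper. The function $\varphi=u-\tau_{-h}\big(\eta^2\tau_h(u-\psi)\big)$ is admissible and, after discrete integration by parts, yields exactly the paper's key inequality $\int\langle\tau_h[\A(x,Du)],D(\eta^2\tau_h(u-\psi))\rangle\le 0$, which the paper obtains from the pair of test functions $\varphi_1=u+t\,\eta^2\tau_h(u-\psi)$ and $\varphi_2=u+t\,\eta^2(\cdot-h)\tau_{-h}(u-\psi)$; the split into $\A_1/\A_2$, the use of (A2)/(A3)/(A5), and the conversion of the $h$-integral into a sum over the dyadic annuli $E_k$ using $\sum_k\|g_k\|^\sigma_{L^r}<\infty$ and $D\psi\in B^\gamma_{2q-p,\sigma}$ are the same as in Section 5. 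Your step 2 final bound compresses the paper's six terms $I_1,\dots,I_6$ into three representative ones; that is fine (the cross terms are disposed of by Young as you say, and the exponent on your $|h|^2$-contribution should read $2^{k(\beta-1)\sigma}$ rather than $2^{k(\beta-2)\sigma}$, but the conclusion $\beta<1$ still applies).

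There are, however, two substantive gaps relative to the paper's argument. First, the regularization scheme you propose (mollifying the $x$-dependence of $\A$ and replacing $\psi$ by smooth obstacles) is not the one in the paper and is problematic: mollifying the coefficients does not change the $q$-growth of the integrand, so the approximating minimizers would still be a priori only in $W^{1,p}_{\textrm{loc}}$, and the preliminary higher integrability $Du_j\in L^{np/(n-2\lambda)}_{\textrm{loc}}$ that you invoke in Step~2 would not be available; moreover, mollifying $\A$ would destroy the dyadic structure of assumption (A5), and mollifying $\psi$ requires verifying uniform Besov bounds on $D\psi_j$. The paper instead truncates the integrand to obtain $F_j$ satisfying genuine standard $p$-growth (Lemma~\ref{apprlem1} together with Remark~\ref{remappr} for the (F6) case), so that Theorem~\ref{1thmp=q} for the $p=q$ case applies to the approximating minimizer $u_j$ and delivers $V_p(Du_j)\in B^{\lambda}_{2,\sigma,\textrm{loc}}$, whence by the embedding of Lemma~\ref{3.1}(a), $Du_j\in L^{np/(n-2\lambda)}_{\textrm{loc}}$. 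Second, you leave implicit the actual mechanism by which the $q$-growth remainders are absorbed: the paper introduces the interpolation inequality $\|Dw\|_{r(2q-p)/(r-2)}\le\|Dw\|_p^{\delta}\|Dw\|_{np/(n-2\lambda)}^{1-\delta}$, applies Young to split each remainder as a small multiple of $\big(\int(1+|Du_j|)^{np/(n-2\lambda)}\big)^{(n-2\lambda)\sigma/(2n)}$ plus data-dependent terms, and then closes the loop with the iteration lemma (Lemma~\ref{lm2}). This is precisely where \eqref{gap2} is used (it is equivalent to the three exponent inequalities \eqref{ineq.1}), and it is why the constants end up independent of $j$. You flag this as "the real work," which is accurate, but the proposal does not supply the two ingredients — $p$-growth truncation and interpolation/iteration — that make the step work.
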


Existence of solutions to the obstacle problem \eqref{1} can be easily proved through classical results regarding variational inequalities, so in this paper we will mainly concentrate on the regularity results. The proof of Theorems \ref{mainthm1} and \ref{mainthm2} is achieved by means of difference quotient method, that is quite natural when trying to establish higher differentiabilty results and local gradient estimates (see for instance \cite{marcellini1, marcellini1991}). Here the difficulties come from the set of admissible test functions that have to take into account the presence of the obstacle. In order to overcome this issue, we consider difference quotient involving both the solution and the obstacle, so that the function satisfies the constraint of belonging to the admissible class $\mathcal{K}_{\psi}(\Omega)$.\\ Finally, we observe that the assumption \eqref{gap} is crucial for obtaining the conclusion of Theorem \ref{mainthm1}. This is the natural counterpart in the fractional setting of the corresponding inequality considered for the first time in \cite{eleuteri.marcellini.mascolo3}. Indeed, our estimate is sharp, that is for $\alpha=1$ we recover the result in \cite{eleuteri.marcellini.mascolo3}. In fact, when referring to $p,q$-growth conditions, in order to ensure the regularity of minima, the gap $q/p>1$ cannot differ too much from $1$ (see for instance the counterexamples \cite{fonseca.maly.mingione,giaquinta,marcellini1991}).
\\
The structure of this paper is the following. After recalling some notation and preliminary results in Section \ref{secnot}, we concentrate on proving our main results, Theorems \ref{mainthm1} and \ref{mainthm2}. In both cases, the strategy is to establish the a priori estimate for an approximating solution and then pass to the limit in the approximating problem. Therefore, we present our approximation results in 
Section \ref{secappr}, namely we are able to prove the existence of a sequence of functions with $p$-growth conditions that monotonically converges to our initial problems. In Section \ref{secthm1} we take care of Theorem \ref{mainthm1}. In particular, we derive the a priori estimates in Section \ref{ape} for an approximating problem satisfying standard growth conditions. Then, in Section \ref{secproof} we exploit the results of Sections \ref{secappr} and \ref{ape} and using compactness, strictly convexity and weak lower semi-continuity of functional $F$, we are able to prove Theorem \ref{mainthm1}. Eventually, in Section \ref{secthm2} we prove Theorem \ref{mainthm2}, focusing on the a priori estimate, since the limit procedure works exactly in the same way as for the previous result.

\section{Notations and preliminary results}
\label{secnot}
In what follows, $B(x,r)=B_{r}(x)= \{ y \in \mathbb{R}^{n} : |y-x | < r  \}$ will denote the ball centered at $x$ of radius $r$. We shall omit the dependence on the center and on the radius when no confusion arises. For a function $u \in L^{1}(B)$, the symbol
\begin{center}
$\displaystyle\fint_{B} u(x) dx = \dfrac{1}{|B|} \displaystyle\int_{B} u(x) dx$.
\end{center}
will denote the integral mean of the function $u$ over the set $B$.

It is convenient to introduce an auxiliary function
\begin{center}
$V_{p}(\xi)=(\mu^{2}+|\xi|^{2})^\frac{p-2}{4} \xi$
\end{center}
defined for all $\xi\in \mathbb{R}^{n}$. One can easily check that, for $p \geq 2$, there exists an absolute constant $c$ such that
\begin{gather}
|\xi|^p \leq c |V_p(\xi)|^2. \label{Vp}
\end{gather}
For the auxiliary function $V_{p}$, we recall the following estimate (see the proof of \cite[Lemma 8.3]{giusti}): 
\begin{lem}\label{D1}
Let $1<p<+\infty$. There exists a constant $c=c(n,p)>0$ such that
\begin{center}
$c^{-1}(\mu^{2}+|\xi|^{2}+|\eta|^{2})^{\frac{p-2}{2}} \leq \dfrac{|V_{p}(\xi)-V_{p}(\eta)|^{2}}{|\xi-\eta|^{2}} \leq c(\mu^{2}+|\xi|^{2}+|\eta|^{2})^{\frac{p-2}{2}} $
\end{center}
for any $\xi, \eta \in \mathbb{R}^{n}$.
\end{lem}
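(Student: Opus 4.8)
The plan is to use the classical device of writing the increment of $V_p$ as the integral of its differential along the segment joining $\eta$ to $\xi$, and then to reduce the whole statement to an elementary one--dimensional weight estimate.

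Since both sides of the claimed chain of inequalities are continuous in $\mu\ge 0$ for fixed $\xi\neq\eta$ and the constant produced below is independent of $\mu$, it suffices to argue for $\mu>0$, so that $V_p\in C^\infty(\mathbb{R}^n)$; the case $\xi=\eta$ is trivial, and we may assume $|\xi|\ge|\eta|$. A direct computation gives
\begin{equation*}
D_\xi V_p(z)=(\mu^2+|z|^2)^{\frac{p-2}{4}}\Bigl(\mathrm{Id}+\tfrac{p-2}{2}\,\tfrac{z\otimes z}{\mu^2+|z|^2}\Bigr),
\end{equation*}
a symmetric matrix whose eigenvalues are $(\mu^2+|z|^2)^{\frac{p-2}{4}}$ (multiplicity $n-1$) and $(\mu^2+|z|^2)^{\frac{p-2}{4}}\theta(z)$, where $\theta(z)=\frac{\mu^2+\frac p2|z|^2}{\mu^2+|z|^2}$ is a convex combination of $1$ and $p/2$ and hence lies in $[\min\{1,\tfrac p2\},\max\{1,\tfrac p2\}]$. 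Consequently, for every $\lambda\in\mathbb{R}^n$, $|D_\xi V_p(z)\lambda|\simeq(\mu^2+|z|^2)^{\frac{p-2}{4}}|\lambda|$ and $\langle D_\xi V_p(z)\lambda,\lambda\rangle\simeq(\mu^2+|z|^2)^{\frac{p-2}{4}}|\lambda|^2$, with constants depending only on $p$.

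Writing $\gamma(t)=(1-t)\eta+t\xi$, the fundamental theorem of calculus gives $V_p(\xi)-V_p(\eta)=\bigl(\int_0^1 D_\xi V_p(\gamma(t))\,dt\bigr)(\xi-\eta)$; since the averaged matrix is again symmetric with the same eigenvalue bounds, taking norms yields the upper bound and pairing with $\xi-\eta$ together with Cauchy--Schwarz yields the lower bound, so that
\begin{equation*}
c(p)^{-1}\,J\,|\xi-\eta|\ \le\ |V_p(\xi)-V_p(\eta)|\ \le\ c(p)\,J\,|\xi-\eta|,\qquad J:=\int_0^1(\mu^2+|\gamma(t)|^2)^{\frac{p-2}{4}}\,dt .
\end{equation*}
Everything is thus reduced to proving $J\simeq(\mu^2+|\xi|^2+|\eta|^2)^{\frac{p-2}{4}}$ with constants depending only on $p$. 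I would use the elementary facts $|\gamma(t)|\le|\xi|$ for all $t$, $|\gamma(t)|\ge(2t-1)|\xi|$ for $t\in[\tfrac12,1]$ (valid since $|\xi|\ge|\eta|$, whence $|\xi-\eta|\le 2|\xi|$), and $\mu^2+|\xi|^2\simeq\mu^2+|\xi|^2+|\eta|^2$. For $p\ge 2$ the map $s\mapsto s^{\frac{p-2}{4}}$ is nondecreasing, so $J\le(\mu^2+|\xi|^2)^{\frac{p-2}{4}}$, while restricting the integration to $[\tfrac34,1]$, where $|\gamma(t)|\ge\tfrac12|\xi|$, gives $J\ge c(p)(\mu^2+|\xi|^2)^{\frac{p-2}{4}}$; for $1<p<2$ the monotonicity is reversed, so $J\ge(\mu^2+|\xi|^2)^{\frac{p-2}{4}}$ is immediate, and the upper bound follows from the change of variables $|\gamma(t)|^2=|\xi-\eta|^2(t-t_\ast)^2+d^2$ ($d=$ distance of the origin from the line through $\eta,\xi$) together with a one--dimensional estimate of $\int(\mu^2+d^2+|\xi-\eta|^2\tau^2)^{\frac{p-2}{4}}\,d\tau$, exactly as in the proof of \cite[Lemma~8.3]{giusti}.

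The only genuinely delicate point is this last estimate of $J$: the segment $t\mapsto\gamma(t)$ may pass arbitrarily close to --- or through --- the origin, so for $1<p<2$ the integrand can be unbounded and $J$ cannot be controlled from above by a pointwise supremum; what rescues the bound is precisely the integrability near $0$ of $\tau\mapsto|\tau|^{\frac{p-2}{2}}$, which holds since $p>1$, and this is where the change of variables is needed. For the application in the present paper only the range $p\ge 2$ is relevant, and there $J$ is handled by the elementary monotonicity argument above, with no such subtlety.
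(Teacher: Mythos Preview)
The paper does not supply its own proof of this lemma; it merely refers the reader to \cite[Lemma 8.3]{giusti}. Your argument is correct and is essentially the classical one found there: you express $V_p(\xi)-V_p(\eta)$ via the fundamental theorem of calculus along the segment, control the eigenvalues of $D_\xi V_p$, reduce everything to the one--dimensional weight integral $J$, and then estimate $J$ by monotonicity in the easy range $p\ge 2$ and by the change of variables/integrability argument in the range $1<p<2$. Nothing substantive differs from the reference cited in the paper.
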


Now we state a well-known iteration lemma (see \cite{giusti} for the proof).
\begin{lem}\label{lm2}
Let $\Phi  :  [\frac{R}{2},R] \rightarrow \mathbb{R}$ be a bounded nonnegative function, where $R>0$. Assume that for all $\frac{R}{2} \leq r < s \leq R$ it holds
$$\Phi (r) \leq \theta \Phi(s) +A + \dfrac{B}{(s-r)^2}+ \dfrac{C}{(s-r)^{\gamma}}$$
where $\theta \in (0,1)$, $A$, $B$, $C \geq 0$ and $\gamma >0$ are constants. Then there exists a constant $c=c(\theta, \gamma)$ such that
$$\Phi \biggl(\dfrac{R}{2} \biggr) \leq c \biggl( A+ \dfrac{B}{R^2}+ \dfrac{C}{R^{\gamma}}  \biggr).$$
\end{lem}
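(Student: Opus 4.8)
The plan is to run the classical hole-filling iteration on a geometrically spaced family of radii between $\frac R2$ and $R$, and then to sum the resulting geometric series after choosing the ratio of that family close enough to $1$.

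First I would fix a parameter $\tau\in(0,1)$, to be selected at the very end, and define the increasing sequence $r_0=\frac R2$ and $r_{i+1}=r_i+(1-\tau)\tau^i\,\frac R2$ for $i\ge0$. Since $\sum_{i\ge0}(1-\tau)\tau^i=1$, the sequence satisfies $r_i\uparrow R$ and $\frac R2\le r_i<r_{i+1}\le R$ for every $i$, so the hypothesis legitimately applies to each pair $(r_i,r_{i+1})$, with increment $r_{i+1}-r_i=(1-\tau)\tau^i\frac R2$. This gives
\[
\Phi(r_i)\le\theta\,\Phi(r_{i+1})+A+\frac{B}{\big((1-\tau)\tau^i R/2\big)^2}+\frac{C}{\big((1-\tau)\tau^i R/2\big)^\gamma}.
\]

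Next I would iterate this inequality $k$ times starting from $i=0$, obtaining
\[
\Phi\Big(\tfrac R2\Big)\le\theta^k\Phi(r_k)+A\sum_{i=0}^{k-1}\theta^i+\frac{B}{\big((1-\tau)R/2\big)^2}\sum_{i=0}^{k-1}\Big(\frac{\theta}{\tau^2}\Big)^i+\frac{C}{\big((1-\tau)R/2\big)^\gamma}\sum_{i=0}^{k-1}\Big(\frac{\theta}{\tau^\gamma}\Big)^i.
\]
The only genuine choice in the whole argument — and hence the main (mild) obstacle — is to pick $\tau\in(0,1)$ so large that $\tau^{\max\{2,\gamma\}}>\theta$, which is possible precisely because $\theta<1$; this forces both $\theta/\tau^2<1$ and $\theta/\tau^\gamma<1$, so all three geometric series converge as $k\to\infty$. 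One has to be careful here that $\gamma$ need not be $\le2$, which is why the threshold for $\tau$ must be $\theta^{1/\max\{2,\gamma\}}$ rather than $\theta^{1/2}$. Moreover, since $\Phi$ is bounded and $\theta^k\to0$, the term $\theta^k\Phi(r_k)$ vanishes in the limit — this is exactly where the boundedness hypothesis is used.

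Finally, letting $k\to\infty$ I would conclude
\[
\Phi\Big(\tfrac R2\Big)\le\frac{A}{1-\theta}+\frac{B}{\big((1-\tau)R/2\big)^2\,(1-\theta\tau^{-2})}+\frac{C}{\big((1-\tau)R/2\big)^\gamma\,(1-\theta\tau^{-\gamma})}.
\]
Because the chosen $\tau$ depends only on $\theta$ and $\gamma$, all the factors involving $\tau$ together with the harmless powers of $2$ can be absorbed into a single constant $c=c(\theta,\gamma)$, which yields $\Phi(R/2)\le c\,(A+B/R^2+C/R^\gamma)$, as claimed. No further subtleties arise.
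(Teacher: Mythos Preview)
Your argument is correct and is precisely the classical hole-filling iteration (choice of geometric radii, selection of $\tau$ with $\tau^{\max\{2,\gamma\}}>\theta$, and use of the boundedness of $\Phi$ to kill $\theta^k\Phi(r_k)$). The paper does not supply its own proof of this lemma but simply refers the reader to Giusti's book, where exactly this argument appears; so your proposal matches the intended proof.
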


\subsection{Besov-Lipschitz spaces}
\label{secbesov}
Let $v:\mathbb{R}^{n} \rightarrow \mathbb{R}$ be a function. As in \cite[Section 2.5.12]{haroske}, given $0< \alpha <1$ and $1 \leq p,q < \infty$, we say that $v$ belongs to the Besov space $B^{\alpha}_{p,q}(\mathbb{R}^{n})$ if $v \in L^{p}(\mathbb{R}^{n})$ and
\begin{center}
$\Vert v \Vert_{B^{\alpha}_{p,q}(\mathbb{R}^{n})} = \Vert v \Vert_{L^{p}(\mathbb{R}^{n})} + [v]_{B^{\alpha}_{p,q}(\mathbb{R}^{n})} < \infty$,
\end{center}
where
\begin{center}
$[v]_{B^{\alpha}_{p,q}(\mathbb{R}^{n})} =  \biggl( \displaystyle\int_{\mathbb{R}^{n}} \biggl( \displaystyle\int_{\mathbb{R}^{n}} \dfrac{|v(x+h)-v(x)|^{p}}{|h|^{\alpha p}} dx \biggr)^{\frac{q}{p}}  \dfrac{dh}{|h|^{n}} \biggr)^{\frac{1}{q}}  < \infty$.
\end{center}
Equivalently, we could simply say that $v \in L^{p}(\mathbb{R}^{n})$ and $\frac{\tau_{h}{v}}{|h|^{\alpha}} \in L^{q}\bigl( \frac{dh}{|h|^{n}}; L^{p}(\mathbb{R}^{n}) \bigr)$. As usual, if one integrates for $h \in B(0, \delta)$ for a fixed $\delta >0$ then an equivalent norm is obtained, because
\begin{center}
$\biggl( \displaystyle\int_{\{|h| \geq \delta\}} \biggl( \displaystyle\int_{\mathbb{R}^{n}} \dfrac{|v(x+h)-v(x)|^{p}}{|h|^{\alpha p}} dx \biggr)^{\frac{q}{p}}  \dfrac{dh}{|h|^{n}} \biggr)^{\frac{1}{q}} \leq c(n, \alpha,p,q, \delta) \Vert v \Vert_{L^{p}(\mathbb{R}^{n})} $.
\end{center}
Similarly, we say that $v \in B^{\alpha}_{p,\infty}(\mathbb{R}^{n})$ if $v \in L^{p}(\mathbb{R}^{n})$ and
\begin{center}
$[v]_{B^{\alpha}_{p, \infty}(\mathbb{R}^{n})} =  \displaystyle\sup_{h \in \mathbb{R}^{n}} \biggl( \displaystyle\int_{\mathbb{R}^{n}} \dfrac{|v(x+h)-v(x)|^{p}}{|h|^{\alpha p}} dx \biggr)^{\frac{1}{p}} < \infty $.
\end{center}
Again, one can simply take supremum over $|h| \leq \delta$ and obtain an equivalent norm. By construction, $B^{\alpha}_{p, q}(\mathbb{R}^{n}) \subset L^{p}(\mathbb{R}^{n})$. One also has the following version of Sobolev embeddings (a proof can be found at \cite[Proposition 7.12]{haroske}).
\begin{lem}\label{3.1}
Suppose that $0 < \alpha <1$.
\\ (a) If $1 < p < \frac{n}{\alpha}$ and $1 \leq q \leq p^{*}_{\alpha} = \frac{np}{n- \alpha p}$, then there is a continuous embedding $B^{\alpha}_{p, q}(\mathbb{R}^{n}) \subset L^{p^{*}_{\alpha}}(\mathbb{R}^{n})$.
\\ (b) If $p = \frac{n}{\alpha}$ and $1 \leq q \leq \infty$, then there is a continuous embedding $B^{\alpha}_{p, q}(\mathbb{R}^{n}) \subset BMO(\mathbb{R}^{n})$,
\\ where $BMO$ denotes the space of functions with bounded mean oscillations \emph{\cite[Chapter 2]{giusti}}.
\end{lem}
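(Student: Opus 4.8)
\textbf{Proof proposal for Lemma \ref{3.1}.}

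The plan is to reduce both embeddings to standard facts about Besov spaces via the usual chain of inequalities, exploiting the monotonicity of Besov norms in the fine parameter $q$ together with the classical real-interpolation or Fourier-analytic characterisation. First I would recall the elementary nesting $B^{\alpha}_{p,q_1}(\mathbb{R}^n) \hookrightarrow B^{\alpha}_{p,q_2}(\mathbb{R}^n)$ whenever $q_1 \leq q_2$, which follows directly from the definition of the seminorm $[v]_{B^{\alpha}_{p,q}}$ by the embedding $\ell^{q_1} \hookrightarrow \ell^{q_2}$ applied after dyadically decomposing the $h$-integral (or to the Littlewood--Paley pieces). This means that for part (a) it suffices to treat the endpoint $q = p^{*}_{\alpha}$, since for $1 \leq q \leq p^{*}_{\alpha}$ we have $B^{\alpha}_{p,q} \hookrightarrow B^{\alpha}_{p,p^{*}_{\alpha}}$, and likewise for part (b) it suffices to treat $q = 1$ for the upper end and $q$ arbitrary otherwise, using $B^{\alpha}_{n/\alpha,q} \hookrightarrow B^{\alpha}_{n/\alpha,\infty}$ together with the fact (to be justified below) that even $B^{\alpha}_{n/\alpha,\infty} \hookrightarrow BMO$.

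For part (a), the core step is the Besov--Sobolev embedding $B^{\alpha}_{p,p^{*}_{\alpha}}(\mathbb{R}^n) \hookrightarrow B^{0}_{p^{*}_{\alpha},p^{*}_{\alpha}}(\mathbb{R}^n) = L^{p^{*}_{\alpha}}(\mathbb{R}^n)$, valid when $1 < p < \frac{n}{\alpha}$ and $\alpha - \frac{n}{p} = 0 - \frac{n}{p^{*}_{\alpha}}$, which is exactly the scaling relation defining $p^{*}_{\alpha}$. I would derive this from the Littlewood--Paley characterisation: writing $v = \sum_j \Delta_j v$, Bernstein's inequality gives $\|\Delta_j v\|_{L^{p^{*}_{\alpha}}} \lesssim 2^{j n(1/p - 1/p^{*}_{\alpha})} \|\Delta_j v\|_{L^{p}} = 2^{j\alpha}\|\Delta_j v\|_{L^p}$, and then the embedding $\ell^{p} \hookrightarrow \ell^{p^{*}_{\alpha}}$ (note $p \leq p^{*}_{\alpha}$) applied to the sequence $\bigl(2^{j\alpha}\|\Delta_j v\|_{L^p}\bigr)_j$ together with the Fatou/triangle inequality in $L^{p^{*}_{\alpha}}$ controls $\|v\|_{L^{p^{*}_{\alpha}}}$ by $[v]_{B^{\alpha}_{p,p^{*}_{\alpha}}} + \|v\|_{L^p}$; combining with the definition of the Besov norm yields continuity of the embedding. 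Alternatively, one can cite \cite[Proposition 7.12]{haroske} directly, which is what the statement does.

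For part (b), I would invoke the limiting Sobolev embedding at the critical integrability exponent: when $p = \frac{n}{\alpha}$ one has $\alpha - \frac{n}{p} = 0$, and the target $L^\infty$ fails, but $B^{\alpha}_{n/\alpha,q} \hookrightarrow BMO(\mathbb{R}^n)$ holds for every $q \in [1,\infty]$. The cleanest route is again Littlewood--Paley: for $v \in B^{\alpha}_{n/\alpha,\infty}$, Bernstein gives $\|\Delta_j v\|_{L^\infty} \lesssim 2^{jn/p}\|\Delta_j v\|_{L^p} = 2^{j\alpha}\|\Delta_j v\|_{L^p}$, so $\sup_j \|\Delta_j v\|_{L^\infty} \lesssim [v]_{B^{\alpha}_{n/\alpha,\infty}}$, and one estimates the mean oscillation of $v$ over a ball $B_R$ by splitting $v = \sum_{2^j \leq 1/R}\Delta_j v + \sum_{2^j > 1/R}\Delta_j v$: the high-frequency tail contributes $\lesssim \sum_{2^j > 1/R} 2^{jn/p}\|\Delta_j v\|_{L^p}$ which, paired against the volume factor $R^{n}$ in the average, is summable and bounded by the seminorm, while the low-frequency part is Lipschitz at scale $R$ with the right constant. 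This gives $\|v\|_{BMO} \lesssim \|v\|_{B^{\alpha}_{n/\alpha,\infty}}$, and the result for all finite $q$ follows by the nesting from the first paragraph. Again, the reference \cite[Proposition 7.12]{haroske} (with \cite[Chapter 2]{giusti} for the definition of $BMO$) may be cited in place of this computation.

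The main obstacle, if one insists on a self-contained argument rather than citing \cite{haroske}, is the critical case (b): one must be careful that the dyadic sum defining the mean oscillation converges and that the bound is uniform in the ball, which requires the correct pairing of the Bernstein gain $2^{j\alpha}$ against the averaging scale; the subcritical case (a) is comparatively routine once the Littlewood--Paley machinery and the $\ell^p \hookrightarrow \ell^{p^{*}_{\alpha}}$ nesting are in place. Since the paper only needs the statement as a black box, however, the honest proof here is simply to point to \cite[Proposition 7.12]{haroske} and observe that the $q$-monotonicity of Besov spaces upgrades the single endpoint embedding there to the full range of $q$ claimed.
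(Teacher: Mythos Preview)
The paper does not prove this lemma at all: it is stated as a preliminary fact with the parenthetical ``a proof can be found at \cite[Proposition 7.12]{haroske}'', and nothing further is offered. Your proposal therefore goes well beyond the paper's treatment. Your Littlewood--Paley sketch (Bernstein's inequality plus $\ell^{q}$-monotonicity for part (a), and the high/low frequency splitting for the critical $BMO$ embedding in part (b)) is the standard route and is essentially correct as an outline; your closing remark that one may simply cite \cite[Proposition 7.12]{haroske} is exactly what the paper does. So your approach is not so much different from the paper's as strictly more detailed: the paper treats the lemma as a black box, while you indicate how the box could be opened.
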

For further needs, we recall the following inclusions (\cite[Proposition 7.10 and Formula (7.35)]{haroske}).
\begin{lem}\label{3.2}
Suppose that $0 < \beta < \alpha < 1$.
\\ (a) If $1 < p < \infty$ and $1 \leq q \leq r \leq \infty$, then $B^{\alpha}_{p, q}(\mathbb{R}^{n}) \subset B^{\alpha}_{p, r}(\mathbb{R}^{n})$.
\\ (b) If $1 < p < \infty$ and $1 \leq q , r \leq \infty$, then $B^{\alpha}_{p, q}(\mathbb{R}^{n}) \subset B^{\alpha}_{p, r}(\mathbb{R}^{n})$.
\\ (c) If $1 \leq q \leq \infty$, then $B^{\alpha}_{\frac{n}{\alpha}, q}(\mathbb{R}^{n}) \subset B^{\beta}_{\frac{n}{\beta}, q}(\mathbb{R}^{n})$.
\end{lem}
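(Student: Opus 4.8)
The plan is to derive all three inclusions from the dyadic-shell reformulation of the difference seminorm, handling (a)--(b) by purely elementary monotonicity of $\ell^q$-spaces and reserving the one genuinely analytic step for (c). As preliminaries I would, for $v\in L^p(\mathbb{R}^n)$, introduce the $L^p$-modulus $\omega_p(h):=\|v(\cdot+h)-v(\cdot)\|_{L^p(\mathbb{R}^n)}$, note that it is subadditive (translations being $L^p$-isometries) hence doubling, $\omega_p(2h)\le 2\omega_p(h)$, and bounded by $2\|v\|_{L^p}$; then split $\mathbb{R}^n\setminus\{0\}$ into the dyadic shells $A_j=\{2^{-j-1}\le|h|<2^{-j}\}$, $j\in\mathbb{Z}$, on each of which $\int_{A_j}|h|^{-n}\,dh$ is a dimensional constant independent of $j$. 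This rewrites, for $1\le q<\infty$,
\[
[v]^{q}_{B^{\alpha}_{p,q}(\mathbb{R}^n)}=\sum_{j\in\mathbb{Z}}c_j^{q},\qquad c_j:=\Bigl(\int_{A_j}\tfrac{\omega_p(h)^q}{|h|^{\alpha q}}\tfrac{dh}{|h|^{n}}\Bigr)^{1/q},
\]
with the evident $\sup_j$--$L^\infty(A_j)$ version for $q=\infty$, so that, up to the common term $\|v\|_{L^p}$, membership in $B^\alpha_{p,q}(\mathbb{R}^n)$ amounts to $(c_j)_j\in\ell^q(\mathbb{Z})$.

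For part (a): when $r<\infty$, Jensen's inequality on the shell $(A_j,|h|^{-n}dh)$ (whose total mass is $j$-independent) gives a pointwise bound $c_j^{(r)}\lesssim c_j^{(q)}$ since $q\le r$, and summing together with the inclusion $\ell^q(\mathbb{Z})\hookrightarrow\ell^r(\mathbb{Z})$ closes it; when $r=\infty$ one instead controls $\omega_p(h_0)$, for $|h_0|=\rho$, by the $|h|^{-n}dh$-average over $\{\rho/4\le|h|\le\rho/2\}$ of $\omega_p(h_0-h)+\omega_p(h)$, observes that both $h_0-h$ and $h$ then meet only finitely many shells, and applies Hölder there to get $\omega_p(h_0)\lesssim\rho^\alpha\sup_j c_j^{(q)}$, whence $B^\alpha_{p,q}\hookrightarrow B^\alpha_{p,\infty}$. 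Part (b), which — in accordance with the standing hypothesis $0<\beta<\alpha<1$ — I read as $B^\alpha_{p,q}(\mathbb{R}^n)\hookrightarrow B^\beta_{p,r}(\mathbb{R}^n)$, then reduces by part (a) to the single link $B^\alpha_{p,\infty}\hookrightarrow B^\beta_{p,1}$: if $\omega_p(h)\le M|h|^\alpha$ with $M=[v]_{B^\alpha_{p,\infty}}$, while also $\omega_p(h)\le 2\|v\|_{L^p}$, then
\[
[v]_{B^\beta_{p,1}(\mathbb{R}^n)}\le M\int_{|h|\le 1}|h|^{\alpha-\beta-n}\,dh+2\|v\|_{L^p}\int_{|h|> 1}|h|^{-\beta-n}\,dh<\infty,
\]
the two integrals converging precisely because $\alpha-\beta>0$ and $\beta>0$ respectively.

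Part (c) is the step I expect to be the main obstacle: it is a Sobolev-type embedding sitting on the borderline $sp=n$ of Lemma \ref{3.1}(b) (indeed $\alpha\cdot\tfrac{n}{\alpha}=\beta\cdot\tfrac{n}{\beta}=n$), so no endpoint slack remains and a purely measure-theoretic estimate on the shells will not suffice. Here I would pass to the Littlewood--Paley norm $\|v\|_{B^{s}_{p,q}}\simeq\bigl\|(2^{js}\|\Delta_j v\|_{L^p})_{j\ge0}\bigr\|_{\ell^q}$ — equivalent to the difference-quotient norm by \cite{haroske}, with $\Delta_0$ the low-frequency block — and invoke Bernstein's inequality for the frequency-localized pieces: setting $p_1=n/\alpha<p_2=n/\beta$,
\[
\|\Delta_j v\|_{L^{p_2}(\mathbb{R}^n)}\le C\,2^{jn(1/p_1-1/p_2)}\|\Delta_j v\|_{L^{p_1}(\mathbb{R}^n)},\qquad j\ge 0.
\]
Since $n(1/p_1-1/p_2)=\alpha-\beta$, this becomes $2^{j\beta}\|\Delta_j v\|_{L^{p_2}}\le C\,2^{j\alpha}\|\Delta_j v\|_{L^{p_1}}$ for every $j$, and taking $\ell^q$-norms yields $\|v\|_{B^\beta_{p_2,q}(\mathbb{R}^n)}\le C\|v\|_{B^\alpha_{p_1,q}(\mathbb{R}^n)}$ (in particular $v\in L^{p_2}$, since $\beta>0$ makes $\sum_j\|\Delta_j v\|_{L^{p_2}}$ converge), which is the assertion. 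The points that need care in (c) are the correct normalization of the Littlewood--Paley blocks so that Bernstein applies uniformly in $j$ (the $\Delta_0$ block having frequency scale $\simeq 2^0$), and the fact that one is trading the difference-quotient definition of $B^s_{p,q}$ for the equivalent spectral one — which is exactly where the argument relies on the literature already quoted.
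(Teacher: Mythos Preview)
The paper does not actually prove this lemma: it is stated as a preliminary fact and simply attributed to \cite[Proposition~7.10 and Formula~(7.35)]{haroske}. There is therefore no argument in the paper to compare your proposal against.

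As for the proposal itself, it is a reasonable and essentially correct sketch of the standard proofs. Your observation that statement~(b), as printed, is literally identical to~(a) without the ordering $q\le r$ --- and hence must be read, in view of the standing hypothesis $0<\beta<\alpha<1$, as $B^{\alpha}_{p,q}(\mathbb{R}^n)\subset B^{\beta}_{p,r}(\mathbb{R}^n)$ --- is on point; this is indeed how the result is stated in the cited reference. Your arguments for~(a) via the $\ell^q\hookrightarrow\ell^r$ monotonicity on dyadic shells, for~(b) via the single link $B^{\alpha}_{p,\infty}\hookrightarrow B^{\beta}_{p,1}$, and for~(c) via Littlewood--Paley blocks and Bernstein's inequality are the standard routes and would go through with the care you already flag (normalization of the low-frequency block, equivalence of the difference-quotient and spectral norms). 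Since the paper itself treats the lemma as a black box from \cite{haroske}, your write-up goes well beyond what the paper provides.
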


Given a domain $\Omega \subset \mathbb{R}^{n}$, we say that $v$ belongs to the local Besov space $ B^{\alpha}_{p, q,loc}$ if $\varphi \ v \in B^{\alpha}_{p, q}(\mathbb{R}^{n})$ whenever $\varphi \in \mathcal{C}^{\infty}_{c}(\Omega)$. It is worth noticing that one can prove suitable version of Lemma \ref{3.1} and Lemma \ref{3.2}, by using local Besov spaces.

The following Lemma can be found in \cite{baison.clop2017}.
\begin{lem}
A function $v \in L^{p}_{loc}(\Omega)$ belongs to the local Besov space $B^{\alpha}_{p,q,loc}$ if, and only if,
\begin{center}
$\biggl\Vert \dfrac{\tau_{h}v}{|h|^{\alpha}} \biggr\Vert_{L^{q}\bigl(\frac{dh}{|h|^{n}};L^{p}(B)\bigr)}<  \infty$
\end{center}
for any ball $B\subset2B\subset\Omega$ with radius $r_{B}$. Here the measure $\frac{dh}{|h|^n}$ is restricted to the ball $B(0,r_B)$ on the h-space.
\end{lem}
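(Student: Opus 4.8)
The statement is a localization principle: membership in the global Besov space $B^\alpha_{p,q}(\mathbb{R}^n)$ after multiplication by an arbitrary cut-off $\varphi\in\mathcal{C}^\infty_c(\Omega)$ is equivalent to the finiteness of the intrinsic difference-quotient seminorm $\bigl\Vert |h|^{-\alpha}\tau_h v\bigr\Vert_{L^q(\tfrac{dh}{|h|^n};L^p(B))}$ on every ball $B$ with $2B\subset\Omega$, where the $h$-integration is truncated to $B(0,r_B)$. My plan is to prove the two implications separately, the heart of the matter being a Leibniz-type estimate for the finite difference $\tau_h(\varphi v)$ together with the standard reduction (already recalled in the excerpt, just after the definition of $B^\alpha_{p,\infty}(\mathbb{R}^n)$) that the $h$-integral over $\{|h|\ge\delta\}$ is controlled by the $L^p$ norm, so that only small $h$ matter.

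\textbf{Necessity.} Assume $v\in B^\alpha_{p,q,\mathrm{loc}}$, i.e. $\varphi v\in B^\alpha_{p,q}(\mathbb{R}^n)$ for all $\varphi\in\mathcal{C}^\infty_c(\Omega)$. Fix a ball $B=B(x_0,r_B)$ with $2B\subset\Omega$ and pick $\varphi\in\mathcal{C}^\infty_c(\Omega)$ with $\varphi\equiv 1$ on $2B$. For $|h|<r_B$ and $x\in B$, both $x$ and $x+h$ lie in $2B$, hence $\tau_h(\varphi v)(x)=\varphi(x+h)v(x+h)-\varphi(x)v(x)=v(x+h)-v(x)=\tau_h v(x)$. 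Therefore, restricting the $h$-integral to $B(0,r_B)$,
\begin{equation*}
\biggl\Vert \frac{\tau_h v}{|h|^\alpha}\biggr\Vert_{L^q(\frac{dh}{|h|^n};L^p(B))}
=\biggl\Vert \frac{\tau_h(\varphi v)}{|h|^\alpha}\biggr\Vert_{L^q(\frac{dh}{|h|^n};L^p(B))}
\le [\varphi v]_{B^\alpha_{p,q}(\mathbb{R}^n)}<\infty,
\end{equation*}
with the obvious modification (supremum in $h$) when $q=\infty$. This is the easy direction.

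\textbf{Sufficiency.} Conversely assume the intrinsic seminorm is finite on every such ball $B$, and fix $\varphi\in\mathcal{C}^\infty_c(\Omega)$; set $K=\mathrm{supp}\,\varphi$. Since $K$ is compact in $\Omega$, one can cover $K$ by finitely many balls $B_1,\dots,B_N$ with $2B_i\subset\Omega$; it suffices to bound $[\varphi v]_{B^\alpha_{p,q}(\mathbb{R}^n)}$, the $L^p$ bound being immediate from $\varphi v\in L^p(\mathbb{R}^n)$ (which follows from $v\in L^p_{\mathrm{loc}}$). Using the equivalent norm obtained by truncating the $h$-integral to $|h|\le\delta$ for a small $\delta>0$ (chosen so that $\mathrm{dist}(K,\partial\Omega)>2\delta$ and $\delta<\min_i r_{B_i}$), write the product rule for finite differences,
\begin{equation*}
\tau_h(\varphi v)(x)=\varphi(x+h)\,\tau_h v(x)+v(x)\,\tau_h\varphi(x).
\end{equation*}
For the first term, $|\varphi(x+h)\tau_h v(x)|\le\Vert\varphi\Vert_\infty\,|\tau_h v(x)|$ and $x\mapsto\varphi(x+h)\tau_h v(x)$ is supported in the $\delta$-neighborhood of $K$, which is covered by the $B_i$; summing the finitely many intrinsic seminorms over the $B_i$ (and using $\sum_i\Vert f_i\Vert_{L^p}\ge c\Vert\sum_i f_i\Vert_{L^p}$ on overlapping patches, or a partition of unity) controls this piece by $\Vert\varphi\Vert_\infty\sum_i\bigl\Vert|h|^{-\alpha}\tau_h v\bigr\Vert_{L^q(\frac{dh}{|h|^n};L^p(B_i))}<\infty$. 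For the second term, $|\tau_h\varphi(x)|\le\Vert D\varphi\Vert_\infty|h|$, so $|h|^{-\alpha}|v(x)\tau_h\varphi(x)|\le\Vert D\varphi\Vert_\infty|h|^{1-\alpha}|v(x)|$, which is supported in $K$; since $1-\alpha>0$ and $v\in L^p(K)$, the function $h\mapsto|h|^{1-\alpha}\Vert v\Vert_{L^p(K)}$ lies in $L^q(\tfrac{dh}{|h|^n})$ on $B(0,\delta)$ because $\int_{B(0,\delta)}|h|^{(1-\alpha)q}\tfrac{dh}{|h|^n}\sim\int_0^\delta\rho^{(1-\alpha)q-1}\,d\rho<\infty$ (and the case $q=\infty$ is even simpler). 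Combining the two pieces gives $[\varphi v]_{B^\alpha_{p,q}(\mathbb{R}^n)}<\infty$, hence $\varphi v\in B^\alpha_{p,q}(\mathbb{R}^n)$, as desired.

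\textbf{Main obstacle.} The only genuinely delicate point is the passage from the local intrinsic seminorms on the covering balls $B_i$ to a single global seminorm: one must handle overlaps of the $B_i$ and the fact that the support of the relevant difference is a $\delta$-neighborhood of $K$ rather than $K$ itself. This is resolved cleanly by enlarging the covering slightly (taking $2\delta$-neighborhoods whose doubles still sit in $\Omega$) and invoking finiteness of the cover together with the truncation of the $h$-measure to $B(0,r_B)$ supplied by the hypothesis; everything else is a routine application of the triangle inequality, the Leibniz rule for finite differences, and the elementary integrability $\int_0^\delta\rho^{(1-\alpha)q-1}\,d\rho<\infty$.
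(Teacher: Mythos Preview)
The paper does not supply a proof of this lemma; it simply records the statement and cites \cite{baison.clop2017}. Your argument is therefore not competing against anything in the paper, and it is correct as a self-contained proof. The two directions are handled exactly as one would expect: the necessity is immediate by choosing a cut-off equal to $1$ on $2B$, and the sufficiency follows from the Leibniz rule for finite differences together with the truncation of the $h$-integral to a small ball, the Lipschitz bound $|\tau_h\varphi|\le\Vert D\varphi\Vert_\infty|h|$, and the elementary integrability $\int_0^\delta\rho^{(1-\alpha)q-1}\,d\rho<\infty$.

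One small correction: in the sufficiency step you write that the second piece $v(x)\,\tau_h\varphi(x)$ is supported in $K$, but in fact $\tau_h\varphi(x)=\varphi(x+h)-\varphi(x)$ vanishes only when both $x$ and $x+h$ lie outside $K$, so the support is contained in $K\cup(K-h)\subset K^\delta$. This is harmless, since you already arrange $K^\delta\Subset\Omega$ and $v\in L^p(K^\delta)$; you essentially anticipate the fix in your ``Main obstacle'' paragraph. With that adjustment the proof is complete.
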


It is known that Besov-Lipschitz spaces of fractional order $\alpha \in (0,1)$ can be characterized in pointwise terms. Given a measurable function $v:\mathbb{R}^{n} \rightarrow \mathbb{R}$, a \textit{fractional $\alpha$-Hajlasz gradient for $v$} is a sequence $\{g_{k}\}_{k}$ of measurable, non-negative functions $g_{k}:\mathbb{R}^{n} \rightarrow \mathbb{R}$, together with a null set $N\subset\mathbb{R}^{n}$, such that the inequality 
\begin{center}
$|v(x)-v(y)|\leq (g_{k}(x)+g_{k}(y))|x-y|^{\alpha}$
\end{center} 
holds whenever $k \in \mathbb{Z}$ and $x,y \in \mathbb{R}^{n}\setminus N$ are such that $2^{-k} \leq|x-y|<2^{-k+1}$. We say that $\{g_{k}\}_{k} \in l^{q}(\mathbb{Z};L^{p}(\mathbb{R}^{n}))$ if
\begin{center}
$\Vert \{g_{k}\}_{k} \Vert_{l^{q}(L^{p})}=\biggl(\displaystyle\sum_{k \in \mathbb{Z}}\Vert g_{k} \Vert^{q}_{L^{p}(\mathbb{R}^{n})} \biggr)^{\frac{1}{q}}<  \infty.$
\end{center} 

The following result was proved in \cite{koskela}.
\begin{thm}
Let $0< \alpha <1,$ $1 \leq p < \infty$ and $1\leq q \leq \infty $. Let $v \in L^{p}(\mathbb{R}^{n})$. One has $v \in B^{\alpha}_{p,q}(\mathbb{R}^{n})$ if, and only if, there exists a fractional $\alpha$-Hajlasz gardient $\{g_{k}\}_{k} \in l^{q}(\mathbb{Z};L^{p}(\mathbb{R}^{n}))$ for $v$. Moreover,
\begin{center}
$\Vert v \Vert_{B^{\alpha}_{p,q}(\mathbb{R}^{n})}\simeq \inf \Vert \{g_{k}\}_{k} \Vert_{l^{q}(L^{p})},$
\end{center}
where the infimum runs over all possible fractional $\alpha$-Hajlasz gradients for $v$.
\end{thm}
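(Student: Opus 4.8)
The plan is to prove the two implications separately and to read off the comparison of (quasi-)norms from the two constructions. Throughout I write $\tau_h v(x)=v(x+h)-v(x)$ and decompose $\mathbb{R}^n\setminus\{0\}=\bigcup_{k\in\mathbb{Z}}A_k$, where $A_k=\{h\in\mathbb{R}^n:2^{-k}\le|h|<2^{-k+1}\}$; a computation in polar coordinates shows that $\int_{A_k}|h|^{-n}\,dh$ equals a dimensional constant $c_n$ \emph{independent of $k$}, and this is exactly what permits passing between $h$-integrals and sums over dyadic scales. The case $q=\infty$ is handled in the same way, with every sum over $k$ replaced by a supremum, so I only comment on $q<\infty$.

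For the implication ``Hajlasz gradient $\Rightarrow$ Besov'', let $\{g_k\}_k\in\ell^q(\mathbb{Z};L^p(\mathbb{R}^n))$ be a fractional $\alpha$-Hajlasz gradient for $v$. Given $h\in A_k$, the defining inequality applied to the pair $(x,x+h)$ (which satisfies $2^{-k}\le|x-(x+h)|<2^{-k+1}$) gives $|\tau_h v(x)|\le(g_k(x)+g_k(x+h))|h|^\alpha$ for a.e.\ $x$, hence $|\tau_h v(x)|^p|h|^{-\alpha p}\le 2^{p-1}(g_k(x)^p+g_k(x+h)^p)$. Integrating in $x$ and using translation invariance of Lebesgue measure yields $\int_{\mathbb{R}^n}|\tau_h v(x)|^p|h|^{-\alpha p}\,dx\le 2^p\Vert g_k\Vert_{L^p}^p$ for every $h\in A_k$; raising to the power $q/p$, integrating over $A_k$ against $|h|^{-n}\,dh$ and summing over $k$ produces $[v]_{B^\alpha_{p,q}(\mathbb{R}^n)}\le 2\,c_n^{1/q}\Vert\{g_k\}\Vert_{\ell^q(L^p)}$. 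Since $v\in L^p(\mathbb{R}^n)$ by hypothesis, this gives $v\in B^\alpha_{p,q}(\mathbb{R}^n)$ together with one half of the norm comparison.

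For the converse, let $v\in B^\alpha_{p,q}(\mathbb{R}^n)$, set $a_j(x)=2^{j\alpha}\fint_{B(x,2^{-j})}|v(y)-v_{B(x,2^{-j})}|\,dy$ for $j\in\mathbb{Z}$, and define $g_k(x)=C\sum_{i\ge-3}2^{-i\alpha}a_{k+i}(x)$ with $C$ a large dimensional constant. First, $g_k\in L^p(\mathbb{R}^n)$ and is thus finite a.e., because $B^\alpha_{p,q}\subset B^\alpha_{p,\infty}$ (cf.\ Lemma \ref{3.2}) forces $\sup_j\Vert a_j\Vert_{L^p}<\infty$ while $\sum_{i\ge-3}2^{-i\alpha}<\infty$ since $\alpha>0$. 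To see that $\{g_k\}_k$ is a fractional $\alpha$-Hajlasz gradient for $v$, one uses at a Lebesgue point $x$ the telescoping identity $v(x)-v_{B(x,2^{-k})}=\sum_{i\ge0}\bigl(v_{B(x,2^{-k-i-1})}-v_{B(x,2^{-k-i})}\bigr)$, each term being bounded by $2^n\fint_{B(x,2^{-k-i})}|v-v_{B(x,2^{-k-i})}|=2^n\,2^{-(k+i)\alpha}a_{k+i}(x)$; then for $x,y$ with $2^{-k}\le|x-y|<2^{-k+1}$ one compares $v(x)$ and $v(y)$ through the averages of $v$ over the comparable balls $B(x,2^{-k+2})$ and $B(y,2^{-k+3})$, each of which contains both points, absorbing the discrepancy of the two centering constants into one more oscillation average; collecting the terms gives exactly $|v(x)-v(y)|\le(g_k(x)+g_k(y))|x-y|^\alpha$ off a null set independent of $k$. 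It remains to bound $\Vert\{g_k\}\Vert_{\ell^q(L^p)}$: since $g_k$ is the discrete convolution of $\{a_j\}_j$ with the kernel $\{2^{-i\alpha}\}_{i\ge-3}\in\ell^1(\mathbb{Z})$, Young's inequality on $\mathbb{Z}$ reduces the task to bounding $\bigl\Vert\{\Vert a_j\Vert_{L^p}\}_j\bigr\Vert_{\ell^q}$; using $\fint_B|v-v_B|\le 2\fint_B|v(y)-v(x)|\,dy$ at Lebesgue points together with Minkowski's integral inequality gives $\Vert a_j\Vert_{L^p}\le 2\cdot2^{j\alpha}\fint_{B(0,2^{-j})}\Vert\tau_h v\Vert_{L^p}\,dh$, and splitting $B(0,2^{-j})$ into the shells $A_{j+m}$ ($m\ge1$), applying Hölder on each shell with respect to $|h|^{-n}\,dh$, and summing via one more discrete Young inequality yields $\bigl\Vert\{\Vert a_j\Vert_{L^p}\}_j\bigr\Vert_{\ell^q}\lesssim[v]_{B^\alpha_{p,q}(\mathbb{R}^n)}$. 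Combining the two directions and taking the infimum over Hajlasz gradients gives the asserted equivalence, with the $\Vert v\Vert_{L^p}$ term carried on both sides.

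I expect the converse implication to be the main obstacle. The delicate point is that the pointwise Hajlasz inequality at scale $\sim2^{-k}$ must hold for \emph{every} admissible pair $(x,y)$, which forces $g_k$ to aggregate the oscillation averages $a_{k+i}$ over all nearby scales rather than just the scale $k$; one must then check that this aggregation does not spoil the $\ell^q(L^p)$ bound — which is precisely where the geometric summability $\sum_i2^{-i\alpha}<\infty$ (valid because $\alpha>0$) and Young's inequality for sequences enter — and that all the telescoping and Lebesgue-point arguments can be run on a single null set valid simultaneously for every $k$.
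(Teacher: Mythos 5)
This theorem is not proved in the paper: it is stated with an explicit citation to Koskela, Yang and Zhou \cite{koskela}, so there is no in-paper proof to compare against. Your sketch nonetheless appears essentially correct and follows the standard route used for Hajlasz-type pointwise characterizations of Besov spaces: the easy direction (Hajlasz $\Rightarrow$ Besov) by direct integration over dyadic shells $A_k$ using the scale-invariance $\int_{A_k}|h|^{-n}\,dh=c_n$, and the converse by building $g_k$ from dyadic oscillation averages $a_j$, verifying the pointwise bound through telescoping and ball-comparison at Lebesgue points, and controlling $\ell^q(L^p)$ via discrete convolution with the summable kernel $\{2^{-i\alpha}\}_{i\ge-3}$ together with H\"older on each shell. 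The steps you flag as delicate (aggregating scales into $g_k$, a single null set uniform in $k$, and the $\ell^1$-kernel Young argument) are exactly where the real content lies, and your treatment of them is right in spirit; a fully rigorous write-up would just need the ball-comparison and the H\"older-plus-Young computation spelled out explicitly, as in \cite{koskela}.
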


\subsection{Difference quotient}
\label{secquo}
We recall some properties of the finite difference quotient operator that will be needed in the sequel. Let us recall that, for every function $F:\mathbb{R}^{n}\rightarrow \mathbb{R}$ the finite difference operator is defined by
\begin{center}
$\tau_{s,h}F(x)=F(x+he_{s})-F(x)$
\end{center}
where $h \in \mathbb{R}^{n}$, $e_{s}$ is the unit vector in the $x_{s}$ direction and $s \in \{1,...,n\}$.
\\We start with the description of some elementary properties that can be found, for example, in \cite{giusti}.
\begin{prop}
Let $F$ and $G$ be two functions such that $F,G \in W^{1,p}(\Omega)$, with $p \geq1$, and let us consider the set
\begin{center}
$\Omega_{|h|} = \{ x \in \Omega : \text{dist}(x,\partial \Omega)> |h|  \}$.
\end{center}
Then
\\(i) $\tau_{h}F \in W^{1,p}(\Omega_{|h|})$ and 
\begin{center}
$D_{i}(\tau_{h}F)=\tau_{h}(D_{i}F)$.
\end{center}
(ii) If at least one of the functions $F$ or $G$ has support contained in $\Omega_{|h|}$, then
\begin{center}
$$\displaystyle\int_{\Omega}F \tau_h G dx = \displaystyle\int_{\Omega} G \tau_{-h}F dx.$$
\end{center}
(iii) We have $$\tau_h (FG)(x)= F(x+h)\tau_h G(x)+G(x) \tau_h F(x).$$
\end{prop}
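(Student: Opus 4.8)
The plan is to dispatch the three assertions in order; each reduces to an elementary manipulation once the domain bookkeeping is in place. The geometric fact used throughout is that $x\in\Omega_{|h|}$ forces $\overline{B(x,|h|)}\subset\Omega$, hence $x\pm h\in\Omega$, so that the translates $F(\cdot\pm h)$ and $G(\cdot\pm h)$ are well defined a.e.\ on $\Omega_{|h|}$ (and on $\Omega_{|h|}\pm h\subset\Omega$).

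For (i) I would first record that translation preserves the Sobolev class. Fixing $\varphi\in C^\infty_c(\Omega_{|h|})$, the shifted test function $\varphi(\cdot-h)$ lies in $C^\infty_c(\Omega_{|h|}+h)\subset C^\infty_c(\Omega)$, so the definition of the weak derivative of $F$ combined with the change of variables $y=x+h$ — which leaves Lebesgue measure invariant — yields $\int_{\Omega_{|h|}} F(x+h)\,\partial_i\varphi(x)\,dx=-\int_{\Omega_{|h|}} (D_iF)(x+h)\,\varphi(x)\,dx$. Hence $F(\cdot+h)\in W^{1,p}(\Omega_{|h|})$ with $D_i[F(\cdot+h)]=(D_iF)(\cdot+h)$, the $L^p$ membership being again immediate by translation invariance. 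Since $\tau_hF=F(\cdot+h)-F$ is a difference of two functions in $W^{1,p}(\Omega_{|h|})$, linearity of the weak derivative gives $\tau_hF\in W^{1,p}(\Omega_{|h|})$ and $D_i(\tau_hF)=(D_iF)(\cdot+h)-D_iF=\tau_h(D_iF)$.

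For (ii) I would extend $F$ and $G$ by zero outside $\Omega$ and split $\int_\Omega F\,\tau_hG\,dx=\int F(x)G(x+h)\,dx-\int F(x)G(x)\,dx$. On the first integral the substitution $x\mapsto x-h$ — legitimate on all of $\R^n$ after the zero extension, once more by translation invariance of the measure — produces $\int F(x-h)G(x)\,dx$; the support hypothesis on $F$ or on $G$ is exactly what ensures that the integrand remains supported inside $\Omega$, so that the zero extension introduces nothing and every integral agrees with the corresponding one over $\Omega$. Recombining and using $\tau_{-h}F(x)=F(x-h)-F(x)$, one gets $\int_\Omega F\,\tau_hG\,dx=\int_\Omega G(x)\big(F(x-h)-F(x)\big)\,dx=\int_\Omega G\,\tau_{-h}F\,dx$. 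Finally, (iii) is the purely algebraic identity obtained by inserting and removing the term $F(x+h)G(x)$ inside $F(x+h)G(x+h)-F(x)G(x)$.

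The only mildly delicate point is the accounting in (i) and (ii): one must keep track of the fact that the translated functions are to be evaluated only at points of $\Omega$, and, in (ii), that the support condition is precisely what prevents boundary terms from surviving when the change of variables transports mass across $\partial\Omega$. Beyond that, every step is a direct computation, so I anticipate no real obstacle.
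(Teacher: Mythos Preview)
Your argument is correct and is the standard one. The paper does not actually supply a proof of this proposition; it merely states these properties as well known and refers the reader to \cite{giusti}, so there is no ``paper's own proof'' to compare against beyond noting that your write-up is precisely the elementary verification one finds in that reference.
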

The next result about finite difference operator is a kind of integral version of Lagrange Theorem.
\begin{lem}\label{ldiff}
If $0<\rho<R,$ $|h|<\frac{R-\rho}{2},$ $1<p<+\infty$ and $F,\ DF \in L^{p}(B_{R})$, then
\begin{center}
$\displaystyle\int_{B_{\rho}} |\tau_{h}F(x)|^{p} dx \leq c(n,p)|h|^{p} \displaystyle\int_{B_{R}} |DF(x)|^{p}dx$.
\end{center}
Moreover,
\begin{center}
$\displaystyle\int_{B_{\rho}} |F(x+h)|^{p} dx \leq  \displaystyle\int_{B_{R}} |F(x)|^{p}dx$.
\end{center}
\end{lem}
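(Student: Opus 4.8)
The plan is to establish the first estimate for smooth functions by the fundamental theorem of calculus, then recover the general case by mollification; the second (\emph{moreover}) inequality is just a change of variables and needs no regularity at all.

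First I note that $F,\,DF\in L^{p}(B_{R})$ means $F\in W^{1,p}(B_{R})$. Since $|h|<\frac{R-\rho}{2}$ we have $\rho+|h|<\frac{R+\rho}{2}<R$, so we may fix an intermediate radius $R'$ with $\rho+|h|<R'<R$. Let $F_{\varepsilon}:=F*\phi_{\varepsilon}$ be a standard mollification; for $\varepsilon$ small it is smooth on $\overline{B_{R'}}$ and $F_{\varepsilon}\to F$, $DF_{\varepsilon}\to DF$ in $L^{p}(B_{R'})$. For $x\in B_{\rho}$ the whole segment $\{x+th : t\in[0,1]\}$ lies in $B_{R'}$, so
$$\tau_{h}F_{\varepsilon}(x)=F_{\varepsilon}(x+h)-F_{\varepsilon}(x)=\int_{0}^{1}\langle DF_{\varepsilon}(x+th),h\rangle\,dt,$$
and Jensen's inequality applied to the average over $t\in[0,1]$ gives $|\tau_{h}F_{\varepsilon}(x)|^{p}\leq|h|^{p}\int_{0}^{1}|DF_{\varepsilon}(x+th)|^{p}\,dt$. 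Integrating in $x$ over $B_{\rho}$, using Fubini's theorem and then, for each fixed $t$, the translation $y=x+th$ (which sends $B_{\rho}$ into $B_{\rho+|h|}\subset B_{R'}$), we obtain
$$\int_{B_{\rho}}|\tau_{h}F_{\varepsilon}(x)|^{p}\,dx\leq|h|^{p}\int_{0}^{1}\int_{B_{R'}}|DF_{\varepsilon}(y)|^{p}\,dy\,dt=|h|^{p}\int_{B_{R'}}|DF_{\varepsilon}(y)|^{p}\,dy.$$

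It remains to let $\varepsilon\to0$. By the $L^{p}(B_{R'})$ convergence of $F_{\varepsilon}$ one has $\tau_{h}F_{\varepsilon}\to\tau_{h}F$ in $L^{p}(B_{\rho})$ (the translated term converges after a change of variables, since $B_{\rho}+h\subset B_{R'}$), and $DF_{\varepsilon}\to DF$ in $L^{p}(B_{R'})$; hence both sides pass to the limit and
$$\int_{B_{\rho}}|\tau_{h}F(x)|^{p}\,dx\leq|h|^{p}\int_{B_{R'}}|DF(y)|^{p}\,dy\leq|h|^{p}\int_{B_{R}}|DF(y)|^{p}\,dy,$$
which is the claimed inequality (in fact with $c(n,p)=1$). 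Finally, the change of variables $y=x+h$ gives directly $\int_{B_{\rho}}|F(x+h)|^{p}\,dx=\int_{B_{\rho}+h}|F(y)|^{p}\,dy\leq\int_{B_{R}}|F(y)|^{p}\,dy$, because $B_{\rho}+h\subset B_{\rho+|h|}\subset B_{R}$. I do not expect any genuine obstacle here: the only points to watch are the nesting of the balls (guaranteed by $|h|<\frac{R-\rho}{2}$) and the elementary limit $\varepsilon\to0$, which could equivalently be replaced by invoking the density of $C^{\infty}(\overline{B_{R'}})$ in $W^{1,p}(B_{R'})$.
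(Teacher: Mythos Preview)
Your proof is correct and entirely standard; in fact you obtain the sharp constant $c(n,p)=1$. Note, however, that the paper does not supply its own proof of this lemma: it is stated there as a known ``integral version of Lagrange Theorem'' and attributed to \cite{giusti}, so there is nothing in the paper to compare your argument against.
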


We conclude this subsection recalling the following Lemma (see \cite{kristensen.mingione}), which can be seen as a consequence of Lemmas \ref{3.1} and \ref{3.2}.
\begin{lem}\label{Besovlem}
Let $F \in L^2(B_R)$. Suppose that there exist $\rho \in (0,R)$, $0 < \alpha < 1$ and $M >0$ such that $$ \displaystyle\sum_{s=1}^{n} \displaystyle\int_{B_{\rho}}|\tau_{s,h}F(x)|^2 dx \leq M^2 |h|^{2 \alpha}, $$ for every $h$ such that $h < \frac{R-\rho}{2}$. Then $F \in L^{\frac{2n}{n-2 \beta}}(B_{\rho})$ for every $\beta \in (0, \alpha)$ and $$\Vert  F \Vert _{L^{\frac{2n}{n-2 \beta}}(B_{\rho})} \leq c (M+ \Vert F \Vert _{L^2(B_R)}),  $$ with $c=c(n,N,R,\rho, \alpha, \beta)$.
\end{lem}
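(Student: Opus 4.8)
\emph{Overview.} The plan is to read the hypothesis as a finite-difference characterization of a local Besov bound and then feed it into the Besov--Sobolev embedding of Lemma \ref{3.1}; the only genuine subtlety will be a small, unavoidable loss of smoothness caused by the endpoint third index $q=\infty$. Fix $\beta\in(0,\alpha)$ and set $2^{*}:=\frac{2n}{n-2\beta}$; since $n\ge 2$ and $0<\beta<1$ one has $2<\frac n\beta$ and $2^{*}\ge 2$, so the triple $(p,\beta,q)=(2,\beta,2^{*})$ meets the hypotheses of Lemma \ref{3.1}(a). It is convenient to work with an auxiliary radius $\rho'\in(0,\rho)$ and to prove the estimate on $B_{\rho'}$: the finitely many infinitesimal shrinkings of the radius performed below are all comparable to the range of $|h|$, hence harmless, and the stated form on $B_\rho$ is then recovered by the relabeling $\rho'\to\rho$ (this is exactly the role played by the margin $|h|<\frac{R-\rho}{2}$ in the hypothesis).

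\emph{Step 1 (from coordinate directions to all directions).} The hypothesis only controls the differences $\tau_{s,h}F$ along the axes, whereas the Besov seminorm requires $\tau_hF(x)=F(x+h)-F(x)$ for an arbitrary $h\in\R^n$. I would bridge this by telescoping: writing $h=\sum_{s=1}^n h_se_s$,
\[
\tau_hF(x)=\sum_{s=1}^n\bigl[\,F(x+h_1e_1+\dots+h_se_s)-F(x+h_1e_1+\dots+h_{s-1}e_{s-1})\,\bigr],
\]
where each bracket is a coordinate difference of step $|h_s|\le|h|$ of a fixed translate of $F$. Using $|a+b|^2\le 2|a|^2+2|b|^2$ repeatedly, the translation invariance of Lebesgue measure, and the hypothesis — which is licit because for $|h|<\frac{R-\rho}{2}$ the finitely many translates occurring keep the integration region inside $B_R$ — one arrives at
\[
\int_{B_{\rho'}}|\tau_hF(x)|^2\,dx\le c(n)\,M^2\,|h|^{2\alpha},\qquad |h|<\tfrac{R-\rho}{2}.
\]

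\emph{Step 2 (cutoff, Besov inclusion, Sobolev embedding).} Next I would choose $\eta\in\mathcal C^\infty_c(\Omega)$ with $\eta\equiv 1$ on $B_{\rho'}$, supported in a compact subset of $B_\rho$, $0\le\eta\le 1$ and $|D\eta|\le\frac{c}{\rho-\rho'}$, and set $v:=\eta F$, extended by $0$ to $\R^n$; then $v\in L^2(\R^n)$ with $\|v\|_{L^2(\R^n)}\le\|F\|_{L^2(B_R)}$. From the identity $\tau_hv=\eta(\cdot+h)\,\tau_hF+F\,\tau_h\eta$, the bound of Step 1, the elementary inequality $|\tau_h\eta|\le\|D\eta\|_{L^\infty}|h|$, and the estimate for $|h|$ bounded away from $0$ recalled in Subsection \ref{secbesov}, one checks that $v\in B^\alpha_{2,\infty}(\R^n)$ with
\[
\|v\|_{B^\alpha_{2,\infty}(\R^n)}\le c\bigl(M+\|F\|_{L^2(B_R)}\bigr),\qquad c=c(n,R,\rho,\rho',\alpha).
\]
Because $\beta<\alpha$ one has the continuous inclusion $B^\alpha_{2,\infty}(\R^n)\subset B^\beta_{2,2^{*}}(\R^n)$: indeed, splitting $\int_{\R^n}\bigl(|h|^{-\beta}\|\tau_hv\|_{L^2}\bigr)^{2^{*}}\frac{dh}{|h|^n}$ over $\{|h|\le 1\}$, where $\|\tau_hv\|_{L^2}\le c\,|h|^\alpha$ and the exponent $(\alpha-\beta)2^{*}-n>-n$ forces convergence, and over $\{|h|>1\}$, where $\|\tau_hv\|_{L^2}\le c\|v\|_{L^2}$, gives $\|v\|_{B^\beta_{2,2^{*}}(\R^n)}\le c(n,\alpha,\beta)\|v\|_{B^\alpha_{2,\infty}(\R^n)}$ (cf. Lemma \ref{3.2}). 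Finally Lemma \ref{3.1}(a), applied with $p=2$ and differentiability exponent $\beta$, yields $B^\beta_{2,2^{*}}(\R^n)\subset L^{2^{*}}(\R^n)$. Chaining these inclusions with the restriction to $B_{\rho'}$, where $v=F$,
\[
\|F\|_{L^{2n/(n-2\beta)}(B_{\rho'})}\le\|v\|_{L^{2^{*}}(\R^n)}\le c\,\|v\|_{B^\alpha_{2,\infty}(\R^n)}\le c\bigl(M+\|F\|_{L^2(B_R)}\bigr),
\]
and, after the relabeling $\rho'\to\rho$, this is the assertion.

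\emph{Main obstacle.} The delicate part is Step 1: the hypothesis lives on $B_\rho$ and only along coordinate directions, while the Besov seminorm needs differences in every direction, and the telescoping that produces them drifts the base points slightly outside $B_\rho$ — controlling this drift is precisely what the margin $|h|<\frac{R-\rho}{2}$ (and the interior cutoff of Step 2) are for, and this bookkeeping of radii is the main source of friction. The loss from $\alpha$ to an arbitrary $\beta<\alpha$ in the exponent is not an artifact of the proof: $B^\alpha_{2,\infty}$ sits at the excluded endpoint $q=\infty$ of the Besov--Sobolev embedding in Lemma \ref{3.1}(a), so one is forced to give up an arbitrarily small amount of differentiability via the inclusion of Step 2. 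Everything else is routine tracking of constants.
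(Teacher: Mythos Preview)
The paper does not actually supply a proof of this lemma: it is only cited from \cite{kristensen.mingione} with the remark that it ``can be seen as a consequence of Lemmas \ref{3.1} and \ref{3.2}''. Your proposal fills in precisely that outline --- coordinate-to-arbitrary-direction telescoping, localization by cutoff to land in $B^\alpha_{2,\infty}(\R^n)$, the inclusion $B^\alpha_{2,\infty}\subset B^\beta_{2,2^*}$ of Lemma \ref{3.2} type (which you also verify by the direct split over $\{|h|\le 1\}$ and $\{|h|>1\}$), and then Lemma \ref{3.1}(a) --- so the approach is exactly the one the paper points to, and the argument is correct. The bookkeeping of radii you flag as the main obstacle is indeed the only delicate point, and your handling via an auxiliary $\rho'<\rho$ together with the margin $|h|<\frac{R-\rho}{2}$ is the standard and adequate way to absorb the drift induced by the telescoping and the cutoff.
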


\subsection{Preliminary results on standard growth conditions}
For sake of clarity, we would like to recall the following regularity results (see \cite{eleuteri.passarelli2018} for the proof), which will be used in order to prove Theorems \ref{mainthm1} and \ref{mainthm2}.

\begin{thm}\label{1thmp=q}
Assume that $\mathcal{A}(x,\xi)$ satisfies (A1)-(A3) for an exponent $2 \leq p=q < \frac{n}{\alpha}$ and let $u \in \mathcal{K}_{\psi}(\Omega)$ be the solution to the obstacle problem \eqref{1}. If there exists a sequence of measurable non-negative functions $g_k \in L^{\frac{n}{\alpha}}_{\text{loc}}(\Omega)$ such that
$$\displaystyle\sum_{k=1}^{\infty} \Vert g_k \Vert^{\sigma}_{L^{\frac{n}{\alpha}}(\Omega)} < \infty,$$
and at the same time
$$|\mathcal{A}(x,\xi)-\mathcal{A}(y, \xi)| \leq |x-y|^{\alpha} (g_k(x)+g_k(y)) (\mu^2 +|\xi|^2)^{\frac{p-1}{2}} ,$$
for a.e. $x,y \in \Omega$ such that $2^{-k} \text{diam}(\Omega) \leq |x-y| < 2^{-k+1}\text{diam}(\Omega)$ and for every $\xi \in \mathbb{R}^n$, then the following implication
$$  D\psi \in B^\gamma_{p,\sigma,\textrm{loc}}(\Omega)\Rightarrow (\mu^2 +|Du|^2)^\frac{p-2}{4} Du \in B^{\min\{ \alpha, \gamma \}}_{2,\sigma,\textrm{loc}}(\Omega),$$
holds, provided $\sigma \leq p_{\gamma}^*= \frac{np}{n-\gamma p}$.
\end{thm}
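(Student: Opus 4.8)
The plan is to run the difference--quotient method, carefully adapted to the presence of the unilateral constraint, and then to read off the Besov regularity through the Hajlasz--gradient characterization recalled above. I would fix concentric balls $B_{\rho}\subset B_{R}\Subset\Omega$, a cut--off $\eta\in C^{\infty}_{c}(B_{R})$ with $0\le\eta\le1$ and $\eta\equiv1$ on $B_{\rho}$, and $h\in\R^{n}$ with $|h|$ small with respect to $\mathrm{dist}(\mathrm{supp}\,\eta,\partial\Omega)$, and set $w:=u-\psi\ge0$. The genuinely obstacle--specific step is the choice of test function: since $0\le\eta\le1$ and $w\ge0$, for every $t\in(0,\tfrac12]$ the competitor
\begin{equation*}
\varphi:=u-t\,\tau_{-h}\bigl(\eta^{2}\,\tau_{h}w\bigr)
\end{equation*}
belongs to $\mathcal{K}_{\psi}(\Omega)$; indeed, expanding the double difference,
\begin{equation*}
\varphi-\psi=\Bigl(1-t\bigl[\eta^{2}(x-h)+\eta^{2}(x)\bigr]\Bigr)w(x)+t\,\eta^{2}(x-h)\,w(x-h)+t\,\eta^{2}(x)\,w(x+h)\ \ge\ 0 ,
\end{equation*}
the coefficient of $w(x)$ being $\ge1-2t\ge0$. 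Inserting $\varphi$ into \eqref{1}, dividing by $-t<0$, using $D\tau_{-h}=\tau_{-h}D$ together with the integration--by--parts rule~(ii) for finite differences, and recalling $\tau_{h}w=\tau_{h}u-\tau_{h}\psi$, I would reach the Caccioppoli--type inequality
\begin{equation*}
\io\eta^{2}\,\langle\tau_{h}\A(x,Du),\tau_{h}Du\rangle\,dx\le\io\bigl\langle\tau_{h}\A(x,Du),\ \eta^{2}\,\tau_{h}D\psi-2\eta\,D\eta\,\tau_{h}(u-\psi)\bigr\rangle\,dx ,
\end{equation*}
so that the whole effect of the obstacle is now confined to the term carrying $\tau_{h}D\psi$.

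Next I would split $\tau_{h}\A(x,Du)=\mathcal D_{1}+\mathcal D_{2}$, with $\mathcal D_{1}:=\A(x+h,Du(x+h))-\A(x+h,Du(x))$ and $\mathcal D_{2}:=\A(x+h,Du(x))-\A(x,Du(x))$. By (A2) and Lemma \ref{D1},
\begin{equation*}
\io\eta^{2}\,\langle\mathcal D_{1},\tau_{h}Du\rangle\,dx\ \ge\ c\,\nu\io\eta^{2}\,\Bigl|\tau_{h}\bigl((\mu^{2}+|Du|^{2})^{\frac{p-2}{4}}Du\bigr)\Bigr|^{2}\,dx=:c\,\nu\,\mathcal G ,
\end{equation*}
and $\mathcal G$ is the quantity I want to bound from above. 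The remaining integrals I would estimate by Young's inequality together with (A1), (A3), the Hajlasz--type coefficient hypothesis and Lemma \ref{ldiff} for the pieces carrying $D\eta$ (which gain a factor $|h|$); the resulting $\varepsilon\mathcal G$--contributions get absorbed, and the terms with a factor $(s-r)^{-2}$ from $|D\eta|^{2}$ are removed by the iteration Lemma \ref{lm2} on a pair of intermediate radii. In the obstacle term I would use (A3) on the $\mathcal D_{1}$--part and Young's inequality to land on $\io\eta^{2}(\mu^{2}+|Du|^{2}+|Du(\cdot+h)|^{2})^{\frac{p-2}{2}}|\tau_{h}D\psi|^{2}\,dx$, which by H\"older is $\lesssim\|\tau_{h}D\psi\|_{L^{p}}^{2}\lesssim|h|^{2\gamma}$ once $D\psi\in B^{\gamma}_{p,\sigma,\mathrm{loc}}(\Omega)$; the restriction $\sigma\le p^{*}_{\gamma}$ ensures, via Lemma \ref{3.1}(a), that $D\psi\in L^{p^{*}_{\gamma}}_{\mathrm{loc}}$, the integrability of the obstacle datum that keeps the various H\"older exponents consistent.

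The delicate term is the one involving $\mathcal D_{2}$. For $|h|\simeq 2^{-k}$ the Hajlasz--type hypothesis gives $|\mathcal D_{2}|\le|h|^{\alpha}(g_{k}(x)+g_{k}(x+h))(\mu^{2}+|Du(x)|^{2})^{\frac{p-1}{2}}$; writing $(\mu^{2}+|Du(x)|^{2})^{\frac{p-1}{2}}=(\mu^{2}+|Du(x)|^{2})^{\frac{p}{4}}(\mu^{2}+|Du(x)|^{2})^{\frac{p-2}{4}}$ and using Young's inequality, its contribution (together with that of the analogous pairing of $\mathcal D_{2}$ with $\tau_{h}D\psi$) is bounded by $\varepsilon\mathcal G$ plus $c_{\varepsilon}|h|^{2\min\{\alpha,\gamma\}}\io\eta^{2}\bigl(g_{k}^{2}(x)+g_{k}^{2}(x+h)\bigr)(\mu^{2}+|Du|^{2})^{\frac{p}{2}}\,dx$ (plus an $|h|^{2\gamma}$--term already handled). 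Since $g_{k}\in L^{n/\alpha}$ and $n/\alpha>n$, H\"older now forces $(\mu^{2}+|Du|^{2})^{\frac{p-2}{4}}Du\in L^{\,2n/(n-2\alpha)}_{\mathrm{loc}}$, which is not available a priori; I would obtain it by a bootstrap, running the whole scheme first with a smaller smoothness exponent in place of $\alpha$ — starting from the higher integrability of $Du$ that follows (by a Gehring--type argument for the obstacle problem) from $D\psi\in L^{p^{*}_{\gamma}}_{\mathrm{loc}}$ — then upgrading, via Lemma \ref{Besovlem}, the integrability of $(\mu^{2}+|Du|^{2})^{\frac{p-2}{4}}Du$ finitely many times, the condition $p<\tfrac{n}{\alpha}$ keeping all exponents finite. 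Once this is in place the estimate closes, and collecting everything together with Lemma \ref{lm2} yields, for every $B_{\rho}$ and $|h|\simeq 2^{-k}$,
\begin{equation*}
\int_{B_{\rho}}\Bigl|\tau_{h}\bigl((\mu^{2}+|Du|^{2})^{\frac{p-2}{4}}Du\bigr)\Bigr|^{2}\,dx\ \le\ c\,|h|^{2\min\{\alpha,\gamma\}}\Bigl(1+\|g_{k}\|_{L^{n/\alpha}}^{2}+|h|^{-2\gamma}\|\tau_{h}D\psi\|_{L^{p}}^{2}\Bigr).
\end{equation*}
Because $\{\|g_{k}\|_{L^{n/\alpha}}\}_{k}\in\ell^{\sigma}$ and $D\psi\in B^{\gamma}_{p,\sigma,\mathrm{loc}}\subset B^{\min\{\alpha,\gamma\}}_{p,\sigma,\mathrm{loc}}$ (cf. Lemma \ref{3.2}), the square roots of the right--hand sides form an $\ell^{\sigma}(L^{2}_{\mathrm{loc}})$ fractional $\min\{\alpha,\gamma\}$--Hajlasz gradient for $(\mu^{2}+|Du|^{2})^{\frac{p-2}{4}}Du$, so the characterization theorem (from \cite{koskela}) gives $(\mu^{2}+|Du|^{2})^{\frac{p-2}{4}}Du\in B^{\min\{\alpha,\gamma\}}_{2,\sigma,\mathrm{loc}}(\Omega)$, as desired.

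The hardest parts will be, first, the choice of admissible test function — it has to respect the constraint $\varphi\ge\psi$ while still encoding a full second difference of $u$ so as to produce the coercive term $\mathcal G$, and the sign bookkeeping in the expansion of $\tau_{-h}(\eta^{2}\tau_{h}w)$ is precisely what separates this from the corresponding partial differential equation — and, second, closing the $\mathcal D_{2}$--estimate at the borderline integrability $g_{k}\in L^{n/\alpha}$, which is possible only by feeding back the higher integrability produced by Lemma \ref{Besovlem}; it is this feedback, together with the embedding $B^{\gamma}_{p,\sigma}\hookrightarrow L^{p^{*}_{\gamma}}$, that accounts for the restriction $\sigma\le p^{*}_{\gamma}$.
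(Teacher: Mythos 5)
Your overall strategy is the right one, and it matches the lineage of the paper: the theorem itself is imported from \cite{eleuteri.passarelli2018} without proof, but the analogous and more general argument is carried out in Section~\ref{secthm2} for the $p,q$-case, and your test function, Caccioppoli inequality, decomposition $\tau_h\mathcal A(x,Du)=\mathcal D_1+\mathcal D_2$, use of (A2) and Lemma~\ref{D1} for the coercive term, and H\"older with $g_k\in L^{n/\alpha}$ for the coefficient term all coincide with what the paper does (up to the substitution $p=q$). Your competitor $\varphi=u-t\,\tau_{-h}(\eta^2\tau_h w)$ is exactly the sum of the two competitors $\varphi_1,\varphi_2$ of the paper, and your sign check is correct. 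You also correctly locate the genuine difficulty, namely the integrability $(\mu^2+|Du|^2)^{\frac{p-2}{4}}Du\in L^{2n/(n-2\alpha)}_{\mathrm{loc}}$ needed to run the H\"older step, and your bootstrap via Lemma~\ref{Besovlem} is the standard fix.

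However, the final step of your argument, the passage from the a priori estimate to the membership in $B^{\min\{\alpha,\gamma\}}_{2,\sigma,\mathrm{loc}}$, is not correct as stated, and this is where your route genuinely diverges from the paper's. Two separate problems. First, the Hajlasz--gradient characterization quoted from \cite{koskela} is a \emph{pointwise} characterization: a sequence $\{g_k\}$ qualifies when $|v(x)-v(y)|\le(g_k(x)+g_k(y))|x-y|^\alpha$ almost everywhere, not when one has an $L^2$ control of $\tau_h v$. The quantities you call ``the square roots of the right-hand sides'' are numbers depending on $h$, not a sequence of measurable functions, so they cannot serve as a Hajlasz gradient. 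The paper instead proceeds by the definition of the Besov seminorm directly: it divides the $L^2$-norm estimate of $\tau_hV_p(Du)$ by $|h|^{2\lambda}$, raises it to the power $\sigma/2$, integrates against $dh/|h|^n$ over $B(0,R/4)$, decomposes the $h$-ball into the annuli $E_k$, and then uses $\sum_k\|g_k\|^\sigma<\infty$ and the $B^\gamma_{p,\sigma}$ seminorm of $D\psi$ to close (see \eqref{Besovnorm}--\eqref{Besovnorm1} and the treatment of $J_1$, $J_2$, $J_3$). You should replace your last paragraph with this computation.

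Second, the form of the a priori estimate you propose, namely
\begin{equation*}
\int_{B_\rho}\bigl|\tau_h V_p(Du)\bigr|^2\,dx\ \le\ c\,|h|^{2\min\{\alpha,\gamma\}}\Bigl(1+\|g_k\|^2_{L^{n/\alpha}}+|h|^{-2\gamma}\|\tau_h D\psi\|^2_{L^p}\Bigr),
\end{equation*}
cannot be correct as written, because after dividing by $|h|^{2\lambda}$ the free ``$1$'' yields the divergent integral $\int_{B(0,R/4)}dh/|h|^n$. The resolution, which the paper implements, is that the constant contributions (the pieces carrying $D\eta$ and $\tau_h u$ or $\tau_h\psi$) come with the \emph{higher} power $|h|^2$ or $|h|^{\alpha+1}$ from Lemma~\ref{ldiff}, not with $|h|^{2\lambda}$; the only term whose power of $|h|$ degenerates to $|h|^{2\lambda}$ is precisely the one carrying $\|g_k\|^2$, and its integrability in $h$ is restored by the $\ell^\sigma$-summability of $\{\|g_k\|_{L^{n/\alpha}}\}$ over the dyadic annuli $E_k$. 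In short: keep the distinct powers $|h|^2$, $|h|^{\alpha+1}$, $|h|^{2\alpha}$, $|h|^{\alpha+\gamma}$, $|h|^{2\gamma}$ separate (as in \eqref{integI2}--\eqref{integI6} of the paper), do not lump them into one factor $|h|^{2\lambda}$, and conclude by taking the $L^\sigma(dh/|h|^n)$ norm rather than invoking the Hajlasz characterization.
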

In the case of a regularity of the type $B^{\alpha}_{p,\infty}$, which is the weakest one in the scale of Besov spaces, both on the coefficients and on the gradient of the obstacle, we have the following

\begin{thm}\label{thmp=q}
Assume that $\mathcal{A}(x,\xi)$ satisfies (A1)-(A3) for an exponent $2 \leq p=q < \frac{n}{\alpha}$ and let $u \in \mathcal{K}_{\psi}(\Omega)$ be the solution to the obstacle problem \eqref{1}. If there exists a non-negative function $k \in L^{\frac{n}{\alpha}}_{\text{loc}}(\Omega)$ such that
$$|\mathcal{A}(x,\xi)-\mathcal{A}(y, \xi)| \leq |x-y|^{\alpha} (k(x)+k(y)) (\mu^2 +|\xi|^2)^{\frac{p-1}{2}} ,$$
for a.e. $x,y \in \Omega$ and for every $\xi \in \mathbb{R}^n$, then the following implication
$$  D\psi \in B^\gamma_{p,\infty,\textrm{loc}}(\Omega)\Rightarrow (\mu^2 +|Du|^2)^\frac{p-2}{4} Du \in B^\alpha_{2,\infty,\textrm{loc}}(\Omega),$$
holds, provided $0< \alpha < \gamma < 1$.
\end{thm}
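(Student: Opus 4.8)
The plan is to run the difference quotient method on the variational inequality \eqref{1} in the borderline standard-growth setting $p=q$, using as competitor a function built from both the solution and the obstacle so as to remain in $\mathcal{K}_\psi(\Omega)$. Fix balls $B_{R/2}\subset B_R\Subset\Omega$ and cut-off $\eta\in\mathcal{C}^\infty_c(B_R)$ with $\eta\equiv 1$ on $B_{R/2}$. For $|h|$ small, the natural test function is
\begin{equation*}
\varphi = u + \tau_{-h}\bigl(\eta^2\, \tau_h(u-\psi)\bigr) + \tau_h\psi - \tau_h\psi\,,
\end{equation*}
more precisely one takes $\varphi_1 = u + t\,\tau_{-h}(\eta^2\tau_h(u-\psi))$ and $\varphi_2 = u - t\,\tau_{-h}(\eta^2\tau_h(u-\psi)) + (\text{correction with }\tau_h\psi)$, so that both are admissible; adding the two inequalities produces, after discrete integration by parts (property (ii) of the difference quotient), the fundamental inequality
\begin{equation*}
\int_{B_R} \eta^2\,\langle \tau_h\mathcal{A}(x,Du),\tau_h Du\rangle\,dx \le (\text{terms with }D\eta) + (\text{terms with }\tau_h\psi,\ \tau_h D\psi) + (\text{coefficient terms}).
\end{equation*}
The left side is bounded below, via (A2) and Lemma \ref{D1}, by $\nu\int_{B_R}\eta^2|\tau_h V_p(Du)|^2\,dx$. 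On the right, the coefficient terms are handled by (A4)-type control: since here $k\in L^{n/\alpha}_{\mathrm{loc}}$, one cannot simply put $k$ in $L^\infty$; instead one writes $\tau_h\mathcal{A}(x,Du(x)) = [\mathcal{A}(x+he_s,Du(x+he_s))-\mathcal{A}(x+he_s,Du(x))] + [\mathcal{A}(x+he_s,Du(x))-\mathcal{A}(x,Du(x))]$, estimates the first bracket by (A3) and the second by $|h|^\alpha(k(x)+k(x+he_s))(\mu^2+|Du|^2)^{(p-1)/2}$, and then uses Hölder with exponents tuned to $r=n/\alpha$ together with the higher integrability $Du\in L^{p^*}_{\mathrm{loc}}$ that follows from the energy bound and Sobolev embedding.

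\textbf{Key steps in order.} First, I would record the a priori estimate under the assumption $Du\in W^{1,p}_{\mathrm{loc}}$ (reduce to this by the approximation/regularization already used in \cite{eleuteri.passarelli2018}, or argue directly on difference quotients without this assumption). Second, derive the Caccioppoli-type inequality above and absorb the $\mathcal{A}(x+he_s,\cdot)-\mathcal{A}(x,\cdot)$ term: this yields
\begin{equation*}
\int_{B_{R/2}} |\tau_h V_p(Du)|^2\,dx \le c\,|h|^{2\alpha}\Bigl( \|k\|_{L^{n/\alpha}(B_R)}^2 \cdot \|(\mu^2+|Du|^2)^{(p-1)/2}\|^2_{L^{(n/\alpha)'\cdot\frac{2}{?}}} + \dots\Bigr) + c\int_{B_R}|\tau_h(u-\psi)|^2\cdots
\end{equation*}
plus genuinely new terms coming from the obstacle, which are controlled using $D\psi\in B^\gamma_{p,\infty,\mathrm{loc}}$, namely $\int_{B_R}|\tau_h D\psi|^p\le c|h|^{\gamma p}$, combined with the elementary inequality (from Lemma \ref{D1}) relating $|\tau_h Du|^p$ to $|\tau_h V_p(Du)|^2$ weighted by $(\mu^2+|Du|^2+|Du(\cdot+h)|^2)^{(p-2)/2}$. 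Third, since $\gamma>\alpha$, all $|h|^{\gamma p}$ contributions are dominated by $|h|^{2\alpha}$ for $|h|$ small (here $p\ge 2$ so $\gamma p\ge 2\gamma>2\alpha$), hence one obtains $\int_{B_{R/2}}|\tau_h V_p(Du)|^2\,dx\le c|h|^{2\alpha}$ for all small $h$. Fourth, invoke the iteration Lemma \ref{lm2} to pass from the $(s,r)$-dependent constants to a clean constant on $B_{R/2}$, and finally the characterization of Besov spaces (the sentence before Lemma \ref{Besovlem}, or Lemma \ref{Besovlem} itself) to conclude $V_p(Du)=(\mu^2+|Du|^2)^{(p-2)/4}Du\in B^\alpha_{2,\infty,\mathrm{loc}}(\Omega)$.

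\textbf{Main obstacle.} The delicate point is the treatment of the coefficient term in the borderline case $k\in L^{n/\alpha}_{\mathrm{loc}}$: with $r=n/\alpha$ exactly, the naive Hölder splitting produces $\|(\mu^2+|Du|^2)^{(p-1)/2}\|_{L^{2(n/\alpha)'}}$, which requires $Du\in L^{(p-1)(n/\alpha)'}_{\mathrm{loc}}$; this integrability must be extracted precisely from the conclusion itself via a bootstrap. The clean way, following \cite{eleuteri.passarelli2018}, is to use Lemma \ref{Besovlem}: from a preliminary (possibly suboptimal) estimate $\int_{B}|\tau_h V_p(Du)|^2\le c|h|^{2\beta}$ for some $\beta<\alpha$ one gets $V_p(Du)\in L^{\frac{2n}{n-2\beta}}_{\mathrm{loc}}$, hence $Du\in L^{\frac{pn}{n-2\beta}}_{\mathrm{loc}}$ by \eqref{Vp}; one checks that with $\gamma>\alpha$ this gained integrability, pushed as $\beta\uparrow\alpha$, suffices to close the coefficient estimate at the right exponent $2\alpha$ and at the right Besov scale $B^\alpha_{2,\infty}$ (the endpoint integrability $\sigma=\infty$ in the target is exactly what makes the $L^\infty$-in-$h$ formulation compatible with $k$ only in $L^{n/\alpha}$). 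The obstacle terms themselves are comparatively soft once $\gamma>\alpha$ is used to dominate their $h$-powers; the entire difficulty is the self-improving integrability argument that makes the $L^{n/\alpha}$ coefficient assumption just barely enough.
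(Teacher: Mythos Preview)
The paper does not prove this statement; Theorem~\ref{thmp=q} is quoted as a preliminary result from \cite{eleuteri.passarelli2018} (see the sentence introducing Section~2.3: ``we would like to recall the following regularity results (see \cite{eleuteri.passarelli2018} for the proof)''). So there is no in-paper proof to compare against; the relevant benchmark is the argument in \cite{eleuteri.passarelli2018}, whose structure is mirrored in the paper's own proof of Theorem~\ref{approximation} for the $p<q$ case.

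Your strategy is the correct one and matches that source: admissible competitors built from $\eta^2\tau_h(u-\psi)$, the splitting $\tau_h\mathcal{A}=[\mathcal{A}(x+h,Du(x+h))-\mathcal{A}(x+h,Du(x))]+[\mathcal{A}(x+h,Du(x))-\mathcal{A}(x,Du(x))]$, the ellipticity bound via Lemma~\ref{D1}, H\"older against $k\in L^{n/\alpha}$, the bootstrap through Lemma~\ref{Besovlem} to gain $Du\in L^{np/(n-2\beta)}_{\mathrm{loc}}$ for $\beta<\alpha$, and the iteration Lemma~\ref{lm2}. The paper's proof of Theorem~\ref{approximation} is exactly this machinery run with $p<q$, and it \emph{invokes} Theorem~\ref{thmp=q} at the start precisely to get the initial higher integrability needed to launch the interpolation.

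Two places in your sketch need tightening. First, your test-function paragraph is garbled: the clean choice (used verbatim in the paper, equations \eqref{2:2}--\eqref{2:3}) is $\varphi_1=u+t\,\eta^2\tau_h(u-\psi)$ and $\varphi_2=u+t\,\eta^2(\cdot-h)\tau_{-h}(u-\psi)$, both admissible for $t\in[0,1)$ because $\eta^2\le 1$ and $u-\psi\ge 0$; summing and changing variables gives \eqref{2:6} directly, with no ``correction with $\tau_h\psi$'' needed. Second, the H\"older bookkeeping you left with a ``?'' should read: with $r=n/\alpha$, the coefficient term produces $\|k\|_{L^{n/\alpha}}^2\bigl(\int(1+|Du|)^{pn/(n-2\alpha)}\bigr)^{(n-2\alpha)/n}$, which is exactly the borderline exponent furnished by Lemma~\ref{Besovlem} as $\beta\uparrow\alpha$; the strict inequality $\alpha<\gamma$ is what gives the slack to close (the obstacle term contributes $|h|^{2\gamma}$ after H\"older, not $|h|^{\gamma p}$). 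With these two fixes your outline is a faithful reconstruction of the cited proof.
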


\section{Approximation results}
\label{secappr}
We here collect some results which will be used to prove the passage to the limit in Theorems \ref{mainthm1} and \ref{mainthm2}.\\
We first recall the following Theorem, whose complete version can be found in \cite{cupini.guidorzi.mascolo2003} and which will be used to prove Lemma \ref{apprlem1}.

\begin{thm}\label{thmdefF}
Let $F: \Omega \times \R^n \rightarrow [0,+\infty), F= F(x,\xi)$, be a Carath\'{e}odory function. Then, assumptions (F2) and (F3) imply that
there exist $c_0(p,q,\nu,R, l, L),\ c_1(p, \nu)>0$ and a Carath\'{e}odory function $g:\Omega \times \R^n \rightarrow [-c_0,+\infty)$ s.t. for a.e. $x\in \Omega$ and every $\xi \in \R^n$,
\[ F(x,\xi) = c_1(\mu^2 +|\xi|^2)^\frac{p}{2}+ g(x,\xi).\]
\end{thm}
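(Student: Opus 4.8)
The plan is to show that the decomposition
\[
F(x,\xi)=c_1(\mu^2+|\xi|^2)^{\frac p2}+g(x,\xi)
\]
is obtained simply by \emph{defining} $g(x,\xi):=F(x,\xi)-c_1(\mu^2+|\xi|^2)^{\frac p2}$ for a suitable small constant $c_1=c_1(p,\nu)>0$, and then checking that this $g$ is Carath\'eodory and bounded below by a constant depending only on the stated parameters. The measurability in $x$ and continuity in $\xi$ of $g$ are immediate, since $F$ is Carath\'eodory and $\xi\mapsto(\mu^2+|\xi|^2)^{p/2}$ is continuous; so the whole content of the statement is the lower bound $g\ge -c_0$.

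The key step is therefore to produce, from the convexity/ellipticity hypothesis (F3), a pointwise lower bound for $F$ of the form
\[
F(x,\xi)\ \ge\ c_1(\mu^2+|\xi|^2)^{\frac p2}-c_0 .
\]
The natural way is to integrate (F3) twice along the segment $t\mapsto t\xi$. More precisely, for fixed $x$ and $\xi$ one writes, using Taylor's formula with integral remainder,
\[
F(x,\xi)=F(x,0)+\langle D_\xi F(x,0),\xi\rangle+\int_0^1(1-t)\,\langle D_{\xi\xi}F(x,t\xi)\,\xi,\xi\rangle\,dt,
\]
and then bounds the last integral from below by $\tilde\nu\int_0^1(1-t)(\mu^2+t^2|\xi|^2)^{\frac{p-2}2}|\xi|^2\,dt$ via (F3). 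An elementary computation of this one–dimensional integral (distinguishing, if one wishes, $|\xi|\le 1$ from $|\xi|\ge 1$, and using $p\ge 2$) gives a lower bound of the form $c_1(p,\nu)\big((\mu^2+|\xi|^2)^{\frac p2}-\mu^p\big)$ or, more crudely, $c_1(\mu^2+|\xi|^2)^{\frac p2}-C$. The affine terms $F(x,0)+\langle D_\xi F(x,0),\xi\rangle$ are then handled by noting that $F(x,0)\ge 0$ and that one may absorb the linear term: in fact one does not even need pointwise control of $D_\xi F(x,0)$, because (F2) already gives $F(x,\xi)\le \tilde l(\mu^2+|\xi|^2)^{q/2}$ and $F\ge 0$, which together with Young's inequality let one dispose of any linear growth. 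Alternatively, and more simply, one invokes (F2) directly: $F(x,\xi)\ge \frac1{\tilde l}(|\xi|^2-\mu^2)^{p/2}$ for $|\xi|\ge\mu$, and one checks that the right-hand side dominates $c_1(\mu^2+|\xi|^2)^{p/2}-c_0$ for suitable constants, while for $|\xi|\le\mu$ one uses $F\ge0$ and the boundedness of $(\mu^2+|\xi|^2)^{p/2}$ on that range (recall $\mu\in[0,1]$). Since the paper remarks that (F3)–(F4) imply (F2), either route is legitimate.

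Finally one sets $c_1$ equal to the constant produced in the previous step and $c_0$ equal to (an upper bound for) the additive constant $C$, observes that these depend only on $p,q,\nu,R,l,L$ as claimed, and concludes that $g=F-c_1(\mu^2+|\cdot|^2)^{p/2}\ge -c_0$, so $g:\Omega\times\R^n\to[-c_0,+\infty)$ is the desired Carath\'eodory function. The only mildly delicate point — the ``main obstacle'' — is making sure the one-dimensional integral $\int_0^1(1-t)(\mu^2+t^2|\xi|^2)^{\frac{p-2}2}|\xi|^2\,dt$ is estimated from below \emph{uniformly} in $\mu\in[0,1]$ and in $\xi$, since for $p=2$ it is trivial but for $p>2$ the weight $(\mu^2+t^2|\xi|^2)^{(p-2)/2}$ degenerates near $t=0$; splitting the integral over $t\in[\frac12,1]$ suffices to extract the factor $(\mu^2+|\xi|^2)^{(p-2)/2}|\xi|^2\gtrsim (\mu^2+|\xi|^2)^{p/2}-\mu^{p}$, which is exactly what is needed. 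Citing \cite{cupini.guidorzi.mascolo2003} for the precise bookkeeping of the constants is of course the intended shortcut.
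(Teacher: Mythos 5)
Your proposal is methodologically sound, but note first that the paper itself supplies \emph{no} proof of this theorem: it is quoted verbatim from \cite{cupini.guidorzi.mascolo2003} with the remark that ``the complete version'' lives there, and is used as a black box in the proof of Lemma~\ref{apprlem1}. So there is nothing inside the paper to compare your argument against; what you are doing is reconstructing the external proof, and your two routes (second-order Taylor expansion along $t\mapsto t\xi$ plus (F3), or a direct use of the lower bound in (F2)) are both standard and both work.

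Two caveats, though. First, the claim that ``one does not even need pointwise control of $D_\xi F(x,0)$'' and that Young's inequality ``lets one dispose of any linear growth'' is not quite right as written: Young's inequality absorbs a term of the form $M|\xi|$ into $\varepsilon|\xi|^p+C_\varepsilon$ only because $M$ is bounded uniformly in $x$. That bound on $M=|D_\xi F(x,0)|$ does hold, but one has to extract it, e.g.\ from convexity (a consequence of (F3)) and the upper bound in (F2): testing the subgradient inequality $F(x,\xi)-F(x,0)\ge\langle D_\xi F(x,0),\xi\rangle$ at unit vectors gives $|D_\xi F(x,0)|\le \tilde l\,(1+\mu^2)^{q/2}\le \tilde l\,2^{q/2}$. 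The step is easy but should be stated, not waved away. Second, although the theorem as quoted does not say so, the way it is actually used in the proof of Lemma~\ref{apprlem1} requires $g$ to be \emph{convex} in $\xi$ (this is the ``complete version'' alluded to). Your decomposition $g=F-c_1(\mu^2+|\cdot|^2)^{p/2}$ does have this property provided $c_1$ is taken small enough: since
\begin{align*}
D_{\xi\xi}\bigl(\mu^2+|\xi|^2\bigr)^{p/2}
&= p(\mu^2+|\xi|^2)^{\frac{p-2}{2}}I + p(p-2)(\mu^2+|\xi|^2)^{\frac{p-4}{2}}\xi\otimes\xi
\le p(p-1)(\mu^2+|\xi|^2)^{\frac{p-2}{2}}I,
\end{align*}
choosing $c_1\le \tilde\nu/\bigl(p(p-1)\bigr)$ and invoking (F3) gives $D_{\xi\xi}g\ge 0$. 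Mentioning this would align your reconstruction with the version the paper actually relies upon. With these two points tightened, the proposal is a correct and self-contained proof of the stated (and of the needed stronger) result.
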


In the next lemma, we adapt a well known approximation result, which can be found in \cite{cupini.guidorzi.mascolo2003}, to the case when the map $x \mapsto D_{\xi}F(x, \xi)$ has a Besov regularity.
\begin{lem}\label{apprlem1}
Let $F : \Omega \times \R^n \rightarrow [0,+\infty), F = F(x, \xi)$, be a Carath\'{e}odory function,
convex with respect to $\xi$, satisfying assumptions (F1), (F2), (F3) and (F5). Then there exists
a sequence $(F_j)$ of Carath\'{e}odory functions $F_j: \Omega \times \R^n \rightarrow [0,+\infty)$, convex with respect to the last variable, monotonically convergent to $F$, such that
\begin{itemize}
\item[(i)]for a.e. $x \in \Omega$ and every $\xi \in \R^n$, 
$F_j(x,\xi) = \tilde{F}_j (x,|\xi|)$,
\item[(ii)]for a.e. $x \in \Omega$, for every 	$\xi \in \R^n$ and for every $j$, 
$F_j(x,\xi)\leq F_{j+1}(x, \xi) \leq F(x,\xi)$,
\item[(iii)]for a.e. $x \in \Omega$ and every $\xi \in \R^n$, we have
$\langle D_{\xi\xi} F_j(x,\xi)\lambda, \lambda\rangle \geq \bar{\nu}(\mu^2+|\xi|^2)^\frac{p-2}{2}|\lambda|^2$,
with $\bar{\nu}$ depending only on $p$ and $\nu$,
\item[(iv)] for a.e. $x\in \Omega$ and for every $\xi \in \R^n$, there exist $L_1$, independent of $j$, and $\bar{L}_1$, depending on $j$, such that
\begin{align*}
& 1/L_1(\mu + |\xi|)^p \leq F_j(x,\xi)\leq L_1(\mu + |\xi|)^q,\\
&F_j (x,\xi)\leq \bar{L}_1(j)(\mu + |\xi|)^p,
\end{align*} 
\item[(v)]there exists a constant $C(j) > 0$ such that
\begin{align*}
&|D_\xi F_j(x,\xi)-D_\xi F_j(y,\xi)| \leq |x-y|^{\alpha} (k(x)+k(y))(\mu^2 +|\xi|^2)^\frac{q-1}{2},\\
&|D_\xi F_j(x,\xi)-D_\xi F_j(y,\xi)| \leq  C(j)|x-y|^{\alpha} (k(x)+k(y))(\mu^2 +|\xi|^2)^\frac{p-1}{2}
\end{align*}
for a.e. $x,y \in \Omega$ and for every $\xi \in \R^n$.
\end{itemize}
\end{lem}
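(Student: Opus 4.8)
The plan is to follow the classical Esposito--Leonetti--Mingione / Cupini--Guidorzi--Mascolo approximation scheme and to modify it so that the Besov-regularity assumption (F5) survives the approximation. I would begin from Theorem~\ref{thmdefF}, which lets me write $F(x,\xi) = c_1(\mu^2+|\xi|^2)^{p/2} + g(x,\xi)$ with $g$ a Carath\'eodory function bounded below by $-c_0$. The point of this splitting is that the ``bad'' part $g$ only needs to be regularised in a way that destroys neither convexity nor the structure conditions, while the ``good'' $p$-coercive part is left untouched and will supply the lower bound in (iv) and the ellipticity floor in (iii).

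Next I would perform the regularisation of $g$ in two independent steps. First, a truncation/penalisation in the $\xi$-variable: replace $g(x,\xi)$ by something like $g(x,\xi)\wedge (j(\mu^2+|\xi|^2)^{p/2})$ — or, more carefully, add a term $\varepsilon_j(\mu^2+|\xi|^2)^{q/2}$ and then truncate — chosen so that the resulting $F_j$ has $p$-growth from above (the $j$-dependent bound $F_j\le \bar L_1(j)(\mu+|\xi|)^p$) while keeping $F_j\le F$ and monotone increasing to $F$; because all operations are of the form ``min of two convex functions that agree for large $|\xi|$'' done with care, or better a sup-convolution type construction, convexity in $\xi$ is preserved. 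Second, to keep the radial structure (F1), one works throughout with the radial representative $\tilde F$, so that $F_j(x,\xi)=\tilde F_j(x,|\xi|)$ holds by construction — this gives (i). Then (ii) is immediate from the construction, and (iii) follows because the $c_1(\mu^2+|\xi|^2)^{p/2}$ summand is untouched and contributes a Hessian bounded below by $\bar\nu(\mu^2+|\xi|^2)^{(p-2)/2}|\lambda|^2$, while the truncation of $g$ can be arranged to keep $D_{\xi\xi}g_j$ bounded below by a nonnegative (or harmless) quantity; similarly the left inequality in (iv) comes from (F2) applied to the untouched coercive part, and the two right inequalities in (iv) from the $j$-independent $q$-growth of $F$ and the $j$-dependent $p$-truncation.

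The genuinely new point — and the step I expect to be the main obstacle — is item (v): one must verify that the $x$-dependence estimate (F5) is inherited by $F_j$, both in its native $q$-form and, crucially, in a $p$-form with a $j$-dependent constant $C(j)$. The first inequality in (v) should follow because the regularisation acts ``pointwise in $x$'' on $g$ (mollification in $\xi$ only, or a truncation level independent of $x$), so differences $D_\xi F_j(x,\xi)-D_\xi F_j(y,\xi)$ are controlled by the corresponding differences of $D_\xi F(x,\cdot)$ composed with Lipschitz maps in $\xi$, and (F5) passes through with the same weight $(\mu^2+|\xi|^2)^{(q-1)/2}$ and the same $k\in L^r_{\mathrm{loc}}$. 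For the second inequality one exploits precisely the $j$-dependent $p$-growth: on the region where the truncation is active the gradient $D_\xi F_j$ is controlled by a $p$-power of $|\xi|$ up to a constant depending on $j$, so $(\mu^2+|\xi|^2)^{(q-1)/2}$ can be replaced by $C(j)(\mu^2+|\xi|^2)^{(p-1)/2}$; on the complementary region $|\xi|$ is bounded in terms of $j$, so again the $q$-power is dominated by $C(j)$ times the $p$-power. Here one has to be a little careful that the constant $C(j)$ does not depend on $x,y$ or $\xi$, only on $j$ (and the structural data), which is why the truncation level must be chosen $x$-uniform. Finally I would record that monotone convergence $F_j\uparrow F$, together with the uniform lower coercivity, is enough for the later $\Gamma$-convergence / lower-semicontinuity passage to the limit, but that is used in the proof of the main theorems rather than in the lemma itself.
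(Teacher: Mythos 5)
Your high-level strategy is the paper's one: decompose $F = c_1(\mu^2+|\xi|^2)^{p/2}+g$ via Theorem~\ref{thmdefF}, regularise only the remainder $g$, work with the radial representative to keep (F1), let the untouched $p$-coercive part supply (iii) and the lower bound in (iv), and argue (v) by splitting into the region where the regularisation is active and the region where it is not. That part is right and is indeed what the paper does.

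Where the proposal has a genuine gap is in the specific regularising operation. Your primary suggestion — take $g(x,\xi)\wedge j(\mu^2+|\xi|^2)^{p/2}$ — does not work: the minimum of two convex functions is generically not convex (only the maximum is), and the fallback phrases ``done with care'' or ``sup-convolution'' do not replace an actual construction. The paper's construction is different and is the key point: pass to the radial $\tilde g$, which for $n\ge 2$ is convex and increasing in $t=|\xi|$, keep $\tilde g(x,t)$ unchanged on $[0,j]$, and for $t>j$ replace it by its \emph{tangent line} at $t=j$,
\[
\tilde g_j(x,t)=\tilde g(x,j)+D_t\tilde g(x,j)\,(t-j),\qquad t>j.
\]
The two pieces match in value and derivative at $t=j$ and the second piece is affine, so $\tilde g_j$ is convex; by convexity of $\tilde g$ the tangent line lies below $\tilde g$, giving $\tilde g_j\le\tilde g_{j+1}\le\tilde g$ and hence (ii); and since $\tilde g_j$ has linear growth in $t$ one gets the $j$-dependent $p$-growth bound in (iv). None of this falls out of a min or a mollification.

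This choice also matters in your discussion of (v), which is otherwise on the right track. The reason the second inequality in (v) holds with a clean $C(j)$ is exactly that $D_t\tilde g_j(x,t)$ is \emph{constant} in $t$ for $t\ge j$, equal to $D_t\tilde g(x,j)$. For $t\le j$ one uses (F5) together with $(\mu+t)^{q-p}\le(\mu+j)^{q-p}$; for $t>j$ one uses (F5) at the fixed level $t=j$ together with $(\mu+j)^{p-1}\le(\mu+t)^{p-1}$. Both steps quote the tangent-line structure; with a truncation or sup-convolution the $t$-derivative beyond $j$ would not be a single $x$-measurable quantity evaluated at $t=j$, and you would not be able to control the spatial increment by $(\mu+t)^{p-1}$ with a constant depending only on $j$. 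So you have the right target and the right decomposition, but you need to replace the min/sup-convolution idea by the explicit linear extension of $\tilde g$ past $t=j$, and then the verification of (i)--(v) goes through exactly as you sketched.
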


\begin{proof}
According to Theorem \ref{thmdefF}, which holds under hypotheses (F2) and (F3), there exist the positive constants
$c_0 = c_0(p,q,\nu,R,l,L)$ and $c_1 =c_1(p,\nu)$ and a function $g: \Omega \times \R^n \rightarrow [- c_0, +\infty)$ s.t.
\begin{equation}\label{lemdefF}
F(x, \xi) = c_1(\mu^2 + |\xi|^2)^\frac{p}{2} + g(x,\xi)
\end{equation}
with $g$ convex. Moreover there exists $\tilde{g}: \Omega \times [0,+\infty) \rightarrow [- c_0,+\infty)$ s.t.
$\tilde{g}(x,|\xi|) = g(x,\xi)$ for any $\xi \in \R^n$. Since $n\geq 2$, for a.e. $x\in \Omega$, $t\mapsto \tilde{g}(x,t)$ is convex and increasing. For any $j \in \N$, we might then define $\tilde{g}_j: \Omega \times [0,+\infty) \rightarrow [-c_0, +\infty)$ as
\begin{align*}
&\tilde{g}_j(x,t) = \tilde{g}(x,t) \qquad \forall (x,t) \in \Omega \times [0,j],\\
&\tilde{g}_j(x,t) = \tilde{g}(x,j) + D_t \tilde{g}(x,j)(t-j) \qquad \forall (x,t) \in \Omega \times (j,\infty)
\end{align*}
We notice that, by definition, for a.e. $x\in \Omega, t\mapsto \tilde{g}_j(x,t)$ is convex and increasing in $[0,+\infty)$ and $\tilde{g}_j(x,t) \leq\tilde{g}_{j+1}(x,t) \leq \tilde{g}(x,t)$.
Combining assumption (F2), the definition of $\tilde{g}_j(x,t)$ and \eqref{lemdefF}, we infer
\begin{align} \nonumber
&\tilde{g}_j(x,t) \leq l (\mu + t)^q,\\ \label{gtil}
&\tilde{g}_j(x,t) \leq c(q,l,j) (\mu + t)^p.
\end{align}
We now want to show that $D_t \tilde{g}_j$ has a (F5)-type growth. It is easy to see that $D_t \tilde{g}_j(x,t) = D_t \tilde{g}(x,j)$ for $t\geq j$. In particular, assumption (F5) yields 
$|D_t \tilde{g}(x,j)-D_t \tilde{g}(y,j)| \leq |x-y|^\alpha(k(x)+k(y))(\mu + j)^{q-1}$. Hence, for a.e. $x \in \Omega$ and every $t> 0$,
\begin{align}\label{disgradiente}
&|D_t \tilde{g}(x,t)-D_t \tilde{g}(y,t)| \leq |x-y|^\alpha(k(x)+k(y))(\mu + t)^{q-1}.
\end{align}
Moreover, for $t \leq j$, according to \eqref{lemdefF} and \eqref{disgradiente}, we obtain  
\begin{align*}
|D_t \tilde{g}(x,t)-D_t \tilde{g}(y,t)| 
\leq & |x-y|^\alpha(k(x)+k(y))(\mu + t)^{p-1} (\mu +t)^{q-p} \\
\leq &  |x-y|^\alpha(k(x)+k(y))(\mu + t)^{p-1} (\mu +j)^{q-p} \\
\leq & c(j) |x-y|^\alpha(k(x)+k(y))(\mu + t)^{p-1}.
\end{align*}
On the other hand, in the same way, for $t > j$, we get
\begin{align*}
|D_t \tilde{g}(x,t)-D_t \tilde{g}(y,t)| 
\leq & |x-y|^\alpha(k(x)+k(y))(\mu + j)^{p-1} (\mu +j)^{q-p} \\
\leq & |x-y|^\alpha(k(x)+k(y))(\mu + t)^{p-1} (\mu +j)^{q-p} \\
\leq & c(j) |x-y|^\alpha(k(x)+k(y))(\mu + t)^{p-1}.
\end{align*}
Eventually, for any $j$, we define $g_j: \Omega \times \R^n \rightarrow [-c_0, +\infty)$ as
\begin{align*}
g_j(x,\xi)= \tilde{g}_j(x,|\xi|). 
\end{align*}
Statements $(i),(ii),(iii),(v)$ directly follow by setting $F_j: \Omega \times \R^n \rightarrow[0,+\infty)$
\[F_j(x, \xi) := c_1(\mu^2 + |\xi|^2)^\frac{p}{2} + g_j(x,\xi).\]
Property $(iv)$ is obtained  combining \eqref{lemdefF} with \eqref{gtil} and the definition of $F_j$.
\end{proof}

\begin{rmk}\label{remappr}
It is worth noting that an analogous version of Lemma \ref{apprlem1} can be proved similarly, supposing (F6) instead of (F5). In particular, statement $(v)$ would change as follows.
\begin{itemize}
\item[(v)]There exists a constant $C(j) > 0$ such that
\begin{align*}
&|D_\xi F_j(x,\xi)-D_\xi F_j(y,\xi)| \leq |x-y|^{\alpha} (g_k(x)+g_k(y))(\mu^2 +|\xi|^2)^\frac{q-1}{2},\\
&|D_\xi F_j(x,\xi)-D_\xi F_j(y,\xi)| \leq  C(j)|x-y|^{\alpha} (g_k(x)+g_k(y))(\mu^2 +|\xi|^2)^\frac{p-1}{2}
\end{align*}
for a.e. $x,y \in \Omega$ such that $2^{-k} \text{diam}(\Omega) \leq |x-y| < 2^{-k+1}\text{diam}(\Omega)$ and for every $\xi \in \R^n$.
\end{itemize}
\end{rmk}

\section{Proof of Theorem \ref{mainthm1}}\label{secthm1}
In order to prove Theorem \ref{mainthm1}, in Section \ref{aprioriestimate}, we derive a suitable a priori estimate for minimizers of obstacle problems with $p$-growth conditions, while in Section \ref{passagetothelimit}, we conclude showing that the a priori estimate is preserved when passing to the limit.

\subsection{A priori estimate}\label{aprioriestimate}
Let us consider
\begin{gather}\label{regprob}
\min \biggl\{ \displaystyle\int_{\Omega} F_j(x,Dw)dx \ : \ w \in \mathcal{K}_{\psi}(\Omega)  \biggr\},
\end{gather}
where $F_j: \Omega \times \R^n \rightarrow [0,+\infty)$, $F_j=F_j(x, \xi)$, was set in Lemma \ref{apprlem1}.
\\Setting 
\begin{gather}
\mathcal{A}_j(x,\xi)= D_{\xi}F_j(x,\xi), \notag
\end{gather}
one can easily check that $\mathcal{A}_j$ satisfies (A1)--(A4) and the following assumptions:
\begin{gather}
|\mathcal{A}_j(x,\xi)| \leq l_1(j)(\mu^2 +|\xi|^2)^{\frac{p-1}{2}} \label{K2} \\
|\mathcal{A}_j(x,\xi)-\mathcal{A}_j(x,\eta)|\leq L_1(j)|\xi-\eta| (\mu^2+|\xi|^{2}+|\eta|^{2})^{\frac{p-2}{2}} \label{K1}\\
|\mathcal{A}_j(x,\xi)-\mathcal{A}_j(y, \xi)| \leq \Theta(j)|x-y|^{\alpha} (k(x)+k(y)) (\mu^2 +|\xi|^2)^{\frac{p-1}{2}} \label{K3}
\end{gather}
for a.e. $x,y \in \Omega$, for every $\xi, \eta \in \mathbb{R}^n$.
It is well known that $u_j \in \mathcal{K}_{\psi}(\Omega)$ is a minimizer of problem \eqref{regprob} if, and only if, the following variational inequality holds 
\begin{align}\label{varineq2}
\displaystyle\int_{\Omega} \langle \mathcal{A}_j(x,Du_j)  ,D(\varphi-u_j) \rangle dx \geq 0 , \quad \forall \varphi\in \mathcal{K}_{\psi}(\Omega).
\end{align}

The following result holds:
\begin{thm}\label{approximation}
Let $\mathcal{A}_j(x,\xi)$ satisfy (A1)--(A4) and \eqref{K1}-- \eqref{K3} for exponents $2\leq p < q <\frac{n}{\alpha}< r$ satisfying \eqref{gap}.
Let $u_j\in \mathcal{K}_\psi(\Omega)$ be the solution to the obstacle problem \eqref{varineq2}. Suppose that $k \in L^{r}_{loc}(\Omega)$ and $D \psi \in B^{\gamma}_{2q-p, \infty,loc}(\Omega)$, for $0 < \alpha < \gamma <1$. Then, the following estimate
\begin{gather}
\displaystyle\int_{B_{R/4}} |\tau_h V_p(Du_j)|^2 dx \leq C |h|^{2\alpha} \biggl\{  \ibR (1+ |Du_j|^p)dx+ \Vert D \psi \Vert_{B^{\gamma}_{2q-p, \infty}(B_R)} \biggl\}^{\kappa},
\end{gather}
holds for all balls $B_{R/4} \subset B_R \Subset \Omega$, for positive constants $C := C(R,n,p,q,r, \beta)$, $\kappa:= \kappa (n,p,q,r, \beta)$, both independent of $j$, and for some $0<\beta < \alpha$.
\end{thm}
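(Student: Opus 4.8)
The plan is to establish the a priori estimate via the difference quotient method, testing the variational inequality \eqref{varineq2} with an admissible competitor built from both $u_j$ and $\psi$. Fix balls $B_{R/4}\subset B_R\Subset\Omega$, a cut-off $\eta\in C^\infty_c(B_R)$ with $\eta\equiv 1$ on a smaller ball, and $|h|$ small. The natural choice of test function is $\varphi=u_j+t\tau_{-h}(\eta^2\tau_h(u_j-\psi))$ for suitable $t>0$, together with the reflected choice coming from $\varphi=u_j+\tau_h(\eta^2\tau_{-h}(u_j-\psi))$; the point of subtracting $\psi$ is precisely that $\varphi\geq\psi$ a.e., so $\varphi\in\mathcal K_\psi(\Omega)$. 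Plugging in and using property (ii) of the finite difference operator to move $\tau_{-h}$ onto $\mathcal A_j(x,Du_j)$, one arrives after summation over $s\in\{1,\dots,n\}$ at an inequality of the form
\begin{equation*}
\int_{B_R}\eta^2\langle\tau_h\mathcal A_j(x,Du_j),\tau_h Du_j\rangle\,dx\leq \text{(obstacle terms)}+\text{(cut-off terms)}+\text{(}x\text{-dependence terms)}.
\end{equation*}

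Next I would split $\tau_h\mathcal A_j(x,Du_j)=\bigl(\mathcal A_j(x+h,Du_j(x+h))-\mathcal A_j(x+h,Du_j(x))\bigr)+\bigl(\mathcal A_j(x+h,Du_j(x))-\mathcal A_j(x,Du_j(x))\bigr)$. The first bracket is handled by the ellipticity (A2) together with Lemma \ref{D1}, giving a lower bound $c\nu\int_{B_R}\eta^2|\tau_h V_p(Du_j)|^2\,dx$ for the left-hand side — this is the quantity we want to control. The second bracket is estimated by (A4): $|x-y|^\alpha(k(x)+k(y))(\mu^2+|Du_j|^2)^{(q-1)/2}$, and here the integrability $k\in L^r_{\mathrm{loc}}$ with $r>n/\alpha$ is used through Hölder's inequality. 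The obstacle terms produce contributions involving $\tau_h D\psi$ paired against $\mathcal A_j(x,Du_j)$ and against $\tau_h\mathcal A_j$; these are controlled using the $q$-growth (A1), the Lipschitz bound (A3), Young's inequality, and Lemma \ref{ldiff}, at the cost of a term with $\|\tau_h D\psi\|$ in exponent $2q-p$ — this is exactly why the hypothesis $D\psi\in B^\gamma_{2q-p,\infty,\mathrm{loc}}$ appears, and why the threshold $2q-p$ (rather than $p$ or $q$) shows up. The cut-off terms are of lower order and absorbed after using Young's inequality and the iteration Lemma \ref{lm2}.

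The genuine difficulty — and the reason for the gap condition \eqref{gap} — is the following. After the lower bound, the left side controls $\int\eta^2|\tau_h V_p(Du_j)|^2$, but the right side still contains a term like $\int_{B_R}\eta^2|\tau_h Du_j|^2(\mu^2+|Du_j|^2)^{(q-2)/2}\cdot(\text{stuff})$ coming from the $q$-growth mismatch between (A1)/(A3) and the $p$-ellipticity (A2). One must reabsorb this into the left-hand side. The standard route is to first prove, as a preliminary step, a higher-integrability statement $Du_j\in L^{p_1}_{\mathrm{loc}}$ for some $p_1>q$ depending only on the structural constants (via a Gehring-type / reverse-Hölder argument, or by citing the $W^{1,p}$-$W^{1,q}$ integrability results available under \eqref{gap}); then the extra factor $(\mu^2+|Du_j|^2)^{(q-p)/2}$ can be controlled in an $L^{p_1/(p_1-p)}$-norm by Hölder, and reabsorbed provided the integrability gain outweighs $q-p$, which is precisely the quantitative content of \eqref{gap} in the fractional scale (the term $\alpha/n$ measuring the Sobolev gain from $\tau_h$, and $-1/r$ the loss from the coefficient $k$). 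Interpolating the resulting estimate — $\int_{B_{R/4}}|\tau_h V_p(Du_j)|^2\le C|h|^{2\alpha}(\cdots)^{\kappa}$ with the right side controlled uniformly in $j$ by the $L^p$-energy of $Du_j$ and $\|D\psi\|_{B^\gamma_{2q-p,\infty}}$ — and tracking carefully that all constants $C,\kappa$ depend only on $n,p,q,r,R$ and some $\beta<\alpha$ (but not on $j$, since the $j$-dependent constants $l_1(j),L_1(j),\Theta(j),\bar L_1(j)$ enter only the qualitative regularity needed to justify the difference-quotient manipulations and not the final bound), completes the proof. The main obstacle is thus organizing the reabsorption so that the final constant is $j$-independent; once that bookkeeping is done, the estimate follows by a routine application of Lemma \ref{lm2}.
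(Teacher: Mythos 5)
Your outline captures the first half of the argument correctly: the test functions built from $u_j$ and $\psi$, the decomposition of $\tau_h\mathcal{A}_j(x,Du_j)$ into the Lagrangian part (handled by (A2) and Lemma \ref{D1}) and the coefficient part (handled by (A4) together with H\"older against $k\in L^r$), and the appearance of the exponent $2q-p$ on $\tau_h D\psi$ after Young's inequality. However, the way you propose to close the estimate --- invoking a Gehring-type reverse H\"older inequality, or an unspecified ``$W^{1,p}$--$W^{1,q}$ integrability result,'' to obtain $Du_j\in L^{p_1}_{\mathrm{loc}}$ for some $p_1>q$ as a \emph{quantitative} preliminary step --- is not what the paper does, and as stated it has a gap. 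First, $p_1>q$ is not enough: after H\"older against $k^r$ one must control $\Vert Du_j\Vert_{L^{r(2q-p)/(r-2)}}$, and a Gehring argument produces only a small, structurally determined exponent gain that there is no reason to expect reaches $r(2q-p)/(r-2)$. Second, and more seriously, any reverse H\"older argument applied to the $j$-th approximating minimizer would produce constants depending on the $j$-dependent structure constants $l_1(j), L_1(j), \Theta(j)$ of \eqref{K1}--\eqref{K3}, destroying the $j$-independence of the final bound, which is the whole point of the a priori estimate.

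The paper's route is different and avoids both issues. It does \emph{not} prove quantitative higher integrability first. It observes that, since $F_j$ has standard $p$-growth with $j$-dependent constants (Lemma \ref{apprlem1}), the $p=q$ result Theorem \ref{thmp=q} applies and yields the \emph{qualitative} statement $V_p(Du_j)\in B^\alpha_{2,\infty,\mathrm{loc}}$, hence $Du_j\in L^{np/(n-2\beta)}_{\mathrm{loc}}$ for all $\beta<\alpha$ by Lemma \ref{Besovlem} --- finiteness only, with a possibly $j$-dependent bound. It then checks, using \eqref{gap}, that $r(2q-p)/(r-2)\le np/(n-2\beta)$ for $\beta$ in a suitable range, interpolates $\Vert Du_j\Vert_{L^{r(2q-p)/(r-2)}}$ between $L^p$ and $L^{np/(n-2\beta)}$, and applies Young's inequality (the bounds \eqref{ineq.1}, which encode \eqref{gap}, are exactly what make the Young split admissible) to peel off small-$\theta$ multiples of $\bigl(\int_{B_t}(1+|Du_j|)^{np/(n-2\beta)}\bigr)^{(n-2\beta)/n}$. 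The decisive observation is that this same quantity reappears on the left after one more application of Lemma \ref{Besovlem} to the resulting estimate for $\tau_h V_p(Du_j)$; so one runs a hole-filling bootstrap with the iteration Lemma \ref{lm2}, applied twice on nested radii, to upgrade the qualitative finiteness to a quantitative, $j$-uniform bound in terms of only $\int_{B_R}(1+|Du_j|^p)$ and $\Vert D\psi\Vert_{B^\gamma_{2q-p,\infty}}$. It is precisely this self-referential absorption that makes $C$ and $\kappa$ independent of $j$. You should replace the Gehring step with this interpolation-plus-iteration bootstrap.
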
\label{ape}

\proof
\noindent We start by observing that, since $p < 2q-p $, we have
$$D \psi \in B^{\gamma}_{2q-p,\infty,loc}(\Omega) \Rightarrow D \psi \in B^{\gamma}_{p,\infty,loc}(\Omega),$$
thus an application of Theorem \ref{thmp=q} implies 
$$(\mu^2+ |Du_j|^2)^{\frac{p-2}{4}}Du_j \in B^{\alpha}_{2,\infty,loc}(\Omega),$$
which yields, by applying Lemma \ref{Besovlem},
$$Du_j \in L^{\frac{np}{n-2 \beta}}_{loc}(\Omega),$$
for all $0< \beta < \alpha$.
Thus, the integral 
$$\displaystyle\int_{\Omega'}(1+ |Du_j|)^{\frac{np}{n-2\beta}}dx$$
is finite, for every $\Omega' \Subset \Omega$ and $\beta \in (0, \alpha)$.
\\In the sequel we will profusely use the following inequality:
\begin{equation}\label{dis}
2q-p \leq \dfrac{r(2q-p)}{r-2} \leq \dfrac{np}{n-2 \beta} ,
\end{equation}
for $\beta \in (\frac{\alpha n r}{nr+2(\alpha r -n)}, \alpha)$. The first part of inequality \eqref{dis} is trivial, while the second part comes from \eqref{gap}. Namely,
\begin{align*}
\dfrac{r(2q-p)}{r-2} \leq  \dfrac{np}{n- 2 \beta} \Leftrightarrow \dfrac{q}{p} \leq \dfrac{nr-n- \beta r}{r (n- 2 \beta)}
\end{align*}
and
\begin{align*}
1+ \dfrac{\alpha}{n}-\dfrac{1}{r} < \dfrac{nr-n- \beta r}{r (n- 2 \beta)} \Leftrightarrow \beta > \frac{\alpha n r}{nr+2(\alpha r -n)}.
\end{align*}

Fix $0 < \frac{R}{4} < \rho < s < t < t' < \frac{R}{2} $ such that $B_{R} \Subset \Omega$ and a cut-off function $\eta \in \mathcal{C}_0^1(B_t)$ such that $0 \leq \eta \leq 1$, $\eta =1$ on $B_s$, $|D \eta | \leq \frac{C}{t-s}$.
\\Now, for $|h| < \frac{R}{4}$, we consider functions
\begin{gather}
v_{1}(x)= \eta^{2}(x) [(u_j-\psi)(x+h)-(u_j-\psi)(x)] \notag 
\end{gather}
and
\begin{gather}
v_{2}(x)= \eta^{2}(x-h) [(u_j-\psi)(x-h)-(u_j-\psi)(x)]. \notag 
\end{gather}
Then
\begin{equation}
\varphi_1(x)=u_j(x)+tv_1 (x), \label{2:2} 
\end{equation}
\begin{equation}
\varphi_2(x)=u_j(x)+tv_2(x) \label{2:3}
\end{equation}
are admissible test functions for all $t \in [0,1)$.
\\Inserting \eqref{2:2} and \eqref{2:3} in \eqref{varineq2}, we obtain
\begin{align}
\displaystyle\int_{\Omega}  \langle \mathcal{A}_j(x,Du_j), & D(\eta^2 \tau_h(u_j- \psi)) \rangle dx + \displaystyle\int_{\Omega} \langle \mathcal{A}_j(x,Du_j), D(\eta^2(x-h) \tau_{-h}(u_j- \psi)) \rangle dx \geq 0 \label{2:4}
\end{align}
By means of a simple change of variable, we can write the second integral on the left hand side of the previous inequality as follows
\begin{align}
\displaystyle\int_{\Omega} \langle \mathcal{A}_j(x+h,Du_j(x+h)), D(-\eta^2 \tau_h(u_j- \psi)) \rangle dx \label{2:5}
\end{align}
and so inequality \eqref{2:4} becomes
\begin{align}
\displaystyle\int_{\Omega} \langle \mathcal{A}_j(x+h,Du_j(x+h))-\mathcal{A}_j(x,Du_j(x)), D(\eta^2 \tau_h(u_j- \psi)) \rangle dx 
\leq 0 \label{2:6}
\end{align}
We can write previous inequality as follows
\begin{align}
0 \geq & \displaystyle\int_{\Omega} \langle \mathcal{A}_j(x+h,Du_j(x+h))-\mathcal{A}_j(x+h,Du_j(x)),\eta^{2}D\tau_{h}u_j \rangle dx\notag\\
  &-\displaystyle\int_{\Omega} \langle \mathcal{A}_j(x+h,Du_j(x+h))-\mathcal{A}_j(x+h,Du_j(x)),\eta^{2}D\tau_{h}\psi \rangle dx\notag\\
  &+\displaystyle\int_{\Omega} \langle \mathcal{A}_j(x+h,Du_j(x+h))-\mathcal{A}_j(x+h,Du_j(x)),2\eta D \eta\tau_{h}(u_j-\psi) \rangle dx\notag\\
  &+\displaystyle\int_{\Omega} \langle \mathcal{A}_j(x+h,Du_j(x))-\mathcal{A}_j(x,Du_j(x)),\eta^{2}D\tau_{h}u_j \rangle dx\notag\\
  &-\displaystyle\int_{\Omega} \langle \mathcal{A}_j(x+h,Du_j(x))-\mathcal{A}_j(x,Du_j(x)),\eta^{2}D\tau_{h}\psi \rangle dx\notag\\
  &+\displaystyle\int_{\Omega} \langle \mathcal{A}_j(x+h,Du_j(x))-\mathcal{A}_j(x,Du_j(x)),2\eta D \eta\tau_{h}(u_j-\psi) \rangle dx \notag\\
 =:& I_{1}+I_{2}+I_{3}+I_{4}+I_{5}+I_{6}, \label{2:7}
\end{align}
that yields
\begin{align}
I_1 \leq & |I_2| + |I_3| + |I_4| + |I_5| + |I_6|  \label{2:8}
\end{align}
The ellipticity assumption (A2) implies
\begin{align}
I_{1} \geq \nu \displaystyle\int_{\Omega}  \eta^{2} |\tau_{h}Du_j|^{2}(\mu^2 + |Du_j(x+h)|^{2}+|Du_j(x)|^{2})^{\frac{p-2}{2}} dx \label{I1}
\end{align}
From the growth condition (A3), Young's and H\"{o}lder's inequalities and assumption on $D \psi$, we get
\begin{align}
|I_{2}|\leq & L \displaystyle\int_{\Omega} \eta^{2} |\tau_{h}Du_j|(\mu^2 + |Du_j(x+h)|^{2}+|Du_j(x)|^{2})^{\frac{q-2}{2}}|\tau_{h}D \psi| dx \notag \\
\leq & \varepsilon\displaystyle\int_{\Omega} \eta^2 |\tau_hDu_j|^2 (\mu^2+|Du_j(x+h)|^2+|Du_j(x)|^2)^{\frac{p-2}{2}}dx \notag\\
&+ C_{\varepsilon}(L) \displaystyle\int_{\Omega} \eta^2 |\tau_h D \psi|^2 (\mu^2+|Du_j(x+h)|^2+|Du_j(x)|^2)^{\frac{2q-p-2}{2}} dx\notag\\
\leq & \varepsilon \displaystyle\int_{\Omega} \eta^2 |\tau_hDu_j|^2 (\mu^2+|Du_j(x+h)|^2+|Du_j(x)|^2)^{\frac{p-2}{2}}dx \notag\\
&+ C_{\varepsilon}(L) \biggl( \displaystyle\int_{B_t}|\tau_h D \psi|^{2q-p}dx \biggr)^{\frac{2}{2q-p}}  \biggl( \displaystyle\int_{B_{t'}}(1+|D u_j|)^{2q-p}dx \biggr)^{\frac{2q-p-2}{2q-p}}\notag\\
\leq & \varepsilon \displaystyle\int_{\Omega} \eta^2 |\tau_hDu_j|^2 (\mu^2+|Du_j(x+h)|^2+|Du_j(x)|^2)^{\frac{p-2}{2}}dx \notag\\
&+ C_{\varepsilon}(L,n,p,q)|h|^{2\gamma}[D \psi]^2_{B^{\gamma}_{2q-p, \infty}(B_R)} \biggl( \displaystyle\int_{B_{t'}}(1+|D u_j|)^{2q-p}dx \biggr)^{\frac{2q-p-2}{2q-p}} \notag\\
\leq & \varepsilon \displaystyle\int_{\Omega} \eta^2 |\tau_hDu_j|^2 (\mu^2+|Du_j(x+h)|^2+|Du_j(x)|^2)^{\frac{p-2}{2}}dx \notag\\
&+ C_{\varepsilon}(L,n,p,q)|h|^{2\gamma}[D \psi]^{2q-p}_{B^{\gamma}_{2q-p, \infty}(B_R)}+  C_{\varepsilon}(L,n,p,q)|h|^{2\gamma} \displaystyle\int_{B_{t'}}(1+|D u_j|)^{2q-p}dx.  \notag
\end{align}
Therefore, from \eqref{dis}, we infer
\begin{align}
|I_2| \leq & \varepsilon \displaystyle\int_{\Omega} \eta^2 |\tau_hDu_j|^2 (\mu^2+|Du_j(x+h)|^2+|Du_j(x)|^2)^{\frac{p-2}{2}}dx \notag\\
&+ C_{\varepsilon}(L,n,p,q)|h|^{2\gamma}[D \psi]^{2q-p}_{B^{\gamma}_{2q-p, \infty}(B_R)} + C_{\varepsilon}(L,n,p,q)|h|^{2\gamma}\biggl( \displaystyle\int_{B_{t'}}(1+|D u_j|)^{\frac{r(2q-p)}{r-2}}dx \biggr)^{\frac{r-2}{r}} . \label{integI2}
\end{align}

Arguing analogously, we get
\begin{align}
|I_{3}|  \leq & 2L \displaystyle\int_{\Omega} |D \eta| \eta |\tau_{h} Du_j| (1 + |Du_j(x+h)|^{2}+|Du_j(x)|^{2})^{\frac{q-2}{2}}|\tau_{h}(u_j-\psi)| dx \notag\\
\leq & \varepsilon \displaystyle\int_{\Omega} \eta^2 |\tau_hDu_j|^2 (\mu^2+|Du_j(x+h)|^2+|Du_j(x)|^2)^{\frac{p-2}{2}}dx \notag\\
&+ \dfrac{C_{\varepsilon}(L)}{(t-s)^2} \displaystyle\int_{B_t} |\tau_h(u_j- \psi)|^2(\mu^2+ |Du_j(x+h)|^2+|Du_j(x)|^2)^{\frac{2q-p-2}{2}}dx \notag\\
\leq & \varepsilon \displaystyle\int_{\Omega} \eta^2 |\tau_hDu_j|^2 (\mu^2+|Du_j(x+h)|^2+|Du_j(x)|^2)^{\frac{p-2}{2}}dx \notag\\
&+\dfrac{C_{\varepsilon}(L)}{(t-s)^2} \biggl( \displaystyle\int_{B_t}|\tau_h \psi|^{2q-p}dx \biggr)^{\frac{2}{2q-p}}  \biggl( \displaystyle\int_{B_{t'}}(1+|D u_j|)^{2q-p}dx \biggr)^{\frac{2q-p-2}{2q-p}} \notag\\ 
&+\dfrac{C_{\varepsilon}(L)}{(t-s)^2} \biggl( \displaystyle\int_{B_t}|\tau_h u_j|^{2q-p}dx \biggr)^{\frac{2}{2q-p}}  \biggl( \displaystyle\int_{B_{t'}}(1+|D u_j|)^{2q-p}dx \biggr)^{\frac{2q-p-2}{2q-p}}. \notag
\end{align}
Using Young's inequality and Lemma \ref{ldiff}, we obtain
\begin{align}
|I_{3}| \leq & \varepsilon \displaystyle\int_{\Omega} \eta^2 |\tau_hDu_j|^2 (\mu^2+|Du_j(x+h)|^2+|Du_j(x)|^2)^{\frac{p-2}{2}}dx \notag\\
&+\dfrac{C_{\varepsilon}(L,n,p,q)}{(t-s)^2}|h|^2 \biggl( \displaystyle\int_{B_R}|D \psi|^{2q-p}dx \biggr)^{\frac{2}{2q-p}}  \biggl( \displaystyle\int_{B_{t'}}(1+|D u_j|)^{2q-p}dx \biggr)^{\frac{2q-p-2}{2q-p}} \notag\\ 
&+\dfrac{C_{\varepsilon}(L,n,p,q)}{(t-s)^2} |h|^2 \displaystyle\int_{B_{t'}}(1+|D u_j|)^{2q-p}dx \notag\\
\leq & \varepsilon \displaystyle\int_{\Omega} \eta^2 |\tau_hDu_j|^2 (\mu^2+|Du_j(x+h)|^2+|Du_j(x)|^2)^{\frac{p-2}{2}}dx \notag\\
&+\dfrac{C_{\varepsilon}(L,n,p,q)}{(t-s)^2}|h|^2 \displaystyle\int_{B_R}|D \psi|^{2q-p}dx  \notag\\
&+\dfrac{C_{\varepsilon}(L,n,p,q)}{(t-s)^2} |h|^2 \displaystyle\int_{B_{t'}}(1+|D u_j|)^{2q-p}dx .
\end{align}
Recalling the first inequality of \eqref{dis}, we can write
\begin{align}
|I_3| \leq& \varepsilon \displaystyle\int_{\Omega} \eta^2 |\tau_hDu_j|^2 (\mu^2+|Du_j(x+h)|^2+|Du_j(x)|^2)^{\frac{p-2}{2}}dx \notag\\
&+\dfrac{C_{\varepsilon}(L,n,p,q)}{(t-s)^2}|h|^2 \displaystyle\int_{B_R}|D \psi|^{2q-p}dx  \notag\\
&+\dfrac{C_{\varepsilon}(L,n,p,q)}{(t-s)^2} |h|^2 \biggl( \displaystyle\int_{B_{t'}}(1+|D u_j|)^{\frac{r(2q-p)}{r-2}}dx \biggr)^{\frac{r-2}{r}} . \label{integI3}
\end{align}

In order to estimate the integral $I_4$, we use assumption (A4), and Young's and H\"{o}lder's inequalities as follows
\begin{align}
|I_4| \leq & \displaystyle\int_{\Omega} \eta^2 |\tau_h Du_j| |h|^{\alpha} (k(x+h)+k(x))(1+|Du_j(x)|)^{\frac{q-1}{2}}dx \notag\\
\leq & \varepsilon \displaystyle\int_{\Omega} \eta^2 |\tau_hDu_j|^2 (\mu^2+|Du_j(x+h)|^2+|Du_j(x)|^2)^{\frac{p-2}{2}}dx \notag\\
&+ C_{\varepsilon}|h|^{2 \alpha} \displaystyle\int_{B_t} (k(x+h)+k(x))^2(1+|Du_j|)^{2q-p}dx \notag\\
\leq & \varepsilon \displaystyle\int_{\Omega} \eta^2 |\tau_hDu_j|^2 (\mu^2+|Du_j(x+h)|^2+|Du_j(x)|^2)^{\frac{p-2}{2}}dx \notag\\
&+ C_{\varepsilon}|h|^{2 \alpha} \biggl( \displaystyle\int_{B_{R}}k^r dx \biggr)^{\frac{2}{r}} \biggl(  \displaystyle\int_{B_t}(1+|Du_j|)^{\frac{r(2q-p)}{r-2}}dx\biggr)^{\frac{r-2}{r}}. \label{integI4}
\end{align}

We now take care of $I_5$. Similarly as above, exploiting assumption (A4) and H\"{o}lder's inequality, we infer
\begin{align*}
|I_5| 
\leq & \io \eta^2 |\tau_h D\psi| |h|^\alpha \left( k(x+h)+k(x)\right) \left( 1+|Du_j|^2\right)^{\frac{q-1}{2}} dx\\
\leq & |h|^\alpha \left( \ibtt k^r dx \right)^\frac{1}{r} \left( \ibt |\tau_h D\psi|^\frac{r}{r-1}(1+|Du_j|)^\frac{r(q-1)}{r-1} dx \right)^\frac{r-1}{r}\\
\leq & |h|^\alpha \left( \ibR k^r  dx \right)^\frac{1}{r} \left( \ibt |\tau_h D\psi|^{2q-p} dx \right)
^\frac{1}{2q-p}\left( \ibt(1+|Du_j|)^\frac{r(q-1)(2q-p)}{(r-1)(2q-p)-r} dx \right)^\frac{(r-1)(2q-p)-r}{r(2q-p)}.
\end{align*}
Now, we observe
\begin{align}\label{disI5}
\frac{r(q-1)(2q-p)}{(r-1)(2q-p)-r}\leq \frac{r(2q-p)}{r-2} \Leftrightarrow p-2 + r (q-p) \geq 0 ,
\end{align}
which is true by assumption, that is $p \geq 2$, $r > \frac{n}{\alpha}> 2$ and $q>p$.
Hence
\begin{align}\label{integI5}
|I_5| \leq |h|^{\alpha+\gamma} \left( \ibR k^r  dx \right)^\frac{1}{r} [D \psi]_{B^{\gamma}_{2q-p, \infty}(B_R)}\left( \ibt(1+|Du_j|)^\frac{r(2q-p)}{r-2} dx \right)^\frac{(r-2)(q-1)}{r(2q-p)}.
\end{align}
%and to hypothesis $D\psi \in B^\gamma_{2q-p,\infty}(B_R)$,

From assumption (A4), hypothesis $|D\eta| < \frac{C}{t-s}$ and H\"{o}lder's inequality, we infer the following estimate for $I_6$.

\begin{align*}
|I_6| \leq &\frac{C}{t-s} |h|^\alpha \ibt |\tau_h \psi| (k(x+h)+k(x))(1+|Du_j|^2)^\frac{q-1}{2} dx\\
&+\frac{C}{t-s} |h|^\alpha \ibt |\tau_h u_j| (k(x+h)+k(x))(1+|Du_j|^2)^\frac{q-1}{2} dx\\
\leq &\frac{C}{t-s} |h|^\alpha \left(\ibR k^r dx \right)^\frac{1}{r} \left(\ibt |\tau_h \psi|^{2q-p} dx\right)^\frac{1}{2q-p}\\
&\cdot \left( \ibt (1+|Du_j|)^\frac{r(q-1)(2q-p)}{(r-1)(2q-p)-r}dx \right)^\frac{(r-1)(2q-p)-r}{r(2q-p)}\\
&+\frac{C}{t-s} |h|^\alpha \left(\ibtt k^r dx \right)^\frac{1}{r} \left( \ibt |\tau_h u_j|^\frac{r}{r-1} (1+|Du_j|)^\frac{r(q-1)}{r-1} dx\right)^\frac{r-1}{r}.
\end{align*}
Using once again H\"{o}lder's inequality, we have
\begin{align*}
|I_6| \leq &\frac{C}{t-s} |h|^{\alpha +1}\left(\ibR k^r dx \right)^\frac{1}{r} \left(\ibtt |D\psi|^{2q-p} dx\right)^\frac{1}{2q-p}\\
&\cdot \left( \ibt (1+|Du_j|)^\frac{r(q-1)(2q-p)}{(r-1)(2q-p)-r}dx \right)^\frac{(r-1)(2q-p)-r}{r(2q-p)}\\
&+\frac{C}{t-s} |h|^\alpha \left(\ibR k^r dx \right)^\frac{1}{r} \left( \ibt |\tau_h u_j|^\frac{rq}{r-1} dx\right)^\frac{r-1}{rq} \left( \ibt (1+|Du_j|)^\frac{rq}{r-1} dx\right)^\frac{(r-1)(q-1)}{rq}.
\end{align*}
Using Lemma \ref{ldiff}, we infer
\begin{align*}
|I_6| \leq &\frac{C}{t-s} |h|^{\alpha +1}\left(\ibR k^r dx \right)^\frac{1}{r} \left(\ibR |D\psi|^{2q-p} dx\right)^\frac{1}{2q-p}\\
&\cdot \left( \ibt (1+|Du_j|)^\frac{r(2q-p)}{r-2}dx \right)^\frac{(q-1)(r-2)}{r(2q-p)}\\
&+\frac{C}{t-s} |h|^{\alpha+1} \left(\ibR k^r dx \right)^\frac{1}{r}  \left( \ibtt (1+|Du_j|)^\frac{rq}{r-1} dx\right)^\frac{r-1}{rq}.
\end{align*}

We remark that 
\begin{equation}\label{dis2}
\dfrac{rq}{r-1} \leq \dfrac{r(2q-p)}{r-2}  \Leftrightarrow p+r(q-p) \geq 0,
\end{equation}
which is true by assumption, that is $p \geq 2$, $r > \frac{n}{\alpha}> 2$ and $q>p$.
Hence
\begin{align}\nonumber
|I_6|  \leq &\frac{C}{t-s}|h|^{\alpha +1}\left(\ibR k^r dx \right)^\frac{1}{r} \left(\ibR |D\psi|^{2q-p} dx\right)^\frac{1}{2q-p}   \\ \nonumber
&\cdot \left( \ibt (1+|Du_j|)^\frac{r(2q-p)}{r-2}dx \right)^\frac{(q-1)(r-2)}{r(2q-p)}\\ \label{integI6}
&+\frac{C}{t-s} |h|^{\alpha+1} \left(\ibR k^r dx \right)^\frac{1}{r} \left( \ibtt (1+|Du_j|)^\frac{r(2q-p)}{r-2} dx\right)^\frac{q(r-2)}{r(2q-p)}. 
\end{align}

Inserting estimates \eqref{I1}, \eqref{integI2}, \eqref{integI3}, \eqref{integI4}, \eqref{integI5} and \eqref{integI6} in \eqref{2:8}, we infer
\begin{align}\label{estimate1.}
 \nu \displaystyle\int_{\Omega}  \eta^{2} & |\tau_{h}Du_j|^{2}(\mu^2 + |Du_j(x+h)|^{2}+|Du_j(x)|^{2})^{\frac{p-2}{2}} dx \notag\\
 \leq & 3\varepsilon \displaystyle\int_{\Omega} \eta^2 |\tau_hDu_j|^2 (\mu^2+|Du_j(x+h)|^2+|Du_j(x)|^2)^{\frac{p-2}{2}}dx \notag\\
 &+ C_{\varepsilon}(L,n,p,q)|h|^{2\gamma}[D \psi]^{2q-p}_{B^{\gamma}_{2q-p, \infty}(B_R)} + C_{\varepsilon}(L,n,p,q)|h|^{2\gamma}\biggl( \displaystyle\int_{B_{t'}}(1+|D u_j|)^{\frac{r(2q-p)}{r-2}}dx \biggr)^{\frac{r-2}{r}} \notag\\
 &+\dfrac{C_{\varepsilon}(L,n,p,q)}{(t-s)^2}|h|^2 \displaystyle\int_{B_R}|D \psi|^{2q-p}dx  \notag\\
&+\dfrac{C_{\varepsilon}(L,n,p,q)}{(t-s)^2} |h|^2 \biggl( \displaystyle\int_{B_{t'}}(1+|D u_j|)^{\frac{r(2q-p)}{r-2}}dx \biggr)^{\frac{r-2}{r}} \notag\\
&+ C_{\varepsilon}|h|^{2 \alpha} \biggl( \displaystyle\int_{B_{R}}k^r dx \biggr)^{\frac{2}{r}} \biggl(  \displaystyle\int_{B_t}(1+|Du_j|)^{\frac{r(2q-p)}{r-2}}dx\biggr)^{\frac{r-2}{r}} \notag\\
& + |h|^{\alpha+\gamma} \left( \ibR k^r  dx \right)^\frac{1}{r} [D \psi]_{B^{\gamma}_{2q-p, \infty}(B_R)}\left( \ibt(1+|Du_j|)^\frac{r(2q-p)}{r-2} dx \right)^\frac{(r-2)(q-1)}{r(2q-p)} \notag\\
&+ \frac{C}{t-s}|h|^{\alpha +1}\left(\ibR k^r dx \right)^\frac{1}{r} \left(\ibR |D\psi|^{2q-p} dx\right)^\frac{1}{2q-p}    \notag \\
&\cdot \left( \ibt (1+|Du_j|)^\frac{r(2q-p)}{r-2}dx \right)^\frac{(q-1)(r-2)}{r(2q-p)} \notag\\ 
&+\frac{C}{t-s} |h|^{\alpha+1} \left(\ibR k^r dx \right)^\frac{1}{r} \left( \ibtt (1+|Du_j|)^\frac{r(2q-p)}{r-2} dx\right)^\frac{q(r-2)}{r(2q-p)}.
\end{align}
We now introduce the following interpolation inequality
\begin{align}\label{interp1}
\Vert Dw \Vert_{\frac{r(2q-p)}{r-2}} \leq \Vert Dw \Vert_p^{\delta} \Vert Dw \Vert_{\frac{np}{n-2 \beta}}^{1- \delta},
\end{align}
where $0< \delta <1$ is defined through the condition
\begin{align}\label{defdelta1}
\dfrac{r-2}{r(2q-p)}= \dfrac{\delta}{p}+ \dfrac{(1-\delta)(n-2 \beta)}{np}
\end{align}
which implies
$$\delta= \dfrac{nr(p-q)-np+\beta r(2q-p)}{\beta r(2q-p)}, \ \ \ 1- \delta= \dfrac{n[r(q-p)+p]}{\beta r(2q-p)}.$$
Hence we get the following inequalities
\begin{align}\label{interpol}
\biggr(\displaystyle\int_{B_{t'}}(1+ |Du_j|)^{\frac{r(2q-p)}{r-2}}dx \biggl)^{\frac{r-2}{r}} \leq & \biggl( \displaystyle\int_{B_{t'}}(1+ |Du_j|)^{p}dx \biggr)^{\frac{\delta(2q-p)}{p}} \notag\\
& \cdot  \biggl( \displaystyle\int_{B_{t'}}(1+ |Du_j|)^{\frac{np}{n-2 \beta}}dx \biggr)^{\frac{(n-2 \beta)[r(q-p)+p]}{\beta pr}},\\
\left( \ibt(1+|Du_j|)^\frac{r(2q-p)}{r-2} dx \right)^\frac{(r-2)(q-1)}{r(2q-p)} 
\leq &\left( \ibt(1+|Du_j|)^p dx \right)^\frac{\delta(q-1)}{p} \notag\\
&\cdot\left( \ibt(1+|Du_j|)^\frac{np}{n-2\beta} dx \right)^\frac{(n-2\beta)(q-1)p'}{p} , \label{int5} \\
\biggr(\displaystyle\int_{B_{t'}}(1+ |Du_j|)^{\frac{r(2q-p)}{r-2}}dx \biggl)^{\frac{q(r-2)}{r(2q-p)}} \leq & \biggl( \displaystyle\int_{B_{t'}}(1+ |Du_j|)^{p}dx \biggr)^{\frac{\delta q}{p}} \notag\\
& \cdot  \biggl( \displaystyle\int_{B_{t'}}(1+ |Du_j|)^{\frac{np}{n-2 \beta}}dx \biggr)^{\frac{(n-2 \beta)q[r(q-p)+p]}{\beta pr(2q-p)}}, \label{int6}
\end{align}
where $p' = \frac{r(q-p)+p}{\beta r (2q-p)}$.

Inserting \eqref{interpol}, \eqref{int5} and \eqref{int6} in \eqref{estimate1.}, and exploiting the bounds 
\begin{gather}\label{ineq.1}
\dfrac{n[r(q-p)+p]}{\beta pr} <1, \quad \frac{n(q-1)[r(q-p)+p]}{\beta r p (2q-p)}<1, \quad \dfrac{nq[r(q-p)+p]}{\beta  pr(2q-p)} <1,
\end{gather}
which hold by assumption \eqref{gap} and for $\beta \in (\frac{n[r(q-p)+p]}{pr}, \alpha)$, from Young's inequality, we infer
\begin{align}
 \nu \displaystyle\int_{\Omega}  \eta^{2} & |\tau_{h}Du_j|^{2}(\mu^2 + |Du_j(x+h)|^{2}+|Du_j(x)|^{2})^{\frac{p-2}{2}} dx \notag\\
 \leq & 3\varepsilon \displaystyle\int_{\Omega} \eta^2 |\tau_hDu_j|^2 (\mu^2+|Du_j(x+h)|^2+|Du_j(x)|^2)^{\frac{p-2}{2}}dx \notag\\
&+ C_{\varepsilon}(L,n,p,q)|h|^{2\gamma}[D \psi]^{2q-p}_{B^{\gamma}_{2q-p, \infty}(B_{R/2})}+ C_{\varepsilon, \theta}(L,n,p,q)|h|^{2 \gamma} \biggl( \displaystyle\int_{B_{R}}(1+ |Du_j|)^{p}dx \biggr)^{\frac{\delta(2q-p)\tilde{p}}{p}} \notag\\
&+ \theta |h|^{2 \gamma}\biggl( \displaystyle\int_{B_{t'}}(1+|D u_j|)^{\frac{np}{n-2 \beta}}dx \biggr)^{\frac{n-2 \beta}{n}}\notag\\
&+\dfrac{C_{\varepsilon}(L,n,p,q)}{(t-s)^2}|h|^2  \displaystyle\int_{B_R}|D \psi|^{2q-p}dx  \notag\\
&+ \theta |h|^2 \biggl( \displaystyle\int_{B_{t'}}(1+|D u_j|)^{\frac{np}{n-2 \beta}}dx \biggr)^{\frac{n-2 \beta}{n}} \notag\\ 
&+\dfrac{C_{\varepsilon, \theta}(L,n,p,q)}{(t-s)^{2\tilde{p}}} |h|^2 \biggl( \displaystyle\int_{B_{R}}(1+|D u_j|)^{p}dx \biggr)^{\frac{\tilde{p}\delta(2q-p)}{p}} \notag\\
&+ C_{\varepsilon, \theta}(n,p,q)|h|^{2 \alpha} \biggr( \displaystyle\int_{B_R} k^r dx \biggl)^{\frac{2 \tilde{p}}{r}} \notag\\
& \cdot \biggl( \displaystyle\int_{B_{R}}(1+|D u_j|)^{p}dx \biggr)^{\frac{\tilde{p}\delta (2q-p)}{p}}\notag\\
&+ \theta |h|^{2 \alpha} \biggl( \displaystyle\int_{B_{t'}}(1+|D u_j|)^{\frac{np}{n-2 \beta}}dx \biggr)^{\frac{n-2 \beta}{n}}\notag\\
&+ C_{\theta}|h|^{\alpha+\gamma} \left( \ibR k^r  dx \right)^\frac{p''}{r} \notag\\ \nonumber
&\cdot[D \psi]^{p''}_{B^{\gamma}_{2q-p, \infty}(B_{R/2})} \notag\\ \nonumber
&\cdot \left( \ibR(1+|Du_j|)^p dx \right)^\frac{\delta(q-1)(2q-p)p''}{p} \notag\\
&+\theta |h|^{\alpha+\gamma}\left( \ibt(1+|Du_j|)^\frac{np}{n-2\beta} dx \right)^\frac{n-2\beta}{n} \notag\\
&+ \frac{C_\theta}{(t-s)^{p''}}|h|^{\alpha +1}\left(\ibR k^r dx \right)^\frac{p''}{r} \left(\ibR |D\psi|^{2q-p} dx\right)^\frac{p''}{2q-p} \notag \\ \nonumber
&\cdot \left( \ibR (1+|Du_j|)^p dx \right)^\frac{\delta(q-1)p''}{p}\\ \nonumber
&+\theta |h|^{\alpha +1}\left( \ibt (1+|Du_j|)^\frac{np}{n-2\beta} dx \right)^\frac{n-2\beta}{n} \notag\\
&+\frac{C_\theta}{(t-s)^{p^*}} |h|^{\alpha+1} \left(\ibR k^r dx \right)^\frac{p^*}{r} \left( \ibR (1+|Du_j|)^{p} dx\right)^{\frac{p^* \delta q}{p}} \notag\\
&+ \theta |h|^{\alpha+1} \left( \ibtt (1+|Du_j|)^\frac{np}{n-2\beta} dx \right)^\frac{n-2\beta}{n}.
\end{align}
for some constant $\theta \in (0,1)$, where we set $\tilde{p}= \frac{\beta p r}{\beta pr -n[r(q-p)+p]}$, $p'' = \frac{\beta r p (2q-p)}{\beta r p (2q-p)-(q-1)n[r(q-p)+p]}$, $p^* = \frac{p}{p  -(1-\delta)q}$.\\
For a better readability we now define
\begin{align*}
A=  & C_{\varepsilon}(L,n,p,q)[D \psi]^{2q-p}_{B^{\gamma}_{2q-p, \infty}(B_{R/2})}+ C_{\varepsilon, \theta}(L,n,p,q)\biggl( \displaystyle\int_{B_{R}}(1+ |Du_j|)^{p}dx \biggr)^{\frac{\delta(2q-p)\tilde{p}}{p}} \notag\\
&+ C_{\varepsilon, \theta}(n,p,q) \biggr( \displaystyle\int_{B_R} k^r dx \biggl)^{\frac{2 \tilde{p}}{r}}  \biggl( \displaystyle\int_{B_{R}}(1+|D u_j|)^{p}dx \biggr)^{\frac{\tilde{p}\delta (2q-p)}{p}} \notag \\
&+  C_{\theta} \left( \ibR k^r  dx \right)^\frac{p''}{r} [D \psi]^{p''}_{B^{\gamma}_{2q-p, \infty}(B_{R/2})}  \left( \ibR(1+|Du_j|)^p dx \right)^\frac{\delta(q-1)(2q-p)p''}{p} \notag\\
B_1=& C_{\varepsilon}(L,n,p,q) \displaystyle\int_{B_R}|D \psi|^{2q-p}dx ,\\
B_2= &C_{\varepsilon, \theta}(L,n,p,q)  \biggl( \displaystyle\int_{B_{R}}(1+|D u_j|)^{p}dx \biggr)^{\frac{\tilde{p}\delta(2q-p)}{p}} ,\\
B_3 = &C_\theta\left(\ibR k^r dx \right)^\frac{p''}{r} \left(\ibR |D\psi|^{2q-p} dx\right)^\frac{p''}{2q-p} \left( \ibR (1+|Du_j|)^p dx \right)^\frac{\delta(q-1)p''}{p},  \\
B_4 = & C_\theta \left(\ibR k^r dx \right)^\frac{p^*}{r} \left( \ibR (1+|Du_j|)^{p} dx\right)^{\frac{p^* \delta q}{p}}, 
\end{align*}
so that we can rewrite the previous estimate as
\begin{align}
 \nu \displaystyle\int_{\Omega}  \eta^{2} & |\tau_{h}Du_j|^{2}(\mu^2 + |Du_j(x+h)|^{2}+|Du_j(x)|^{2})^{\frac{p-2}{2}} dx \notag\\
 \leq &  3\varepsilon \displaystyle\int_{\Omega} \eta^2 |\tau_hDu_j|^2 (\mu^2+|Du_j(x+h)|^2+|Du_j(x)|^2)^{\frac{p-2}{2}}dx \notag\\
 &+  \theta (|h|^{2 \alpha}+|h|^{\alpha + \gamma}+|h|^{\alpha +1} ) \left( \ibt (1+|Du_j|)^\frac{np}{n-2\beta} dx \right)^\frac{n-2\beta}{n} \notag\\
 &+ \theta (|h|^2+|h|^{2 \gamma}+|h|^{\alpha +1} ) \left( \ibtt (1+|Du_j|)^\frac{np}{n-2\beta} dx \right)^\frac{n-2\beta}{n} \notag\\
 &+ (|h|^{2 \gamma}+|h|^{2 \alpha}+|h|^{\alpha + \gamma})A + |h|^2\dfrac{B_1}{(t-s)^2}+ |h|^2\dfrac{B_2}{(t-s)^{2\tilde{p}}} \notag\\
 & + |h|^{\alpha+1}\dfrac{B_3}{(t-s)^{p^{''}}}+ |h|^{\alpha +1}\dfrac{B_4}{(t-s)^{p^*}} .\notag
\end{align}
Choosing $\varepsilon= \frac{\nu}{6}$, we can reabsorb the first integral in the right hand side of the previous estimate by the left hand side, thus getting
\begin{align}
 \displaystyle\int_{\Omega}  \eta^{2} & |\tau_{h}Du_j|^{2}(\mu^2 + |Du_j(x+h)|^{2}+|Du_j(x)|^{2})^{\frac{p-2}{2}} dx \notag\\
 \leq & 3 \theta |h|^{2 \alpha} \left( \ibt (1+|Du_j|)^\frac{np}{n-2\beta} dx \right)^\frac{n-2\beta}{n} + 3\theta |h|^{2 \alpha} \left( \ibtt (1+|Du_j|)^\frac{np}{n-2\beta} dx \right)^\frac{n-2\beta}{n} \notag\\
 &+ |h|^{2 \alpha}A + |h|^2\dfrac{B_1}{(t-s)^2}+ |h|^2\dfrac{B_2}{(t-s)^{\tilde{p}}}  + |h|^{2\alpha}\dfrac{B_3}{(t-s)^{p^{''}}}+ |h|^{2\alpha }\dfrac{B_4}{(t-s)^{p^*}} ,\notag
\end{align}
where we used the fact that $\alpha < \gamma$. Using Lemma \ref{D1} in the left hand side of the previous inequality, recalling that $\eta = 1$ on $B_s$, we get
\begin{align}
 \displaystyle\int_{B_s}  |\tau_h V_p(Du_j)|^2 dx \leq  |h|^{2 \alpha} \biggl\{ & 3 \theta  \left( \ibt (1+|Du_j|)^\frac{np}{n-2\beta} dx \right)^\frac{n-2\beta}{n} + 3\theta  \left( \ibtt (1+|Du_j|)^\frac{np}{n-2\beta} dx \right)^\frac{n-2\beta}{n}  \notag\\
 &+ A + \dfrac{B_1}{(t-s)^2}+ \dfrac{B_2}{(t-s)^{\tilde{p}}}  + \dfrac{B_3}{(t-s)^{p^{''}}}+ \dfrac{B_4}{(t-s)^{p^*}} \biggr\}. \label{tauVp}  
\end{align}
Lemma \ref{Besovlem} and inequality \eqref{Vp} imply
\begin{align}
\biggl( \displaystyle\int_{B_s}  |Du_j|^{\frac{np}{n- 2 \beta }} dx \biggr)^{\frac{n- 2 \beta}{n}} \leq & 3 \theta  \left( \ibt (1+|Du_j|)^\frac{np}{n-2\beta} dx \right)^\frac{n-2\beta}{n} + 3\theta  \left( \ibtt (1+|Du_j|)^\frac{np}{n-2\beta} dx \right)^\frac{n-2\beta}{n} \notag\\
 &+ A + \dfrac{B_1}{(t-s)^2}+ \dfrac{B_2}{(t-s)^{2\tilde{p}}}  + \dfrac{B_3}{(t-s)^{p^{''}}}+ \dfrac{B_4}{(t-s)^{p^*}} ,\label{gradest}
\end{align}
for all $\beta \in (0, \alpha )$.\\
Setting
\begin{gather}
\Phi (r) = \biggl( \displaystyle\int_{B_r}  |Du_j|^{\frac{np}{n- 2 \beta }} dx \biggr)^{\frac{n- 2 \beta}{n}}, \notag
\end{gather}
we can write inequality \eqref{gradest} as
\begin{gather}
\Phi (s) \leq 3 \theta \Phi(t) +3 \theta \Phi(t')  + A + \dfrac{B_1}{(t-s)^2}+ \dfrac{B_2}{(t-s)^{2\tilde{p}}}  + \dfrac{B_3}{(t-s)^{p^{''}}}+ \dfrac{B_4}{(t-s)^{p^*}}.
\end{gather}
By virtue of Lemma \ref{lm2}, choosing $0 < \theta < 1/3$, we obtain
\begin{gather}
\Phi (\varrho) \leq c \biggl( 3 \theta \Phi(t')  + A + \dfrac{B_1}{R^2}+ \dfrac{B_2}{R^{2\tilde{p}}}  + \dfrac{B_3}{R^{p^{''}}}+ \dfrac{B_4}{R^{p^*}} \biggr),
\end{gather}
for some constant $c := c(n,p,q,r, \beta, \theta)$. Then, applying Lemma \ref{lm2} again, we get
\begin{gather}
\Phi \biggl( \dfrac{R}{4}  \biggr) \leq \tilde{c} \biggl(  A + \dfrac{B_1}{R^2}+ \dfrac{B_2}{R^{2\tilde{p}}}  + \dfrac{B_3}{R^{p^{''}}}+ \dfrac{B_4}{R^{p^*}} \biggr),
\end{gather}
with $\tilde{c} := \tilde{c}(n,p,q,r ,\beta ,\theta)$.
\\Now, recalling the definition of $\Phi$, we obtain
\begin{gather}\label{apriorigrad}
\biggl( \displaystyle\int_{B_{R/4}}  |Du_j|^{\frac{np}{n- 2 \beta }} dx \biggr)^{\frac{n- 2 \beta}{n}} \leq  \tilde{c}   \biggl\{  \ibR (1+ |Du_j|^p)dx+ \Vert D \psi \Vert_{B^{\gamma}_{2q-p, \infty}(B_{R/2})} \biggl\}^{\kappa},
\end{gather}
thus, using Lemma \ref{Besovlem}, from inequalities \eqref{apriorigrad} and \eqref{tauVp}, we deduce the a priori estimate
\begin{gather}
\displaystyle\int_{B_{R/4}} |\tau_h V_p(Du_j)|^2 dx \leq C |h|^{2\alpha} \biggl\{  \ibR (1+ |Du_j|^p)dx+ \Vert D \psi \Vert_{B^{\gamma}_{2q-p, \infty}(B_{R/2})} \biggl\}^{\kappa},
\end{gather}
for some $\beta < \alpha$, where $C := C(R,n,p,q,r, \beta)$ and $\kappa:= \kappa (n,p,q,r, \beta)$. \endproof

\subsection{Passage to the limit}\label{passagetothelimit}
\label{secproof}
Let $u \in \mathcal{K}_{\psi}(\Omega)$ be a solution to \eqref{1}, and let $F_j$ be defined as in Lemma \ref{apprlem1}. From Theorem \ref{thmdefF}, there exists $c_1 > 0$ such that
\begin{equation}\label{F_j}
|\xi|^p \leq c_1 (1+ F_j(x,\xi)), \quad \forall j \in \mathbb{N}.
\end{equation}
Fixed $B_R \Subset \Omega$, let $u_j$ be the solution of the problem
$$\min \biggl\{ \displaystyle\int_{B_R} F_j(x,Dw) dx :w \geq \psi \ \text{a.e. in} \ B_R, \ w \in u + W^{1,p}_0(B_R)  \biggr\}.$$
From \eqref{F_j}, the minimality of $u_j$ implies
\begin{align}\label{intF_j}
\displaystyle\int_{B_R}|Du_j|^p dx \leq & c_1 \displaystyle\int_{B_R} (1+ F_j(x,Du_j))dx \notag \\
\leq & c_1 \displaystyle\int_{B_R} (1+ F_j(x,Du))dx \notag \\
\leq & c_1 \displaystyle\int_{B_R} (1+ F(x,Du))dx, 
\end{align}
where in the last inequality we used Lemma \ref{apprlem1} $(ii)$. Thus, up to subsequences, 
\begin{equation}\label{weakconv}
u_j \rightharpoonup \tilde{u} \ \text{in} \ u+W^{1,p}_{0}(B_R)
\end{equation}
and
\begin{equation}\label{strongconv}
u_j \rightarrow \tilde{u} \ \text{in} \ L^p(B_R).
\end{equation}
\\For any $j$, $F_j$ satisfies the assumptions of Theorem \ref{approximation}. Combining \eqref{apriorigrad} and \eqref{intF_j} we get
\begin{gather}\label{boundforgrad}
\Vert Du_j\Vert_{L^{\frac{np}{n-2 \beta}}(B_{R/4})} \leq  \tilde{c}   \biggl\{  \ibR (1+ F(x,Du))dx+ \Vert D \psi \Vert_{B^{\gamma}_{2q-p, \infty}(B_R)} \biggl\}^{\tilde{\kappa}},
\end{gather}
thus, by \eqref{weakconv}, \eqref{boundforgrad} and weak lower semicontinuity, we infer
\begin{align}\label{bgrad}
\Vert D\tilde{u} \Vert_{L^{\frac{np}{n-2 \beta}}(B_{R/4})} \leq & \displaystyle\liminf_{j \rightarrow \infty} \Vert Du_j\Vert_{L^{\frac{np}{n-2 \beta}}(B_{R/4})} \notag\\
\leq &\tilde{c}   \biggl\{  \ibR (1+ F(x,Du))dx+ \Vert D \psi \Vert_{B^{\gamma}_{2q-p, \infty}(B_R)} \biggl\}^{\tilde{\kappa}}.
\end{align}
By weak lower semicontinuity of the functional
$v \mapsto \int_{B_R}F_j(x,Dv(x))dx$, \eqref{weakconv}, Lemma \ref{apprlem1} (ii) and minimality of the $u_j$'s, we have
\begin{align}
\ibR F(x,D\tilde{u})dx & \leq \displaystyle\liminf_{j \rightarrow \infty} \ibR  F(x, D u_j) dx \notag \\
& \leq \displaystyle\liminf_{j \rightarrow \infty} \ibR F_j(x,Du_j) dx \notag\\
& \leq \ibR F(x,Du) dx.
\end{align}
Moreover, by the weak convergence \eqref{weakconv}, the limit function $\tilde{u}$ still belongs to $\mathcal{K}_{\psi}(B_R)$, since this set is convex and closed. Thus, we can conclude that
\begin{equation}
    \tilde{u}= u \quad \text{a.e. in} \ B_R
\end{equation}
by strict convexity of $F$, and, recalling estimate \eqref{bgrad},
\begin{align}
\Vert Du \Vert_{L^{\frac{np}{n-2 \beta}}(B_{R/4})} 
\leq \tilde{c}   \biggl\{  \ibR (1+ F(x,Du))dx+ \Vert D \psi \Vert_{B^{\gamma}_{2q-p, \infty}(B_R)} \biggl\}^{\tilde{\kappa}}.
\end{align}
Finally, we can repeat the proof of Theorem \ref{approximation} obtaining $V_p(Du) \in B^{\alpha}_{2, \infty,loc}(\Omega)$.

\section{Proof of Theorem \ref{mainthm2}}\label{secthm2}
This section is devoted to the proof of Theorem \ref{mainthm2}. We here focus only on the derivation of the a priori estimate. Indeed, the limit procedure is achieved using the same arguments presented in Sections \ref{secappr} (cnfr. Remark \ref{remappr}) and \ref{secproof}.\\

We a priori assume that the map $\A$ satisfies appropriate growth conditions so that the integral
$$\displaystyle\int_{\Omega'}(1+ |Du_j|)^{\frac{np}{n-2\lambda}}dx$$
is finite, for every $\Omega' \Subset \Omega$, where we denote $\lambda = \min \{ \alpha, \gamma \}$.

Arguing analogously as in the proof of Theorem \ref{approximation}, we define the integrals $I_1$--$I_6$ according to \eqref{2:7} and we are able to derive estimates \eqref{2:8} and \eqref{I1}. We need to treat differently the integrals $I_2$ -- $I_6$ in which the new assumptions (A5) on the gradient of the obstacle and on the map $x \mapsto \A(x,\xi)$ come into the play.\\
Similarly as we did for \eqref{dis} but using this time \eqref{gap2}, we get
\begin{equation}\label{dis1.}
2q-p \leq \dfrac{r(2q-p)}{r-2} \leq \dfrac{np}{n-2 \lambda}.
\end{equation} 
Consider the integral $I_2$, then according to $L^p$ embeddings and Young's inequality,
\begin{align}\label{integrI2}
|I_{2}|\leq & \varepsilon \displaystyle\int_{\Omega} \eta^2 |\tau_hDu_j|^2 (\mu^2+|Du_j(x+h)|^2+|Du_j(x)|^2)^{\frac{p-2}{2}}dx \notag\\
&+ C_{\varepsilon}(L) \biggl( \displaystyle\int_{B_t}|\tau_h D \psi|^{2q-p}dx \biggr)^{\frac{2}{2q-p}}  \biggl( \displaystyle\int_{B_{t'}}(1+|D u_j|)^{2q-p}dx \biggr)^{\frac{2q-p-2}{2q-p}}\notag\\
\leq & \varepsilon \displaystyle\int_{\Omega} \eta^2 |\tau_hDu_j|^2 (\mu^2+|Du_j(x+h)|^2+|Du_j(x)|^2)^{\frac{p-2}{2}}dx \notag\\
&+ C_{\varepsilon}(L) \biggl( \displaystyle\int_{B_{R/2}}|\tau_h D \psi|^{2q-p}dx \biggr)^{\frac{2}{2q-p}}  \biggl( \displaystyle\int_{B_{t'}}(1+|D u_j|)^{\frac{r(2q-p)}{r-2}}dx \biggr)^{\frac{(r-2)(2q-p-2)}{r(2q-p)}} ,
\end{align}
where in the last inequality we used \eqref{dis1.}.

In order to take care of $I_3$, we are able to perform the same computations which led us to \eqref{integI3}, that is
\begin{align}
|I_3| \leq& \varepsilon \displaystyle\int_{\Omega} \eta^2 |\tau_hDu_j|^2 (\mu^2+|Du_j(x+h)|^2+|Du_j(x)|^2)^{\frac{p-2}{2}}dx \notag\\
&+\dfrac{C_{\varepsilon}(L,n,p,q)}{(t-s)^2}|h|^2 \displaystyle\int_{B_R}|D \psi|^{2q-p}dx  \notag\\
&+\dfrac{C_{\varepsilon}(L,n,p,q)}{(t-s)^2} |h|^2 \biggl( \displaystyle\int_{B_{t'}}(1+|D u_j|)^{\frac{r(2q-p)}{r-2}}dx \biggr)^{\frac{r-2}{r}} . \label{integI3'}
\end{align}

Now, we estimate the integral $I_4$. Assumption (A5), Young's and H\"{o}lder's inequalities yield that
\begin{align}
|I_4| \leq & \displaystyle\int_{\Omega} \eta^2 |\tau_h Du_j| |h|^{\alpha} (g_k(x+h)+g_k(x))(1+|Du_j(x)|)^{\frac{q-1}{2}}dx \notag\\
\leq & \varepsilon \displaystyle\int_{\Omega} \eta^2 |\tau_hDu|^2 (\mu^2+|Du_j(x+h)|^2+|Du_j(x)|^2)^{\frac{p-2}{2}}dx \notag\\
&+ C_{\varepsilon}|h|^{2 \alpha} \displaystyle\int_{B_t} (g_k(x+h)+g_k(x))^2(1+|Du_j|)^{2q-p}dx \notag\\
\leq & \varepsilon \displaystyle\int_{\Omega} \eta^2 |\tau_hDu|^2 (\mu^2+|Du_j(x+h)|^2+|Du_j(x)|^2)^{\frac{p-2}{2}}dx \notag\\
&+ C_{\varepsilon}|h|^{2 \alpha} \biggl( \displaystyle\int_{B_{t}}(g_k(x+h)+g_k(x))^r dx \biggr)^{\frac{2}{r}} \biggl(  \displaystyle\int_{B_t}(1+|Du_j|)^{\frac{r(2q-p)}{r-2}}dx\biggr)^{\frac{r-2}{r}} .
 \label{i4.}
\end{align}
Exploiting assumption (A5) and H\"{o}lder's inequality, we infer the following estimate for the integral $I_5$
\begin{align}\label{integrI5}
|I_5| 
\leq & \io \eta^2 |\tau_h D\psi| |h|^\alpha \left( g_k(x+h)+g_k(x)\right) \left( 1+|Du_j|^2\right)^{\frac{q-1}{2}} dx\notag\\
\leq & |h|^\alpha \left( \ibt (g_k(x+h)+g_k(x))^r  dx \right)^\frac{1}{r} \left( \ibt |\tau_h D\psi|^{2q-p} dx \right)
^\frac{1}{2q-p} \notag\\
& \cdot \left( \ibt(1+|Du_j|)^\frac{r(q-1)(2q-p)}{(r-1)(2q-p)-r} dx \right)^\frac{(r-1)(2q-p)-r}{r(2q-p)} \notag\\
\leq & |h|^\alpha \left( \displaystyle\int_{B_{R/2}}(g_k(x+h)+g_k(x))^r  dx \right)^\frac{1}{r} \left( \displaystyle\int_{B_{R/2}} |\tau_h D\psi|^{2q-p} dx \right)
^\frac{1}{2q-p} \notag\\
& \cdot \left( \ibt(1+|Du_j|)^\frac{r(2q-p)}{r-2} dx \right)^\frac{(r-2)(q-1)}{r(2q-p)},
\end{align}
where in the last inequality we used \eqref{disI5}.

Similarly as above, from assumption (A5), \eqref{disI5}, hypothesis $|D\eta| < \frac{C}{t-s}$ and H\"{o}lder's inequality, we can estimate the integral $I_6$ as follows
\begin{align}\label{integr.I6}
|I_6| \leq &\frac{C}{t-s} |h|^\alpha \ibt |\tau_h \psi| (g_k(x+h)+g_k(x))(1+|Du_j|^2)^\frac{q-1}{2} dx\notag\\
&+\frac{C}{t-s} |h|^\alpha \ibt |\tau_h u_j| (g_k(x+h)+g_k(x))(1+|Du_j|^2)^\frac{q-1}{2} dx\notag\\
\leq &\frac{C}{t-s} |h|^\alpha \left(\ibt (g_k(x+h)+g_k(x))^r dx \right)^\frac{1}{r} \left(\ibt |\tau_h \psi|^{2q-p} dx\right)^\frac{1}{2q-p}\notag\\
&\cdot \left( \ibt (1+|Du_j|)^\frac{r(q-1)(2q-p)}{(r-1)(2q-p)-r}dx \right)^\frac{(r-1)(2q-p)-r}{r(2q-p)}\notag\\
&+\frac{C}{t-s} |h|^\alpha \left(\ibt (g_k(x+h)+g_k(x))^r dx \right)^\frac{1}{r} \left( \ibt |\tau_h u_j|^\frac{rq}{r-1} dx\right)^\frac{r-1}{rq} \notag\\
& \cdot\left( \ibt (1+|Du_j|)^\frac{rq}{r-1} dx\right)^\frac{(r-1)(q-1)}{rq} \notag\\
\leq &\frac{C}{t-s} |h|^{\alpha+1} \left(\displaystyle\int_{B_{R/2}} (g_k(x+h)+g_k(x))^r dx \right)^\frac{1}{r} \left(\ibR |D \psi|^{2q-p} dx\right)^\frac{1}{2q-p}\notag\\
&\cdot \left( \ibt (1+|Du_j|)^\frac{r(2q-p)}{r-2}dx \right)^\frac{(r-2)(q-1)}{r(2q-p)}\notag\\
&+\frac{C}{t-s} |h|^{\alpha+1} \left(\displaystyle\int_{B_{R/2}} (g_k(x+h)+g_k(x))^r dx \right)^\frac{1}{r}  \left( \ibtt (1+|Du_j|)^\frac{rq}{r-1} dx\right)^\frac{r-1}{r}.
\end{align}
Inserting estimates \eqref{I1}, \eqref{integrI2}, \eqref{integI3'}, \eqref{i4.}, \eqref{integrI5} and \eqref{integr.I6} in \eqref{2:8}, we infer
\begin{align}\label{formula}
\nu \displaystyle\int_{\Omega} & \eta^2 |\tau_hDu|^2 (\mu^2+|Du_j(x+h)|^2+|Du_j(x)|^2)^{\frac{p-2}{2}}dx \notag\\
  \leq & 3 \varepsilon \displaystyle\int_{\Omega} \eta^2 |\tau_hDu|^2 (\mu^2+|Du_j(x+h)|^2+|Du_j(x)|^2)^{\frac{p-2}{2}}dx \notag\\
  &+ C_{\varepsilon}(L) \biggl( \displaystyle\int_{B_{R/2}}|\tau_h D \psi|^{2q-p}dx \biggr)^{\frac{2}{2q-p}}  \biggl( \displaystyle\int_{B_{t'}}(1+|D u_j|)^{\frac{r(2q-p)}{r-2}}dx \biggr)^{\frac{(r-2)(2q-p-2)}{r(2q-p)}} \notag \\
  &+\dfrac{C_{\varepsilon}(L,n,p,q)}{(t-s)^2}|h|^2 \displaystyle\int_{B_R}|D \psi|^{2q-p}dx  \notag\\
&+\dfrac{C_{\varepsilon}(L,n,p,q)}{(t-s)^2} |h|^2 \biggl( \displaystyle\int_{B_{t'}}(1+|D u_j|)^{\frac{r(2q-p)}{r-2}}dx \biggr)^{\frac{r-2}{r}}\notag \\
&+ C_{\varepsilon}|h|^{2 \alpha} \biggl( \displaystyle\int_{B_{t}}(g_k(x+h)+g_k(x))^r dx \biggr)^{\frac{2}{r}} \biggl(  \displaystyle\int_{B_t}(1+|Du_j|)^{\frac{r(2q-p)}{r-2}}dx\biggr)^{\frac{r-2}{r}} \notag\\
& +|h|^\alpha \left( \displaystyle\int_{B_{R/2}}(g_k(x+h)+g_k(x))^r  dx \right)^\frac{1}{r} \left( \displaystyle\int_{B_{R/2}} |\tau_h D\psi|^{2q-p} dx \right)
^\frac{1}{2q-p} \notag\\
& \cdot \left( \ibt(1+|Du_j|)^\frac{r(2q-p)}{r-2} dx \right)^\frac{(r-2)(q-1)}{r(2q-p)} \notag\\
 &+\frac{C}{t-s} |h|^{\alpha+1} \left(\displaystyle\int_{B_{R/2}} (g_k(x+h)+g_k(x))^r dx \right)^\frac{1}{r} \left(\ibR |D \psi|^{2q-p} dx\right)^\frac{1}{2q-p} \notag\\
&\cdot \left( \ibt (1+|Du_j|)^\frac{r(2q-p)}{r-2}dx \right)^\frac{(r-2)(q-1)}{r(2q-p)}\notag\\
&+\frac{C}{t-s} |h|^{\alpha+1} \left(\displaystyle\int_{B_{R/2}} (g_k(x+h)+g_k(x))^r dx \right)^\frac{1}{r}  \left( \ibtt (1+|Du_j|)^\frac{rq}{r-1} dx\right)^\frac{r-1}{r} .
\end{align}

Replacing $\beta$ with $\lambda$ in \eqref{interp1}, we get the following interpolation inequality 
\begin{align}\label{interp1.}
\Vert Dw \Vert_{\frac{r(2q-p)}{r-2}} \leq \Vert Dw \Vert_p^{\delta} \Vert Dw \Vert_{\frac{np}{n-2 \lambda}}^{1- \delta},
\end{align}
where $0< \delta <1$ is defined through the condition
\begin{align}\label{defdelta1.}
\dfrac{r-2}{r(2q-p)}= \dfrac{\delta}{p}+ \dfrac{(1-\delta)(n-2 \lambda)}{np}.
\end{align}

Hence, using the interpolation inequality \eqref{interp1.}, from estimate \eqref{formula}, we infer
\begin{align}
\nu \displaystyle\int_{\Omega} & \eta^2 |\tau_hDu|^2 (\mu^2+|Du_j(x+h)|^2+|Du_j(x)|^2)^{\frac{p-2}{2}}dx \notag\\
  \leq & 3 \varepsilon \displaystyle\int_{\Omega} \eta^2 |\tau_hDu|^2 (\mu^2+|Du_j(x+h)|^2+|Du_j(x)|^2)^{\frac{p-2}{2}}dx \notag\\
&+ C_{\varepsilon}(L,p,q) \biggl( \displaystyle\int_{B_{R/2}}|\tau_h D \psi|^{2q-p}dx \biggr)^{\frac{2}{2q-p}}  \biggl( \displaystyle\int_{B_{R}}(1+|D u_j|)^{p}dx \biggr)^{\frac{\delta (2q-p-2)}{p}}  \notag\\
& \cdot \biggl( \displaystyle\int_{B_{t'}}(1+|D u_j|)^{\frac{np}{n-2 \lambda}}dx \biggr)^{\frac{(1-\delta )(n-2 \lambda)(2q-p-2)}{np}} \notag\\
&+\dfrac{C_{\varepsilon}(L,n,p,q)}{(t-s)^2}|h|^2 \displaystyle\int_{B_R}|D \psi|^{2q-p}dx  \notag\\
&+\dfrac{C_{\varepsilon}(L,n,p,q)}{(t-s)^2} |h|^2 \biggl( \displaystyle\int_{B_{R}}(1+|D u_j|)^{p}dx \biggr)^{\frac{\delta(2q-p)}{p}} \notag\\
& \cdot \biggl( \displaystyle\int_{B_{t'}}(1+|D u_j|)^{\frac{np}{n-2 \lambda}}dx \biggr)^{\frac{(1-\delta)(n-2 \lambda)(2q-p)}{np}} \notag\\
&+ C_{\varepsilon}|h|^{2 \alpha} \biggl( \displaystyle\int_{B_{R/2}}(g_k(x+h)+g_k(x))^r dx \biggr)^{\frac{2}{r}} \biggl( \displaystyle\int_{B_{R}}(1+|D u_j|)^{p}dx \biggr)^{\frac{\delta(2q-p)}{p}} \notag\\
& \cdot \biggl( \displaystyle\int_{B_{t}}(1+|D u_j|)^{\frac{np}{n-2 \lambda}}dx \biggr)^{\frac{(1-\delta)(n-2 \lambda)(2q-p)}{np}} \notag\\
&+ |h|^\alpha \left( \displaystyle\int_{B_{R/2}} (g_k(x+h)+g_k(x))^r  dx \right)^\frac{1}{r} \left( \displaystyle\int_{B_{R/2}} |\tau_h D\psi|^{2q-p} dx \right)
^\frac{1}{2q-p} \notag\\
& \cdot  \biggl( \displaystyle\int_{B_{R}}(1+|D u_j|)^{p}dx \biggr)^{\frac{\delta(q-1)}{p}}  \biggl( \displaystyle\int_{B_{t}}(1+|D u_j|)^{\frac{np}{n-2 \lambda}}dx \biggr)^{\frac{(1-\delta)(n-2 \lambda)(q-1)}{np}}\notag\\
&+\frac{C}{t-s} |h|^{\alpha+1} \left(\displaystyle\int_{B_{R/2}} (g_k(x+h)+g_k(x))^r dx \right)^\frac{1}{r} \left(\ibR |D \psi|^{2q-p} dx\right)^\frac{1}{2q-p} \notag\\
&\cdot \biggl( \displaystyle\int_{B_{R}}(1+|D u_j|)^{p}dx \biggr)^{\frac{\delta(q-1)}{p}}  \biggl( \displaystyle\int_{B_{t}}(1+|D u_j|)^{\frac{np}{n-2 \lambda}}dx \biggr)^{\frac{(1-\delta)(n-2 \lambda)(q-1)}{np}}\notag\\
&+\frac{C}{t-s} |h|^{\alpha+1} \left(\displaystyle\int_{B_{R/2}} (g_k(x+h)+g_k(x))^r dx \right)^\frac{1}{r}  \biggl( \displaystyle\int_{B_{R}}(1+|D u_j|)^{p}dx \biggr)^{\frac{\delta q}{p}} \notag\\
& \cdot  \biggl( \displaystyle\int_{B_{t'}}(1+|D u_j|)^{\frac{np}{n-2 \lambda}}dx \biggr)^{\frac{(1-\delta)(n-2 \lambda)q}{np}} .
\end{align}
Choosing $\varepsilon=\frac{\nu}{6}$ yields
\begin{align}
\nu \displaystyle\int_{\Omega} & \eta^2 |\tau_hDu|^2 (\mu^2+|Du_j(x+h)|^2+|Du_j(x)|^2)^{\frac{p-2}{2}}dx \notag\\
  \leq & C \biggl( \displaystyle\int_{B_{R/2}}|\tau_h D \psi|^{2q-p}dx \biggr)^{\frac{2}{2q-p}}  \biggl( \displaystyle\int_{B_{R}}(1+|D u_j|)^{p}dx \biggr)^{\frac{\delta (2q-p-2)}{p}}  \notag\\
& \cdot \biggl( \displaystyle\int_{B_{t'}}(1+|D u_j|)^{\frac{np}{n-2 \lambda}}dx \biggr)^{\frac{(1-\delta )(n-2 \lambda)(2q-p-2)}{np}} \notag\\
&+\dfrac{C}{(t-s)^2}|h|^2 \displaystyle\int_{B_R}|D \psi|^{2q-p}dx  \notag\\
&+\dfrac{C}{(t-s)^2} |h|^2 \biggl( \displaystyle\int_{B_{R}}(1+|D u_j|)^{p}dx \biggr)^{\frac{\delta(2q-p)}{p}} \notag\\
& \cdot \biggl( \displaystyle\int_{B_{t'}}(1+|D u_j|)^{\frac{np}{n-2 \lambda}}dx \biggr)^{\frac{(1-\delta)(n-2 \lambda)(2q-p)}{np}} \notag\\
&+ C|h|^{2 \alpha} \biggl( \displaystyle\int_{B_{R/2}}(g_k(x+h)+g_k(x))^r dx \biggr)^{\frac{2}{r}} \biggl( \displaystyle\int_{B_{R}}(1+|D u_j|)^{p}dx \biggr)^{\frac{\delta(2q-p)}{p}} \notag\\
& \cdot \biggl( \displaystyle\int_{B_{t}}(1+|D u_j|)^{\frac{np}{n-2 \lambda}}dx \biggr)^{\frac{(1-\delta)(n-2 \lambda)(2q-p)}{np}} \notag\\
&+ |h|^\alpha \left( \displaystyle\int_{B_{R/2}} (g_k(x+h)+g_k(x))^r  dx \right)^\frac{1}{r} \left( \displaystyle\int_{B_{R/2}} |\tau_h D\psi|^{2q-p} dx \right)
^\frac{1}{2q-p} \notag\\
& \cdot  \biggl( \displaystyle\int_{B_{R}}(1+|D u_j|)^{p}dx \biggr)^{\frac{\delta(q-1)}{p}}  \biggl( \displaystyle\int_{B_{t}}(1+|D u_j|)^{\frac{np}{n-2 \lambda}}dx \biggr)^{\frac{(1-\delta)(n-2 \lambda)(q-1)}{np}}\notag\\
&+\frac{C}{t-s} |h|^{\alpha+1} \left(\displaystyle\int_{B_{R/2}} (g_k(x+h)+g_k(x))^r dx \right)^\frac{1}{r} \left(\ibR |D \psi|^{2q-p} dx\right)^\frac{1}{2q-p} \notag\\
&\cdot \biggl( \displaystyle\int_{B_{R}}(1+|D u_j|)^{p}dx \biggr)^{\frac{\delta(q-1)}{p}}  \biggl( \displaystyle\int_{B_{t}}(1+|D u_j|)^{\frac{np}{n-2 \lambda}}dx \biggr)^{\frac{(1-\delta)(n-2 \lambda)(q-1)}{np}}\notag\\
&+\frac{C}{t-s} |h|^{\alpha+1} \left(\displaystyle\int_{B_{R/2}} (g_k(x+h)+g_k(x))^r dx \right)^\frac{1}{r}  \biggl( \displaystyle\int_{B_{R}}(1+|D u_j|)^{p}dx \biggr)^{\frac{\delta q}{p}} \notag\\
& \cdot  \biggl( \displaystyle\int_{B_{t'}}(1+|D u_j|)^{\frac{np}{n-2 \lambda}}dx \biggr)^{\frac{(1-\delta)(n-2 \lambda)q}{np}} .
\end{align}
for a positive constant $C := C(L,n,p,q)$.

Using Lemma \ref{D1} in the left hand side of previous estimate, recalling that $\eta =1 $ on $B_s$ and dividing both sides by $|h|^{2 \lambda}$, we get
\begin{align}
\nu \displaystyle\int_{B_s} & \dfrac{|\tau_hV_p(Du_j)|^2}{|h|^{2 \lambda}} dx \notag\\
  \leq 
& C \biggl( \displaystyle\int_{B_{R/2}}\dfrac{|\tau_h D \psi|^{2q-p}}{|h|^{\lambda(2q-p)}}dx \biggr)^{\frac{2}{2q-p}}  \biggl( \displaystyle\int_{B_{R}}(1+|D u_j|)^{p}dx \biggr)^{\frac{\delta (2q-p-2)}{p}} \notag\\
& \cdot  \biggl( \displaystyle\int_{B_{t'}}(1+|D u_j|)^{\frac{np}{n-2 \lambda}}dx \biggr)^{\frac{(1-\delta )(n-2 \lambda)(2q-p-2)}{np}} \notag\\
&+\dfrac{C}{(t-s)^2}|h|^{2(1-\lambda)}  \biggl\{\displaystyle\int_{B_R}|D \psi|^{2q-p}dx  + \biggl( \displaystyle\int_{B_{R}}(1+|D u_j|)^{p}dx \biggr)^{\frac{\delta(2q-p)}{p}} \notag\\
& \cdot \biggl( \displaystyle\int_{B_{t'}}(1+|D u_j|)^{\frac{np}{n-2 \lambda}}dx \biggr)^{\frac{(1-\delta)(n-2 \lambda)(2q-p)}{np}} \biggr\} \notag\\
&+ C|h|^{2 (\alpha-\lambda)} \biggl( \displaystyle\int_{B_{R/2}}(g_k(x+h)+g_k(x))^r dx \biggr)^{\frac{2}{r}} \biggl( \displaystyle\int_{B_{R}}(1+|D u_j|)^{p}dx \biggr)^{\frac{\delta (2q-p)}{p}} \notag\\
& \cdot\biggl( \displaystyle\int_{B_{t}}(1+|D u_j|)^{\frac{np}{n-2 \lambda}}dx \biggr)^{\frac{(1-\delta)(n-2 \lambda)(2q-p)}{np}} \notag\\
&+ C|h|^{\alpha-\lambda} \left( \displaystyle\int_{B_{R/2}} (g_k(x+h)+g_k(x))^r  dx \right)^\frac{1}{r} \left( \displaystyle\int_{B_{R/2}} \dfrac{|\tau_h D\psi|^{2q-p}}{|h|^{\lambda (2q-p)}} dx \right)^\frac{1}{2q-p} \notag\\
&\cdot \biggl( \displaystyle\int_{B_{R}}(1+|D u_j|)^{p}dx \biggr)^{\frac{\delta (q-1)}{p}} \biggl( \displaystyle\int_{B_{t}}(1+|D u_j|)^{\frac{np}{n-2 \lambda}}dx \biggr)^{\frac{(1-\delta)(n-2 \lambda)(q-1)}{np}}\notag\\
&+\frac{C}{t-s} |h|^{\alpha+1-2 \lambda} \left(\displaystyle\int_{B_{R/2}} (g_k(x+h)+g_k(x))^r dx \right)^\frac{1}{r} \notag\\
& \cdot \biggl\{ \biggl( \displaystyle\int_{B_{R}}(1+|D u_j|)^{p}dx \biggr)^{\frac{\delta(q-1)}{p}}  \biggl( \displaystyle\int_{B_{t}}(1+|D u_j|)^{\frac{np}{n-2 \lambda}}dx \biggr)^{\frac{(1-\delta)(n-2 \lambda)(q-1)}{np}} \notag\\
  &+ \biggl( \displaystyle\int_{B_{R}}(1+|D u_j|)^{p}dx \biggr)^{\frac{\delta q}{p}} \biggl( \displaystyle\int_{B_{t'}}(1+|D u_j|)^{\frac{np}{n-2 \lambda}}dx \biggr)^{\frac{(1-\delta)(n-2 \lambda)q}{np}} \biggr\}.
\end{align}

We need now to take the $L^\sigma$ norm with the measure $\frac{d h}{|h|^n}$ restricted to the ball $B(0,R/4)$ on the $h$-space of the $L^2$ norm of the difference quotient of order $\lambda$ of the function $V_p(Du_j)$. Since the functions $g_k$ are defined for $2^{-k}R/4 \leq |h| \leq 2^{-k+1}R/4$ we interpret the ball $B(0,R/4)$ as
$$ B(0,R/4)= \bigcup_{k=1}^{\infty} B(0,2^{-k+1}R/4)\setminus B(0,2^{-k}R/4)=: \bigcup_{k=1}^{\infty} E_k.$$
We obtain the following estimate

\begin{align}\label{Besovnorm}
 \ib0 & \biggl(\displaystyle\int_{B_s}  \dfrac{|\tau_hV_p(Du_j)|^2}{|h|^{2 \lambda}} dx \biggr)^{\frac{\sigma}{2}} \dfrac{d h}{|h|^n}\notag\\
  \leq 
& C   \biggl( \displaystyle\int_{B_{R}}(1+|D u_j|)^{p}dx \biggr)^{\frac{\delta (2q-p-2)\sigma}{2p}}\biggl( \displaystyle\int_{B_{t'}}(1+|D u_j|)^{\frac{np}{n-2 \lambda}}dx \biggr)^{\frac{(1-\delta )(n-2 \lambda)(2q-p-2)\sigma}{2np}} \notag	\\
& \cdot \ib0 \biggl( \displaystyle\int_{B_{R/2}}\dfrac{|\tau_h D \psi|^{2q-p}}{|h|^{\lambda(2q-p)}}dx \biggr)^{\frac{\sigma}{2q-p}} \dfrac{d h}{|h|^n}\notag\\
&+\dfrac{C}{(t-s)^\sigma}  \biggl( \displaystyle\int_{B_{R}}(1+|D u_j|)^{p}dx \biggr)^{\frac{\delta(2q-p)\sigma}{2p}}  \biggl( \displaystyle\int_{B_{t'}}(1+|D u_j|)^{\frac{np}{n-2 \lambda}}dx \biggr)^{\frac{(1-\delta)(n-2 \lambda)(2q-p)\sigma}{2np}} \notag\\
&\cdot \ib0 |h|^{(1-\lambda)\sigma} \dfrac{dh}{|h|^n} \notag\\
&+ C \biggl( \displaystyle\int_{B_{R}}(1+|D u_j|)^{p}dx \biggr)^{\frac{\delta(2q-p)\sigma}{2p}} \biggl( \displaystyle\int_{B_{t}}(1+|D u_j|)^{\frac{np}{n-2 \lambda}}dx \biggr)^{\frac{(1-\delta)(n-2 \lambda)(2q-p)\sigma}{2np}} \notag\\
& \cdot \displaystyle\sum_{k=1}^{\infty}\ibk |h|^{(\alpha-\lambda)\sigma} \biggl( \displaystyle\int_{B_{R/2}}(g_k(x+h)+g_k(x))^r dx \biggr)^{\frac{\sigma}{r}} \dfrac{dh}{|h|^n} \notag\\
&+ C \displaystyle\sum_{k=1}^{\infty}\ibk |h|^{(\alpha-\lambda)\frac{\sigma}{2}} \left( \displaystyle\int_{B_{R/2}} (g_k(x+h)+g_k(x))^r  dx \right)^\frac{\sigma}{2r} \left( \displaystyle\int_{B_{R/2}} \dfrac{|\tau_h D\psi|^{2q-p}}{|h|^{\lambda (2q-p)}} dx \right)^\frac{\sigma}{2(2q-p)} \dfrac{dh}{|h|^n} \notag\\
&\cdot \biggl( \displaystyle\int_{B_{R}}(1+|D u_j|)^{p}dx \biggr)^{\frac{\delta(q-1)\sigma}{2p}} \biggl( \displaystyle\int_{B_{t}}(1+|D u_j|)^{\frac{np}{n-2 \lambda}}dx \biggr)^{\frac{(1-\delta)(n-2 \lambda)(q-1)\sigma}{2np}}\notag\\
&+\frac{C}{(t-s)^{\sigma /2}} \displaystyle\sum_{k=1}^{\infty}\ibk |h|^{(\alpha+1-2 \lambda)\frac{\sigma}{2}} \left(\displaystyle\int_{B_{R/2}} (g_k(x+h)+g_k(x))^r dx \right)^\frac{\sigma}{2r} \dfrac{dh}{|h|^n}\notag\\
& \cdot \biggl\{ \biggl( \displaystyle\int_{B_{R}}(1+|D u_j|)^{p}dx \biggr)^{\frac{\delta(q-1)\sigma}{2p}}  \biggl( \displaystyle\int_{B_{t}}(1+|D u_j|)^{\frac{np}{n-2 \lambda}}dx \biggr)^{\frac{(1-\delta)(n-2 \lambda)(q-1)\sigma}{2np}}  \notag\\
  &+ \biggl( \displaystyle\int_{B_{R}}(1+|D u_j|)^{p}dx \biggr)^{\frac{\delta q\sigma}{2p}} \biggl( \displaystyle\int_{B_{t'}}(1+|D u_j|)^{\frac{np}{n-2 \lambda}}dx \biggr)^{\frac{(1-\delta)(n-2 \lambda)q\sigma}{2np}} \biggr\}.
\end{align}
Note that, since $\lambda \leq \gamma$, the integral 
$$J_1:= \ib0 \biggl( \displaystyle\int_{B_{R/2}}\dfrac{|\tau_h D \psi|^{2q-p}}{|h|^{\lambda(2q-p)}}dx \biggr)^{\frac{\sigma}{2q-p}} \dfrac{d h}{|h|^n}  $$
is controlled by the norm in the Besov space $B^{\gamma}_{2q-p,\sigma}$ on $B_{R/2}$ of the gradient of the obstacle which is finite by assumptions. The integral
$$ J_2 := \ib0 |h|^{(1-\lambda)\sigma} \dfrac{dh}{|h|^n}  $$ 
can be calculated in polar coordinates as follows
$$J_2 = C(n) \displaystyle\int_0^{R/4} \varrho^{(1-\lambda)\sigma-1} d \varrho = C(n,\alpha,\gamma,\sigma,R),$$
since $\lambda \in (0,1)$.
\\Now, we take care of the integral
$$J_3:= \displaystyle\sum_{k=1}^{\infty}\ibk |h|^{(\alpha-\lambda)\sigma} \biggl( \displaystyle\int_{B_{R/2}}(g_k(x+h)+g_k(x))^r dx \biggr)^{\frac{\sigma}{r}} \dfrac{dh}{|h|^n} .$$
Recalling that $|h| \leq 1$ and $\alpha \geq \lambda$, we have
$$J_3 \leq\displaystyle\sum_{k=1}^{\infty}\ibk  \biggl( \displaystyle\int_{B_{R/2}}(g_k(x+h)+g_k(x))^r dx \biggr)^{\frac{\sigma}{r}} \dfrac{dh}{|h|^n} .$$
We write the right hand sinde of the previous estimate in polar coordinates, so $h \in E_k$ if, and only if, $H= r \xi$ for some $2^{-k+1}R/4 \leq m < 2^{-k}R/4$ and some $\xi$ in the unit sphere $\mathbb{S}^{n-1}$ on $\R^n$. We denote by $d S(\xi)$ the surface measure on $\mathbb{S}^{n-1}$. We infer
\begin{align*}
J_3 \leq &  \displaystyle\sum_{k=1}^{\infty} \displaystyle\int_{m_{k-1}}^{m_k} \displaystyle\int_{\mathbb{S}^{n-1}}  \biggl( \displaystyle\int_{B_{R/2}}(g_k(x+h)+g_k(x))^r dx \biggr)^{\frac{\sigma}{r}} dS(\xi)\dfrac{dm}{m} \notag\\
= & \displaystyle\sum_{k=1}^{\infty} \displaystyle\int_{m_{k-1}}^{m_k} \displaystyle\int_{\mathbb{S}^{n-1}}  \Vert (\tau_{m \xi}g_k+g_k) \Vert_{L^{r}(B_{R/2})}^{\sigma} dS(\xi)\dfrac{dm}{m},
\end{align*}
where we set $m_k=2^{-k}\frac{R}{4}$.  We note that for each $\xi \in \mathbb{S}^{n-1}$ and $m_{k-1}\leq m \leq m_k$
\begin{align*}
\Vert (\tau_{m \xi}g_k+g_k) \Vert_{L^{r}(B_{R/2})}  \leq & \Vert g_k \Vert_{L^{r}(B_{R/2}-m_k \xi)} + \Vert g_k \Vert_{L^{r}(B_{R/2})} \notag\\
\leq & 2  \Vert g_k \Vert_{L^{r}(B_{R/2+R/4})},
\end{align*}
hence
$$J_3 \leq C(n) \Vert \{g_k \}_k \Vert_{l^\sigma(L^{r}(B_{R}))}^{\sigma},$$
which is finite by assumption (F6).
\\Recalling that $|h|\leq 1$, $\alpha \geq \lambda$ and using the Young's inequality with exponent $2$, we deduce the following estimate
\begin{align*}
& \displaystyle\sum_{k=1}^{\infty}\ibk |h|^{(\alpha-\lambda)\frac{\sigma}{2}} \left( \displaystyle\int_{B_{R/2}} (g_k(x+h)+g_k(x))^r  dx \right)^\frac{\sigma}{2r} \left( \displaystyle\int_{B_{R/2}} \dfrac{|\tau_h D\psi|^{2q-p}}{|h|^{\lambda (2q-p)}} dx \right)^\frac{\sigma}{2(2q-p)} \dfrac{dh}{|h|^n} \\
\leq & C \displaystyle\sum_{k=1}^{\infty}\ibk  \left( \displaystyle\int_{B_{R/2}} (g_k(x+h)+g_k(x))^r  dx \right)^\frac{\sigma}{r} \dfrac{dh}{|h|^n}+ C\ib0 \left( \displaystyle\int_{B_{R/2}} \dfrac{|\tau_h D\psi|^{2q-p}}{|h|^{\lambda (2q-p)}} dx \right)^\frac{\sigma}{2q-p} \dfrac{dh}{|h|^n} 
\end{align*} 
where the two integrals in the right hand side can be estimated as the integrals $J_1$ and $J_3$.
\\Similarly, we obtain
\begin{align*}
& \displaystyle\sum_{k=1}^{\infty}\ibk |h|^{(\alpha+1-2 \lambda)\frac{\sigma}{2}} \left(\displaystyle\int_{B_{R/2}} (g_k(x+h)+g_k(x))^r dx \right)^\frac{\sigma}{2r} \dfrac{dh}{|h|^n}\\
\leq &\ib0 |h|^{(\alpha+1-2 \lambda)\sigma} \dfrac{dh}{|h|^n} + \displaystyle\sum_{k=1}^{\infty}\ibk \left(\displaystyle\int_{B_{R/2}} (g_k(x+h)+g_k(x))^r dx \right)^\frac{\sigma}{r} \dfrac{dh}{|h|^n}.
\end{align*}
The latter term can be estimated as the integral $J_3$; the first integral can be calculated in polar coordinates as follows
$$J_2 = C(n) \displaystyle\int_0^{R/4} \varrho^{(\alpha+1-2 \lambda)\sigma-1} d \varrho = C(n,\alpha,\gamma,\sigma,R),$$
since $0 < \lambda \leq \alpha < 1$.

Estimate \eqref{Besovnorm} can be written in the following way
\begin{align}\label{Besovnorm1}
 \ib0 & \biggl(\displaystyle\int_{B_s}  \dfrac{|\tau_hV_p(Du_j)|^2}{|h|^{2 \lambda}} dx \biggr)^{\frac{\sigma}{2}} \dfrac{d h}{|h|^n}\notag\\
  \leq 
& \tilde{C}  \biggl( \displaystyle\int_{B_{R}}(1+|D u_j|)^{p}dx \biggr)^{\frac{\delta(2q-p-2)\sigma}{2p}}  \biggl( \displaystyle\int_{B_{t'}}(1+|D u_j|)^{\frac{np}{n-2 \lambda}}dx \biggr)^{\frac{(1-\delta )(n-2 \lambda)(2q-p-2)\sigma}{2np}}
\notag\\
&+\dfrac{\tilde{C}}{(t-s)^{\sigma }} \biggl( \displaystyle\int_{B_{R}}(1+|D u_j|)^{p}dx \biggr)^{\frac{\delta(2q-p)\sigma}{2p}} \biggl( \displaystyle\int_{B_{t'}}(1+|D u_j|)^{\frac{np}{n-2 \lambda}}dx \biggr)^{\frac{(1-\delta)(n-2 \lambda)(2q-p)\sigma}{2np}}   \notag\\
&+ \tilde{C} \biggl( \displaystyle\int_{B_{R}}(1+|D u_j|)^{p}dx \biggr)^{\frac{\delta(2q-p)\sigma}{2p}} \biggl( \displaystyle\int_{B_{t}}(1+|D u_j|)^{\frac{np}{n-2 \lambda}}dx \biggr)^{\frac{(1-\delta)(n-2 \lambda)(2q-p)\sigma}{2np}} \notag\\
&+ \tilde{C}  \biggl( \displaystyle\int_{B_{R}}(1+|D u_j|)^{p}dx \biggr)^{\frac{\delta(q-1)\sigma}{2p}} \biggl( \displaystyle\int_{B_{t}}(1+|D u_j|)^{\frac{np}{n-2 \lambda}}dx \biggr)^{\frac{(1-\delta)(n-2 \lambda)(q-1)\sigma}{2np}}\notag\\
&+\dfrac{\tilde{C}}{(t-s)^{\sigma /2}} \biggl( \displaystyle\int_{B_{R}}(1+|D u_j|)^{p}dx \biggr)^{\frac{\delta(q-1)\sigma}{2p}}  \biggl( \displaystyle\int_{B_{t}}(1+|D u_j|)^{\frac{np}{n-2 \lambda}}dx \biggr)^{\frac{(1-\delta)(n-2 \lambda)(q-1)\sigma}{2np}} \notag\\
  &+ \dfrac{\tilde{C}}{(t-s)^{\sigma /2}} \biggl( \displaystyle\int_{B_{R}}(1+|D u_j|)^{p}dx \biggr)^{\frac{\delta q\sigma}{2p}} \biggl( \displaystyle\int_{B_{t'}}(1+|D u_j|)^{\frac{np}{n-2 \lambda}}dx \biggr)^{\frac{(1-\delta)(n-2 \lambda)q\sigma}{2np}} \notag\\
  =& H_1+H_2+H_3+H_4+H_5+H_6,
\end{align}
for a constant $\tilde{C}:=	\tilde{C}(L,p,q,r,n,\sigma,\alpha,\gamma,R,\Vert D\psi \Vert_{B^{\gamma}_{2q-p,\sigma}(B_{R/2})}, \Vert \{g_k \}_k \Vert_{l^\sigma(L^{r}(B_{R}))})$.
\\We proceed estimating the various pieces arising up from \eqref{Besovnorm1}.

By assumption \eqref{gap2}, we have that
$$\dfrac{(1- \delta)(2q-p-2)}{p} <1 \quad \text{and} \quad \dfrac{(1- \delta)(2q-p)}{p} <1. $$
Thus, using the Young's inequality, we deduce the following estimate 
\begin{align}\label{H1}
H_1 +H_2+H_3\leq &\tilde{C}_{\theta} \biggl( \displaystyle\int_{B_{R}}(1+|D u_j|)^{p}dx \biggr)^{\sigma'}+ \tilde{C}_{\theta} \biggl( \displaystyle\int_{B_{R}}(1+|D u_j|)^{p}dx \biggr)^{\sigma''} \notag\\
& + \dfrac{\tilde{C}_\theta}{(t-s)^{\frac{\sigma p}{p-(1-\delta)(2q-p)}}} \biggl( \displaystyle\int_{B_{R}}(1+|D u_j|)^{p}dx \biggr)^{\sigma''} \notag\\
&+ 2\theta \biggl( \displaystyle\int_{B_{t'}}(1+|D u_j|)^{\frac{np}{n-2 \lambda}}dx \biggr)^{\frac{(n-2 \lambda)\sigma}{2n}} \notag\\
&+ \theta \biggl( \displaystyle\int_{B_{t}}(1+|D u_j|)^{\frac{np}{n-2 \lambda}}dx \biggr)^{\frac{(n-2 \lambda)\sigma}{2n}},
\end{align}
for $0< \theta < 1$, where we set $\sigma' = \frac{\delta (2q-p-2) \sigma}{2 [p-(1-\delta)(2q-p-2)]}$, $\sigma''= \frac{\delta (2q-p) \sigma}{2 [p-(1-\delta)(2q-p)]} $.
\\According to the second inequality of \eqref{ineq.1} with $\beta $ replaced by $\lambda$, the use of Young's inequality yields
\begin{align}\label{H2}
H_4 +H_5 \leq &\tilde{C}_{\theta} \biggl( \displaystyle\int_{B_{R}}(1+|D u_j|)^{p}dx \biggr)^{\sigma'''} + \dfrac{\tilde{C}_\theta}{(t-s)^{\frac{\sigma p}{2 [p-(1- \delta)(q-1)]} }} \biggl( \displaystyle\int_{B_{R}}(1+|D u_j|)^{p}dx \biggr)^{\sigma'''} \notag\\
& + 2\theta \biggl( \displaystyle\int_{B_{t}}(1+|D u_j|)^{\frac{np}{n-2 \lambda}}dx \biggr)^{\frac{(n-2 \lambda)\sigma}{2n}} ,
\end{align}
where we set $\sigma'' = \frac{p (q-1)\sigma}{2[p-(1-\delta)(q-1)]}$.
\\Similarly, recalling the third inequality of \eqref{ineq.1} with $\beta $ replaced by $\lambda$, we deduce that
\begin{align}\label{H3}
H_6 \leq & \dfrac{\tilde{C}_\theta}{(t-s)^{\frac{\sigma p}{2[p-(1-\delta)q]}}} \biggl( \displaystyle\int_{B_{R}}(1+|D u_j|)^{p}dx \biggr)^{\tilde{\sigma}}  + \theta \biggl( \displaystyle\int_{B_{t'}}(1+|D u_j|)^{\frac{np}{n-2 \lambda}}dx \biggr)^{\frac{(n-2 \lambda)\sigma}{2n}} ,
\end{align}
where we set $\tilde{\sigma} = \frac{p q \sigma}{2[p-(1-\delta)q]}$.

For a better readability we now define
\begin{align*}
A:= & \tilde{C}_{\theta} \biggl( \displaystyle\int_{B_{R}}(1+|D u_j|)^{p}dx \biggr)^{\sigma'}+ \tilde{C}_{\theta} \biggl( \displaystyle\int_{B_{R}}(1+|D u_j|)^{p}dx \biggr)^{\sigma''}, \notag\\
&+ \tilde{C}_{\theta} \biggl( \displaystyle\int_{B_{R}}(1+|D u_j|)^{p}dx \biggr)^{\sigma'''} , \notag\\
B_1:= & \tilde{C}_\theta \biggl( \displaystyle\int_{B_{R}}(1+|D u_j|)^{p}dx \biggr)^{\sigma''} ,\notag\\
B_2:= &\tilde{C}_\theta \biggl( \displaystyle\int_{B_{R}}(1+|D u_j|)^{p}dx \biggr)^{\sigma'''} ,\notag\\
B_3:= & \tilde{C}_\theta \biggl( \displaystyle\int_{B_{R}}(1+|D u_j|)^{p}dx \biggr)^{\tilde{\sigma}}, \notag\\
\pi_1:= & \dfrac{\sigma p}{p-(1-\delta)(2q-p)} ,\notag\\
\pi_2 := & \dfrac{\sigma p}{2 [p-(1- \delta)(q-1)]}, \notag\\
\pi_3 := & \dfrac{\sigma p}{2[p-(1-\delta)q]}, \notag\\
\end{align*}
so that, inserting estimates \eqref{H1}, \eqref{H2} and \eqref{H3} in \eqref{Besovnorm1}, we obtain
\begin{align}
\ib0 & \biggl(\displaystyle\int_{B_s}  \dfrac{|\tau_hV_p(Du_j)|^2}{|h|^{2 \lambda}} dx \biggr)^{\frac{\sigma}{2}} \dfrac{d h}{|h|^n}\notag\\
  \leq & 3\theta \biggl( \displaystyle\int_{B_{t}}(1+|D u_j|)^{\frac{np}{n-2 \lambda}}dx \biggr)^{\frac{(n-2 \lambda)\sigma}{2n}} \notag\\
  &+ 3\theta \biggl( \displaystyle\int_{B_{t'}}(1+|D u_j|)^{\frac{np}{n-2 \lambda}}dx \biggr)^{\frac{(n-2 \lambda)\sigma}{2n}} \notag\\
  &+ A+ \dfrac{B_1}{(t-s)^{\pi_1}} + \dfrac{B_2}{(t-s)^{\pi_2}}+ \dfrac{B_3}{(t-s)^{\pi_3 }} .
\end{align}
Lemma \ref{3.1} $(a)$ and inequality \eqref{Vp} imply
\begin{align}
\biggl( \displaystyle\int_{B_{s}} & |D u_j|^{\frac{np}{n-2 \lambda}}dx \biggr)^{\frac{(n-2 \lambda)\sigma}{2n}}  \notag\\
  \leq & 3\theta \biggl( \displaystyle\int_{B_{t}}(1+|D u_j|)^{\frac{np}{n-2 \lambda}}dx \biggr)^{\frac{(n-2 \lambda)\sigma}{2n}} \notag\\
  &+ 3\theta \biggl( \displaystyle\int_{B_{t'}}(1+|D u_j|)^{\frac{np}{n-2 \lambda}}dx \biggr)^{\frac{(n-2 \lambda)\sigma}{2n}} \notag\\
  &+  A+ \dfrac{B_1}{(t-s)^{\pi_1}} + \dfrac{B_2}{(t-s)^{\pi_2}}+ \dfrac{B_3}{(t-s)^{\pi_3 }} .
\end{align}
Arguing as in the proof of Theorem \ref{approximation}, we finally obtain
\begin{gather}
\biggl( \displaystyle\int_{B_{R/4}}  |Du_j|^{\frac{np}{n- 2 \lambda }} dx \biggr)^{\frac{(n- 2 \beta)\sigma}{2n}} \leq  \tilde{c}   \biggl\{  \ibR (1+ |Du_j|^p)dx+ \Vert D \psi \Vert_{B^{\gamma}_{2q-p, \sigma}(B_{R/2})} \biggl\}^{\kappa},
\end{gather}
which implies
\begin{gather}
\ib0  \biggl(\displaystyle\int_{B_{R/4}}  \dfrac{|\tau_hV_p(Du_j)|^2}{|h|^{2 \lambda}} dx \biggr)^{\frac{\sigma}{2}} \dfrac{d h}{|h|^n}dx \leq C  \biggl\{  \ibR (1+ |Du_j|^p)dx+ \Vert D \psi \Vert_{B^{\gamma}_{2q-p, \sigma}(B_{R/2})} \biggl\}^{\kappa},
\end{gather}
where $C := C(R,n,p,q,r, \sigma, \alpha,\gamma)$ and $\kappa:= \kappa (n,p,q,r,  \sigma, \alpha,\gamma)$. We observe that the constants $C$ and $\kappa$ are in particular independent of index $j$. Therefore, they are not an issue when passing to the limit in the approximating problem.

\medskip
\textbf{Acknowledgments}
The authors would like to thank Prof. Eleuteri and Prof. Passarelli di Napoli for suggesting the problem and for careful reading.
\medskip

\end{document}